\newtheorem{definition}{Definition}[section]
\let\olddefinition\definition
\renewcommand{\definition}{\olddefinition\normalfont}
\newtheorem{remark}{Remark}[section]
\let\oldremark\remark
\renewcommand{\remark}{\oldremark\normalfont}
\newtheorem{proposition}{Proposition}[section]
\newtheorem{theorem}{Theorem}[section]
\newtheorem{lemma}{Lemma}[section]
\newtheorem{sublemma}{Sublemma}[section]
\newtheorem{corollary}{Corollary}[section]
\begin{document}
\title{Semi-Adequate Closed Braids and Volume}
\author{Adam Giambrone\\ Alma College\\ giambroneaj@alma.edu}
\date{}
\maketitle

\begin{abstract}

In this paper, we show that the volumes for a family of A-adequate closed braids can be bounded above and below in terms of the twist number, the number of braid strings, and a quantity that can be read from the combinatorics of a given closed braid diagram. We also show that the volumes for many of these closed braids can be bounded in terms of a single stable coefficient of the colored Jones polynomial, thus showing that this collection of closed braids satisfies a Coarse Volume Conjecture. By expanding to a wider family of closed braids, we also obtain volume bounds in terms of the number of positive and negative twist regions in a given closed braid diagram. Furthermore, for a family of A-adequate closed $3$-braids, we show that the volumes can be bounded in terms of the parameter $s$ from the Schreier normal form of the $3$-braid. Finally we show that, for the same family of A-adequate closed 3-braids, the parameters $k$ and $s$ from the Schreier normal form can actually be read off of the original 3-braid word. 

\end{abstract}

\section{Introduction}

One of the current aims of knot theory is to strengthen the relationships among the hyperbolic volume of the link complement, the colored Jones polynomial, and data extracted from link diagrams. In a recent monograph, Futer, Kalfagianni, and Purcell (\cite{Guts}, or see \cite{Survey} for a survey of results) showed that, for sufficiently twisted negative braid closures and for certain Montesinos links, the volume of the link complement can be bounded above and below in terms of the twist number of an A-adequate link diagram. The results for Montesinos links were recently generalized by Purcell and Finlinson in \cite{Montesinos}. Similar results for alternating links were previously given by Lackenby in \cite{Lackenby}, with the lower bounds improved upon by Agol, Storm, and W. Thurston in \cite{AgolStorm} and the upper bounds improved upon by Agol and D. Thurston in the appendix of \cite{Lackenby} and more recently improved upon by Dasbach and Tsvietkova in \cite{Refined}. In a previous paper (\cite{Me}), the author showed that the volumes for a large family of A-adequate link complements can be bounded in terms of two diagrammatic quantities: the twist number and the number of certain alternating tangles in a given A-adequate diagram. 


%
%
%

In this paper, we show that the volumes for a family of A-adequate closed $n$-braids can be bounded above and below in terms of the twist number $t(D)$, the number $t^{+}(D)$ of positive twist regions, the number $t^{-}(D)$ of negative twist regions, the number $n$ of braid strings, and the number $m$ of special types of state circles (called \emph{non-essential wandering circles}) that arise from a given closed braid diagram. Let $v_{8}=3.6638\ldots$ denote the volume of a regular ideal octahedron and let $v_{3}=1.0149\ldots$ denote the volume of a regular ideal tetrahedron. The main results of this paper, where the words ``certain'' and ``more general'' will be made precise later, are stated below. 

\begin{theorem} \label{introthm} For $D(K)$ a certain A-adequate closed $n$-braid diagram, the complement of $K$ satisfies the volume bounds
$$\frac{v_{8}}{2}\cdot(t(D)-2(n+m-2)) \leq v_{8}\cdot(t^{-}(D)-(n+m-2)) \leq \mathrm{vol}(S^{3}\backslash K) < 10v_{3}\cdot(t(D)-1).$$
For $D(K)$ a more general A-adequate closed $n$-braid diagram, the complement of $K$ satisfies the volume bounds
$$v_{8}\cdot(t^{-}(D)-t^{+}(D)-(n+m-2)) \leq \mathrm{vol}(S^{3}\backslash K) < 10v_{3}\cdot(t(D)-1) = 10v_{3}\cdot(t^{-}(D)+t^{+}(D)-1).$$
\end{theorem}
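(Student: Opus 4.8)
The plan is to prove the upper bound and the sharper (middle) lower bound by separate arguments, and then to read the remaining inequality off by arithmetic. The upper bound $\mathrm{vol}(S^{3}\backslash K)<10v_{3}\cdot(t(D)-1)$ is not special to braids at all: it is the diagrammatic volume estimate of Agol and D.~Thurston from the appendix to \cite{Lackenby}, valid for any hyperbolic link admitting a prime, twist-reduced diagram with $t\geq 2$ twist regions. The definitions of ``certain'' and ``more general'' A-adequate closed braid will be arranged so that $D$ is prime, twist-reduced and satisfies $t(D)\geq 2$, so this applies verbatim with $t=t(D)$; the final rewriting is just the identity $t(D)=t^{+}(D)+t^{-}(D)$.

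For the lower bounds I would run the ``guts'' machinery of Futer--Kalfagianni--Purcell \cite{Guts}. The diagrammatic hypotheses are designed to force $D$ to be an A-adequate diagram, so the all-$A$ state surface $S_{A}$ spanned by $K$ is essential in $S^{3}\backslash K$; cutting $S^{3}\backslash K$ along $S_{A}$ and decomposing the resulting pared manifold along its characteristic $I$-bundle into guts and window pieces, the central geometric inequality of \cite{Guts} gives a bound of the form
$$\mathrm{vol}(S^{3}\backslash K)\ \geq\ v_{8}\cdot\bigl(\chi_{-}(\mathbb{G}_{A}')-\|E_{c}\|\bigr),$$
where $\mathbb{G}_{A}'$ is the reduced all-$A$ state graph and $E_{c}$ is its set of complex $2$-edge loops. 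Everything therefore comes down to evaluating $\chi_{-}(\mathbb{G}_{A}')$ and bounding $\|E_{c}\|$ directly from the closed braid diagram --- the same strategy used in \cite{Me}, now specialized to closed braids.

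The combinatorial heart is the analysis of the all-$A$ state of a closed $n$-braid, twist region by twist region. With the standard conventions, a negative twist region with $c$ crossings all-$A$-smoothes to a chain of $c-1$ bigon state circles strung along a path of $c$ edges of $\mathbb{G}_{A}'$ (no multi-edge, hence no $I$-bundle contribution and no complex loop), whereas a positive twist region all-$A$-smoothes to $c-1$ parallel bigon \emph{regions}, i.e.\ to a single edge of $\mathbb{G}_{A}'$ carrying a window piece. The remaining state circles are the ``big'' circles that wind along the braid; $m$ of these are by definition the non-essential wandering circles, and the rest, together with the braid-closure structure, assemble into a spine of Euler characteristic $n+m-2$. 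Counting state circles against crossings, discarding the window pieces, and charging at most one unit of $\|E_{c}\|$ to each positive twist region, I expect to obtain $\chi_{-}(\mathbb{G}_{A}')-\|E_{c}\|\geq t^{-}(D)-t^{+}(D)-(n+m-2)$, which is the ``more general'' lower bound. For the ``certain'' family the extra hypotheses render the positive twist regions harmless --- so that $\|E_{c}\|=0$ and the positive regions do not lower $\chi_{-}$ --- upgrading this to $\chi_{-}(\mathbb{G}_{A}')\geq t^{-}(D)-(n+m-2)$ and hence the middle inequality. The remaining inequality $\tfrac{v_{8}}{2}(t(D)-2(n+m-2))\leq v_{8}(t^{-}(D)-(n+m-2))$ is purely arithmetic, being equivalent to $t^{+}(D)\leq t^{-}(D)$, which holds for the ``certain'' family by construction.

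The two invocations of \cite{Guts} and \cite{Lackenby} are routine; the real obstacle is the combinatorial lemma underlying the third paragraph --- proving A-adequacy from the stated diagrammatic hypotheses, defining and counting the non-essential wandering circles, evaluating $\chi(\mathbb{G}_{A}')$ in closed form, and, the delicate point, bounding $\|E_{c}\|$ by $t^{+}(D)$. The worst cases should come from pairs of adjacent twist regions of opposite sign, since it is exactly the collapse of a positive twist region's parallel edges, sitting next to another collapsed edge, that can produce a complex $2$-edge loop; controlling that interaction carefully is where I expect the casework to concentrate.
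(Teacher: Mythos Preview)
Your high-level strategy matches the paper's: invoke the Agol--D.~Thurston upper bound and the guts inequality of \cite{Guts} for the lower bound, then estimate $-\chi(\mathbb{G}_A')$ combinatorially from the all-$A$ state. However, you misidentify the source of the $-t^{+}(D)$ term in the ``more general'' lower bound, and this reflects a real gap in your picture of the all-$A$ state.

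The paper's diagrammatic hypotheses (the Main Lemma) are arranged so that $D$ satisfies the \emph{two-edge loop condition} (TELC): whenever two all-$A$ circles share a pair of segments, those segments come from a single short twist region. This forces $\|E_c\|=0$ in \emph{both} the ``certain'' and the ``more general'' families, so the guts inequality reduces uniformly to $\mathrm{vol}\geq -v_8\cdot\chi(\mathbb{G}_A')$. The extra $-t^{+}(D)$ therefore does not come from complex $2$-edge loops at all; it comes from a class of state circles your sketch omits. When a positive syllable $\sigma_i^{r}$ is flanked, as the hypotheses require, by negative syllables in the adjacent generators, the short resolution is enclosed by one or two new \emph{medium inner circles}: one if $i\in\{1,n-1\}$, two if $2\leq i\leq n-2$. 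These are ``other circles'' in the formula $-\chi(\mathbb{G}_A')=t(D)-\#\{\text{other circles}\}$ (Lemma~3.4 of \cite{Me}), so they subtract exactly $t^{+}(D)$ in the ``certain'' case (positive syllables only in $\sigma_1$ or $\sigma_{n-1}$) and up to $2t^{+}(D)$ in the ``more general'' case. That discrepancy, not $\|E_c\|$, is what distinguishes the two lower bounds.

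So the casework you anticipate around ``pairs of adjacent twist regions of opposite sign'' does occur, but its purpose is to establish TELC and A-adequacy (and to exhibit the medium inner circles), not to bound $\|E_c\|$ by $t^{+}(D)$. Your combinatorial inventory --- small inner circles from negative regions plus ``big'' wandering circles --- needs to include the medium inner circles before the Euler-characteristic count will balance; without them your claimed inequality $\chi_{-}(\mathbb{G}_A')\geq t^{-}(D)-(n+m-2)$ for the ``more general'' family is simply false.
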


By restricting to a family of A-adequate closed $3$-braids, we show that the volumes can also be bounded in terms of the parameter $s$ from the Schreier normal form of the $3$-braid. It should be noted that the lower bound provided in this paper, which relies on the more recent machinery of \cite{Guts}, is often an improvement over the one given in \cite{Cusp}.  

\begin{theorem}\label{sthm}
For $D(K)$ a certain A-adequate closed $3$-braid diagram, the complement of $K$ satisfies the volume bounds
$$v_{8}\cdot(s-1) \leq \mathrm{vol}(S^{3}\backslash K) < 4v_{8}\cdot s.$$
\end{theorem}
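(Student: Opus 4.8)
The plan is to deduce this from Theorem~\ref{introthm} by reading the relevant diagrammatic quantities off the Schreier normal form. Recall that this normal form records a power $d$ of the central full twist $h=(\sigma_1\sigma_2)^{3}$ together with an alternating product of $s$ syllable pairs (each pair a block $\sigma_1^{\pm a_i}$ followed by a block $\sigma_2^{\mp b_i}$, with $a_i,b_i\ge 1$); the parameter $s$ in the statement is exactly this number of syllable pairs, and conjugacy --- which does not change the closure --- lets us assume $\beta$ has this form. First I would identify the ``certain A-adequate closed $3$-braid diagrams'' of the theorem with those closures whose normal form satisfies the hypotheses needed to apply the ``certain''-diagram case of Theorem~\ref{introthm}: that the closed-braid diagram $D$ be A-adequate, that every twist region be sufficiently long (the threshold coming from the earlier sections), and that $t^{+}(D)\le t^{-}(D)$. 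I would then record the constraint this last condition places on $d$.

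The substantive step is the combinatorial accounting for such a $D$. Here $n=3$; the $s$ syllables of one sign (together with the crossings of $h^{d}$, when $d$ has the appropriate sign) supply all the negative twist regions, so $t^{-}(D)\ge s$, with equality when $d=0$, and with the constraint on $d$ one obtains $t(D)=t^{+}(D)+t^{-}(D)\le 4s$. The delicate point --- which I expect to be the main obstacle --- is to show that the all-$A$ state of a closed $3$-braid of this shape contains no \emph{non-essential wandering circles}, i.e.\ $m=0$. I would prove this by enumerating the few types of state circles that can occur with only three braid strings and invoking the adequacy analysis developed in the earlier sections; making the dependence of $t^{\pm}(D)$ on $s$ (and $d$) completely precise belongs to this step as well.

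Granting the accounting, the lower bound is immediate from the ``certain''-diagram case of Theorem~\ref{introthm}: $\mathrm{vol}(S^{3}\backslash K)\ge v_{8}\cdot(t^{-}(D)-(n+m-2))\ge v_{8}\cdot(s-(3+0-2))=v_{8}(s-1)$. As remarked after the statement, this lower bound rests on the \cite{Guts} machinery and improves on the estimate of \cite{Cusp}.

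For the upper bound I would not invoke the bound $10v_{3}\cdot(t(D)-1)$ of Theorem~\ref{introthm} directly, since it is slightly too weak once $t(D)$ is on the order of $2s$; instead I would pass to the fully augmented link $J$ obtained from $D$ by adding a crossing circle at each twist region and deleting all full twists. Because $K$ is recovered from $J$ by non-trivial Dehn fillings of the crossing circles, $\mathrm{vol}(S^{3}\backslash K)<\mathrm{vol}(S^{3}\backslash J)$; and the right-angled ideal polyhedral decomposition of a flat augmented link bounds $\mathrm{vol}(S^{3}\backslash J)$ above by $v_{8}$ times its number of crossing circles, that is, by $v_{8}\cdot t(D)$. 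Combined with $t(D)\le 4s$ from the accounting step, this gives $\mathrm{vol}(S^{3}\backslash K)<4v_{8}\,s$. (In the sub-case where $t(D)$ is small relative to $s$, the upper bound of Theorem~\ref{introthm} already suffices, so the augmented-link detour can be omitted there.)
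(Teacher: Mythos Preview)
Your plan conflates the link with its diagram. Conjugating $\beta$ to its Schreier normal form leaves the closure unchanged as a link, but it changes the diagram $D$, and both the hypotheses feeding into Theorem~\ref{introthm} (A-adequacy, the TELC, the requirement $r_{i}\le -3$ on negative exponents from Lemma~\ref{setup}) and the quantities $t(D)$, $t^{\pm}(D)$, $m$ are diagram-dependent. In particular, the closure of the normal-form word $C^{k}\sigma_{1}^{-p_{1}}\sigma_{2}^{q_{1}}\cdots\sigma_{1}^{-p_{s}}\sigma_{2}^{q_{s}}$ generally fails these hypotheses whenever $k\neq 0$ (the $C^{k}$ block contributes single-crossing syllables) or whenever some $p_{i}$ or $q_{i}$ equals $1$ or $2$, so your proposed identification of the ``certain'' class is not the paper's and is strictly smaller. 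The paper instead keeps the \emph{original} braid $\beta$ satisfying Lemma~\ref{setup}, and the substantive step is Theorem~\ref{normformthm}: by explicitly running the Schreier algorithm on this class of words one proves the \emph{equality} $s=t^{-}(D)$, not merely $t^{-}(D)\ge s$. The $n=3$ case of Theorem~\ref{mytheorem} already yields $-\chi(\mathbb{G}_{A}')=t^{-}(D)-1$ directly (via Lemma~\ref{mylemma2}, which shows there is exactly one wandering-or-nonwandering circle, automatically essential when $n=3$), so no separate verification of $m=0$ is needed; the lower bound $v_{8}(s-1)$ is then immediate.

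Your upper-bound argument also has a gap: the inequality $\mathrm{vol}(S^{3}\backslash J)\le v_{8}\cdot t(D)$ for a generic flat fully augmented link is not a known bound---the right-angled polyhedral decomposition does not produce one octahedron per crossing circle in general, and the Agol--D.~Thurston estimate is $10v_{3}(t(D)-1)$, which is too weak here. The paper does not re-derive the upper bound at all: it simply quotes Proposition~\ref{svolbound} (Theorem~5.6 of \cite{Cusp}), whose proof uses an augmentation tailored specifically to $3$-braids and the parameter $s$, not the generic augmentation by $t(D)$ crossing circles that you describe.
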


In addition to providing diagrammatic volume bounds, we also show that, for the same family of A-adequate closed 3-braids, the parameters $k$ and $s$ from the Schreier normal form can actually be read off of the original 3-braid diagram. 

The volumes for many families of link complements have also been expressed in terms of coefficients of the colored Jones polynomial (\cite{Volumish}, \cite{Refined}, \cite{Guts}, \cite{Filling}, \cite{Symmetric}, \cite{Cusp}, \cite{Stoimenow}). Denote the \emph{$j^{th}$ colored Jones polynomial} of a link $K$ by
$$J_{K}^{j}(t)=\alpha_{j}t^{m_{j}}+\beta_{j}t^{m_{j}-1}+\cdots+\beta_{j}'t^{r_{j}+1}+\alpha_{j}'t^{r_{j}},$$
\noindent where $j \in \mathbb{N}$ and where the degree of each monomial summand decreases from left to right. We will show that, for fixed $n$, the volumes for many of the closed $n$-braids considered in this paper can be bounded in terms of the stable penultimate coefficient $\beta_{K}':=\beta_{j}'$ (where $j \geq 2$) of the colored Jones polynomial. A result of this nature shows that the given collection of closed $n$-braids satisfies a Coarse Volume Conjecture (\cite{Guts}, Section 10.4). This result, which is stated below, can be viewed as a corollary of Theorem~\ref{introthm}.  

\begin{corollary}\label{newcor} 
For $D(K)$ a certain A-adequate closed $n$-braid diagram, the complement of $K$ satisfies the volume bounds
$$v_{8}\cdot(\left|\beta_{K}'\right|-1) \leq \mathrm{vol}(S^{3}\backslash K) < 20v_{3}\cdot\left(\left|\beta_{K}'\right|+n+m-\dfrac{7}{2}\right).$$
\end{corollary}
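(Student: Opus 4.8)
The plan is to derive Corollary~\ref{newcor} from Theorem~\ref{introthm} by relating the stable penultimate coefficient $\beta_K'$ of the colored Jones polynomial to the diagrammatic quantities $t^-(D)$, $t(D)$, $n$, and $m$ appearing in the first set of volume bounds. The key input should be a formula, for A-adequate diagrams of the type under consideration, expressing $|\beta_K'|$ in terms of the combinatorics of the all-$A$ state graph (or reduced state graph) $\mathbb{G}_A'$. Specifically, for A-adequate links one has $|\beta_K'| = 1 - \chi(\mathbb{G}_A')$ (this is the standard relationship between the second stable coefficient and the first Betti number of the reduced $A$-state graph, going back to Dasbach--Lin and used throughout \cite{Guts}), and for the closed $n$-braid diagrams in this paper the Euler characteristic of $\mathbb{G}_A'$ should be computable in terms of the number of twist regions and the number of state circles. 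So first I would recall/establish the identity $|\beta_K'| = 1 - \chi(\mathbb{G}_A')$ for this family, then compute $1 - \chi(\mathbb{G}_A')$ explicitly in terms of $t^-(D)$ (or $t(D)$), $n$, and $m$.

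The second step is the bookkeeping that converts the count of vertices and edges of $\mathbb{G}_A'$ into the braid-diagrammatic parameters. For a closed $n$-braid diagram of the relevant type, the all-$A$ state circles decompose into the $n$ ``long'' Seifert-type circles together with the $m$ non-essential wandering circles and one circle per negative twist region (the precise count is whatever was set up earlier in the paper when $t^-(D)$ and $m$ were defined), and the edges of the reduced graph are in bijection with the twist regions. Carrying this out, I expect to land on a clean linear relation of the shape $|\beta_K'| = t^-(D) - (n + m - 2) + c$ for a small explicit constant $c$, or equivalently something like $t^-(D) - (n+m-2) = |\beta_K'| - c$; matching this against the lower bound $v_8\cdot(t^-(D) - (n+m-2)) \le \mathrm{vol}(S^3\setminus K)$ from Theorem~\ref{introthm} immediately yields the lower bound $v_8\cdot(|\beta_K'| - 1) \le \mathrm{vol}(S^3\setminus K)$ once the constant works out so that the right shift is by $1$.

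For the upper bound I would feed the same relation into $\mathrm{vol}(S^3\setminus K) < 10 v_3 \cdot (t(D) - 1)$. Here I need to bound $t(D)$ above in terms of $|\beta_K'|$; since $t(D) = t^+(D) + t^-(D)$ and the ``certain'' hypothesis presumably forces $t^+(D)$ to be controlled (e.g., all-$A$-adequacy together with the structure of the diagram may force $t^+(D) = 0$, or $t^+(D)$ bounded by the number of wandering circles), I would substitute $t(D) \le t^-(D) + (\text{something}) \le (|\beta_K'| + n + m - c') + (\text{something})$ and collect constants. The factor of $2$ between $10 v_3$ and $20 v_3$ in the stated corollary strongly suggests that the argument uses $t(D) \le 2\big(t^-(D)\big)$ or $t(D) - 1 < 2(|\beta_K'| + n + m - 7/4)$ type estimate — i.e. the doubling comes from bounding the total twist number by twice the negative twist number (consistent with the inequality $\tfrac{v_8}{2}(t(D) - 2(n+m-2)) \le v_8(t^-(D) - (n+m-2))$ already visible in Theorem~\ref{introthm}, which says exactly $t(D) - 2(n+m-2) \le 2\big(t^-(D) - (n+m-2)\big)$). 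Plugging in and simplifying the constant $\tfrac72 = 2\cdot\tfrac74$ should produce the claimed $20 v_3\cdot(|\beta_K'| + n + m - \tfrac72)$.

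The main obstacle I anticipate is not any deep geometry — Theorem~\ref{introthm} does all the heavy lifting — but rather getting the additive constants exactly right in the translation between $\beta_K'$ and $t^-(D) - (n+m-2)$. This requires being careful about the precise definition of ``non-essential wandering circles,'' about which state circles are counted in $t^-(D)$ versus absorbed elsewhere, and about whether one uses the reduced versus unreduced $A$-state graph and whether the diagram is assumed $A$-adequate or adequate (so that the relevant colored-Jones coefficient really is stable and really does compute $1 - \chi(\mathbb{G}_A')$). A secondary point to verify is that the ``certain'' family in the corollary is at least as restrictive as the ``certain'' family in the first half of Theorem~\ref{introthm} and also meets whatever adequacy hypothesis makes the colored Jones coefficient formula valid, so that both the volume bounds and the coefficient identity may be invoked simultaneously.
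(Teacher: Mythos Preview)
Your approach is essentially the paper's: the proof of Corollary~\ref{newestsetup} invokes exactly the identity $|\beta_K'| = 1 - \chi(\mathbb{G}_A')$ (from \cite{HeadTail}), combines it with the underlying bound $-v_8\chi(\mathbb{G}_A') \le \mathrm{vol}(S^3\setminus K) < 10v_3(t(D)-1)$ of Theorem~\ref{Cor}, and uses the inequality $-\chi(\mathbb{G}_A') \ge \tfrac12\bigl(t(D)-2(n+m-2)\bigr)$ from Theorem~\ref{mytheorem} to convert the upper bound, exactly via the doubling you anticipated.

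One correction worth flagging: your plan for the \emph{lower} bound routes through an expected equality $|\beta_K'|-1 = t^-(D) - (n+m-2)$ and then invokes the lower bound of Theorem~\ref{introthm}. That equality holds only for $n=3$; for $n\ge 4$ the paper proves only the inequality $-\chi(\mathbb{G}_A') \ge t^-(D)-(n+m-2)$ (coming from $\#\{\text{essential wandering circles}\}+\#\{\text{nonwandering circles}\}\le n-2$), which means $|\beta_K'|-1 \ge t^-(D)-(n+m-2)$, the wrong direction to deduce $v_8(|\beta_K'|-1)\le\mathrm{vol}$ from the Theorem~\ref{introthm} lower bound. The paper sidesteps this entirely: since $|\beta_K'|-1 = -\chi(\mathbb{G}_A')$ \emph{exactly}, the lower bound $v_8(|\beta_K'|-1)\le\mathrm{vol}$ is immediate from Theorem~\ref{Cor} with no detour through $t^-(D)$. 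Your upper-bound bookkeeping, on the other hand, is correct as written: the inequality $-\chi(\mathbb{G}_A') \ge \tfrac12(t(D)-2(n+m-2))$ rearranges to $t(D)-1 \le 2(|\beta_K'|+n+m-\tfrac72)$, and $10v_3(t(D)-1)$ becomes the stated $20v_3(|\beta_K'|+n+m-\tfrac72)$.
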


\section*{Acknowledgments}
I would like to thank Efstratia Kalfagianni for helpful conversations that led to this project. I would also like to thank the anonymous referee for helpful comments that led to an improvement in the quality of exposition in this paper.

\section{Volume bounds for A-adequate closed braids}

\subsection{Preliminaries}

\begin{figure}
	\centering
		\def\svgwidth{2.5in}
		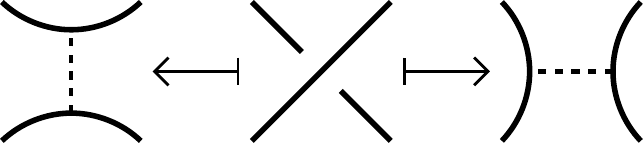
	\caption{A crossing neighborhood of a link diagram, along with its A-resolution and B-resolution.}
	\label{resolutions}
\end{figure}

Let $D(K) \subseteq S^2$ denote a diagram of a link $K \subseteq S^3$. To smooth a crossing of the link diagram $D(K)$, we may either \emph{A-resolve} or \emph{B-resolve} this crossing according to Figure~\ref{resolutions}. By A-resolving each crossing of $D(K)$ we form the \emph{all-A state} of $D(K)$, which is denoted by $H_{A}$ and consists of a disjoint collection of \emph{all-A circles} and a disjoint collection of dotted line segments, called \emph{A-segments}, that are used record the locations of crossing resolutions. We will adopt the convention throughout this paper that any unlabeled segments are assumed to be A-segments. We call a link diagram $D(K)$ \emph{A-adequate} if $H_{A}$ does not contain any A-segments that join an all-A circle to itself, and we call a link $K$ \emph{A-adequate} if it has a diagram that is A-adequate. While we will focus exclusively on A-adequate links, our results can easily be extended to semi-adequate links by reflecting the link diagram $D(K)$ and obtaining the corresponding results for B-adequate links. 

From $H_{A}$ we may form the \emph{all-A graph}, denoted $\mathbb{G}_{A}$, by contracting the all-A circles to vertices and reinterpreting the A-segments as edges. From this graph we can form the \emph{reduced all-A graph}, denoted $\mathbb{G}_{A}'$, by replacing parallel edges with a single edge. For an example of a diagram $D(K)$, its all-A resolution $H_{A}$, its all-A graph $\mathbb{G}_{A}$, and its reduced all-A graph $\mathbb{G}_{A}'$, see Figure~\ref{figure8}. Let $v(G)$ and $e(G)$ denote the number of vertices and edges, respectively, in a graph $G$. Let $-\chi(G)=e(G)-v(G)$ denote the negative Euler characteristic of $G$. 

\begin{figure}
	\centering
		\def\svgwidth{3.5in}
		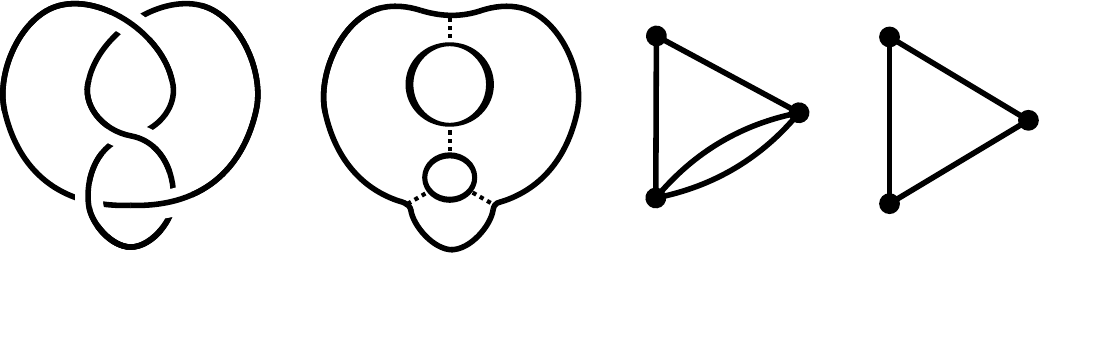
	\caption{A link diagram $D(K)$, its all-A resolution $H_{A}$, its all-A graph $\mathbb{G}_{A}$, and its reduced all-A graph $\mathbb{G}_{A}'$.}
	\label{figure8}
\end{figure}


\begin{figure}
	\centering
		\def\svgwidth{200pt}
		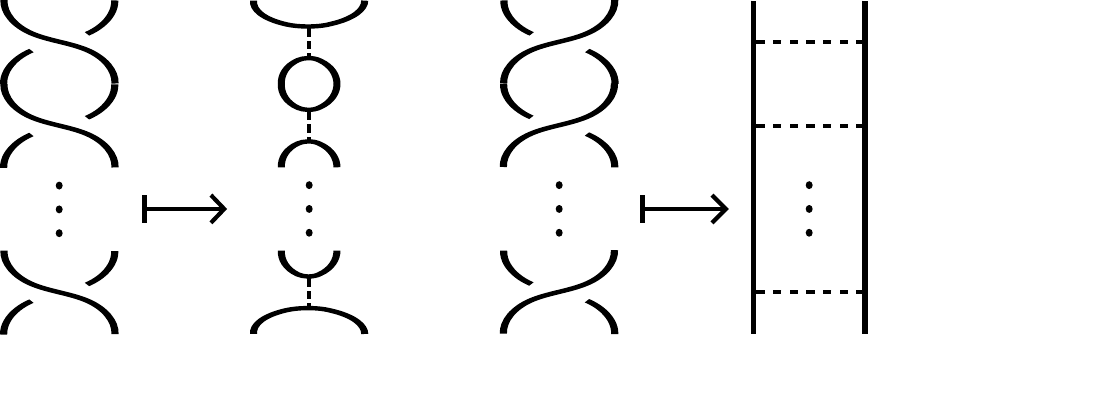
	\caption{The long and short resolutions of a twist region of $D(K)$.}
	\label{longshort}
\end{figure}

\begin{definition} 
Define a \emph{twist region} of $D(K)$ to be a longest possible string of bigons in the projection graph of $D(K)$. Denote the number of twist regions in $D(K)$ by $t(D)$ and call $t(D)$ the \emph{twist number} of $D(K)$. Note that it is possible for a twist region to consist of a single crossing of $D(K)$. If a given twist region contains two or more crossings, then the A-resolution of a left-handed twist region will be called a \emph{long resolution} and the A-resolution of a right-handed twist region will be called a \emph{short resolution}. See Figure~\ref{longshort} for depictions of these resolutions. We will call a twist region \emph{long} if its A-resolution is long and \emph{short} if its A-resolution is short. 
\end{definition}

\begin{definition}
A link diagram $D(K)$ satisfies the \emph{two-edge loop condition (TELC)} if, whenever two all-A circles share a pair of A-segments, these segments correspond to crossings from the same short twist region of $D(K)$. (See the right side of Figure~\ref{longshort}.)
\end{definition}

Recall that $v_{8}=3.6638\ldots$ denotes the volume of a regular ideal octahedron and $v_{3}=1.0149\ldots$ denotes the volume of a regular ideal tetrahedron. By combining results from \cite{New}, \cite{Guts}, and \cite{Lackenby}, we get the following key result.

\begin{theorem}[Corollary 1.4 of \cite{New}, Theorem from Appendix of \cite{Lackenby}]
Let $D(K)$ be a connected, prime, A-adequate link diagram that satisfies the TELC and contains $t(D) \geq 2$ twist regions. Then $K$ is hyperbolic and
\label{Cor}
\begin{equation*}
-v_{8}\cdot\chi(\mathbb{G}_{A}') \leq \mathrm{vol}(S^{3}\backslash K) < 10v_{3}\cdot(t(D)-1).
\end{equation*}
\end{theorem}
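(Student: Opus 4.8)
The plan is to assemble the statement from two essentially independent inputs: a lower bound coming from the guts machinery of \cite{Guts} (packaged as Corollary 1.4 of \cite{New}) and an upper bound coming from the ideal polyhedral decomposition of augmented link complements (the Agol--D.~Thurston appendix to \cite{Lackenby}). Hyperbolicity of $S^{3}\backslash K$ will come out of the lower-bound input, once the guts are known to be nonempty.

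For the lower bound, the first step is to form the all-A state surface $S_{A}\subseteq S^{3}$ spanned by the all-A circles of $H_{A}$ together with the twisted bands at the crossings, and to recall (Ozsv\'ath--Szab\'o; Futer--Kalfagianni--Purcell) that A-adequacy of $D$ makes $S_{A}$ an essential spanning surface for $K$. Cutting the exterior of $K$ along $S_{A}$ yields a compact manifold whose characteristic-submanifold (JSJ) decomposition splits off $I$-bundle and Seifert-fibered pieces; the guts is what remains, and one has $\mathrm{vol}(S^{3}\backslash K)\geq v_{8}\cdot\|\partial\,\mathrm{guts}\|$ for the appropriate norm. The role of connectedness, primeness, and especially the TELC is to control precisely which faces of the polyhedral decomposition of the cut-open manifold get absorbed into the $I$-bundle: under the TELC the only parallelism among the A-segments contributing to the $I$-bundle is the expected one coming from short twist regions, and a bookkeeping count then identifies $\|\partial\,\mathrm{guts}\|$ with $-\chi(\mathbb{G}_{A}')$. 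Rather than redo that bookkeeping, I would verify that our hypotheses (connected, prime, A-adequate, TELC, $t(D)\geq 2$) are exactly those of Corollary 1.4 of \cite{New} and quote it; the same machinery shows the guts is nonempty, which together with irreducibility and atoroidality forces $S^{3}\backslash K$ to be hyperbolic.

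For the upper bound, I would pass to the fully augmented link $L$ obtained from $D$ by encircling each of the $t(D)$ twist regions with a crossing circle and deleting full twists. Since $D$ is connected and prime with $t(D)\geq 2$, $L$ is a non-split hyperbolic link, and $S^{3}\backslash K$ is obtained from $S^{3}\backslash L$ by (nontrivial) Dehn filling along the $t(D)$ crossing-circle cusps; Thurston's Dehn surgery theorem then gives the strict inequality $\mathrm{vol}(S^{3}\backslash K)<\mathrm{vol}(S^{3}\backslash L)$. The augmented link complement admits an explicit decomposition into two ideal polyhedra that subdivide into at most $10(t(D)-1)$ ideal tetrahedra, whence $\mathrm{vol}(S^{3}\backslash L)\leq 10v_{3}\cdot(t(D)-1)$; this is the content of the Agol--D.~Thurston appendix to \cite{Lackenby}, which I would cite, checking once more that primeness and $t(D)\geq 2$ are in force. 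Combining the two displays gives the theorem.

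The main obstacle is not any single deep step --- both inequalities are black boxes --- but making sure the hypotheses line up. In particular, one must check that the ``connected $+$ prime $+$ A-adequate $+$ TELC'' package really is the hypothesis set of Corollary 1.4 of \cite{New} (some versions of this circle of results instead assume the diagram is \emph{twist-reduced}, and one must confirm the needed implication or equivalence in the presence of the other hypotheses), and that the form of the Agol--Thurston bound being invoked applies to the class of diagrams at hand rather than only to alternating ones. A secondary point to be careful about is the source of strictness in the upper bound: it must come from the Dehn filling being genuinely nontrivial, which uses $t(D)\geq 2$ to guarantee that $L$ is an honest hyperbolic link rather than a degenerate one.
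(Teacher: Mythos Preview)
The paper does not actually prove this theorem: it is stated without proof as a black box, assembled by citation from Corollary~1.4 of \cite{New} (lower bound and hyperbolicity) and the Agol--D.~Thurston appendix of \cite{Lackenby} (upper bound). Your proposal identifies exactly these two inputs and sketches the mechanisms behind them (guts of the cut-along-$S_A$ manifold for the lower bound, passage to the fully augmented link and Dehn filling for the upper bound), which is more detail than the paper itself supplies; in that sense your approach and the paper's coincide.

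Your cautionary remarks are well placed. The hypothesis-matching for the lower bound is precisely what one must verify when invoking \cite{New}, and the Agol--Thurston upper bound indeed goes through for arbitrary connected prime diagrams with $t(D)\geq 2$ (not just alternating ones), since the augmented-link polyhedral count depends only on the twist structure, not on alternation. One small correction: strictness of the upper bound follows from any nontrivial Dehn filling on a hyperbolic manifold; the role of $t(D)\geq 2$ (together with connectedness and primeness) is to ensure the augmented link $L$ is hyperbolic in the first place, not to make the filling nontrivial.
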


\subsection{Braids and A-adequacy} 


Recall that the \emph{n-braid group} has Artin presentation

$$\displaystyle B_{n} = \left\langle \sigma_{1}, \cdots, \sigma_{n-1}\ |\ \sigma_{i}\sigma_{j}=\sigma_{j}\sigma_{i}\ \mathrm{and}\ \sigma_{i}\sigma_{i+1}\sigma_{i} = \sigma_{i+1}\sigma_{i}\sigma_{i+1}\right\rangle,$$ 

\noindent where the generators $\sigma_{i}$ are depicted in Figure~\ref{braidgen}, where $\left|i-j\right|\geq2$ and $1 \leq i < j \leq n-1$ in the first relation (sometimes called \emph{far commutativity}), and where $1\leq i\leq n-2$ in the second relation. As a special case, the \emph{3-braid group} has Artin presentation

$$B_{3}=\left\langle \sigma_{1}, \sigma_{2}\ |\ \sigma_{1}\sigma_{2}\sigma_{1}=\sigma_{2}\sigma_{1}\sigma_{2}\right\rangle.$$ 

\begin{figure}
	\centering
	\def\svgwidth{3in}
		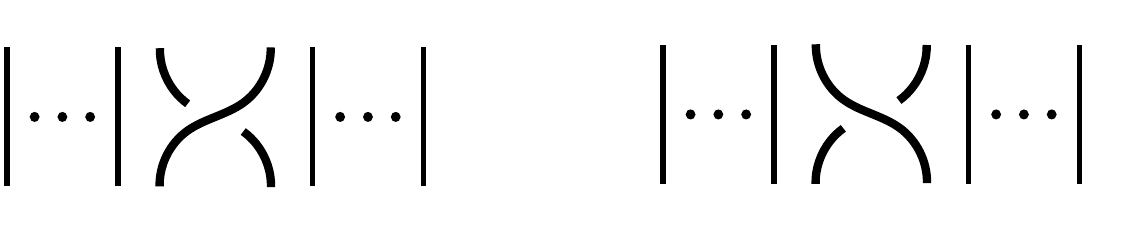
	\caption{The braid group generators $\sigma_{i}$ and $\sigma_{i}^{-1}$ of $B_{n}$, where $1 \leq i \leq n-1$.}
	\label{braidgen}
\end{figure}

\begin{definition}
Call a subword $\gamma$ of a braid word $\beta \in B_{n}$ \emph{(cyclically) induced} if the letters of $\gamma$ are all (cyclically) adjacent and appear in the same order as in the full braid word $\beta$. As an example, given the braid word 

$$\beta=\sigma_{1}^{3}\sigma_{2}^{-3}\sigma_{1}^{2}\sigma_{3}^{-2}\sigma_{2}\sigma_{3}=\sigma_{1}^{3}\sigma_{2}^{-1}(\sigma_{2}^{-2}\sigma_{1}^{2}\sigma_{3}^{-1})\sigma_{3}^{-1}\sigma_{2}\sigma_{3},$$ 

\noindent the subword $\gamma=\sigma_{2}^{-2}\sigma_{1}^{2}\sigma_{3}^{-1}$ is an induced subword of $\beta$. 
\end{definition} 

Because conjugate braids have isotopic braid closures, we will usually work within conjugacy classes of braids. Furthermore, note that cyclic permutation is a special case of conjugation.

\begin{definition} A braid $\beta=\sigma_{m_{1}}^{r_{1}}\cdots\sigma_{m_{l}}^{r_{l}} \in B_{n}$ is called \emph{cyclically reduced into syllables} if the following hold:

\begin{enumerate}
\item[(1)] $r_{i}\neq 0$ for all $i$. 
\item[(2)] there are no occurrences of induced subwords of the form $\sigma_{i}\sigma_{i}^{-1}$ or $\sigma_{i}^{-1}\sigma_{i}$ for any $i$ (looking up to cyclic permutation).
\item[(3)] $m_{i}\neq m_{i+1}$ for all $i$ (modulo $l$). 
\end{enumerate}
\end{definition}

Recall that a braid $\beta=\sigma_{m_{1}}^{r_{1}}\cdots\sigma_{m_{l}}^{r_{l}} \in B_{n}$ is called \emph{positive} if all of the exponents $r_{i}$ are positive and \emph{negative} if all of the exponents $r_{i}$ are negative. We define a syllable $\sigma_{m_{i}}^{r_{i}}$ of the braid word to be \emph{positive} if $r_{i}>0$ and \emph{negative} if $r_{i}<0$. We now present Stoimenow's (\cite{Stoimenow}) classification of A-adequate closed 3-braid diagrams. 

\begin{proposition}[\cite{Stoimenow}, Lemma 6.1] 

Let $D(K)=\widehat{\beta}$ denote the closure of a 3-braid 

$$\beta=\sigma_{i}^{r_{1}}\sigma_{j}^{r_{2}}\cdots\sigma_{i}^{r_{2l-1}}\sigma_{j}^{r_{2l}} \in B_{3},$$ 

\noindent where $\left\{i,j\right\}=\left\{1,2\right\}$, where $l \geq 2$, and where $\beta$ has been cyclically reduced into syllables. Then $D(K)$ is A-adequate if and only if either 
\begin{enumerate}\label{adeq3braid}
\item[(1)] $\beta$ is positive, or
\item[(2)] $\beta$ does not contain $\sigma_{1}^{-1}\sigma_{2}^{-1}\sigma_{1}^{-1}=\sigma_{2}^{-1}\sigma_{1}^{-1}\sigma_{2}^{-1}$ as a cyclically induced subword and $\beta$ also has the property that all positive syllables in the braid word are cyclically neighbored on both sides by negative syllables.   
\end{enumerate}
\end{proposition}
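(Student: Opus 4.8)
The plan is to argue directly at the level of the all-A state $H_A$ of the closed braid $D(K)=\widehat{\beta}$, reading off its all-A circles and its A-segments from the syllable word. The first task is a structural description of $H_A$. Using the resolution conventions of Figures~\ref{resolutions} and~\ref{longshort}: a positive syllable $\sigma_i^{r}$ A-resolves to a stack of $r$ braid-like (identity) smoothings, so it leaves all strands running straight and contributes $r$ parallel A-segments joining the all-A circle through braid position $i$ to the one through position $i+1$; a negative syllable $\sigma_i^{-r}$ A-resolves to an upper arc joining positions $i,i+1$ above it, a lower arc joining positions $i,i+1$ below it, and $r-1$ bigon circles strung between them, and its $r$ A-segments form a chain from the upper arc through the bigons to the lower arc. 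Since A-adequacy of $D(K)$ means exactly that no A-segment is a loop based at a single all-A circle, this description shows that a failure of A-adequacy can occur only in one of two ways: (a) at a positive syllable $\sigma_i^{r}$ whose two bordering all-A circles (through positions $i$ and $i+1$) coincide, or (b) at a length-one negative syllable $\sigma_i^{-1}$ whose upper and lower arcs lie on a common all-A circle. Negative syllables of length $\geq 2$ are automatically harmless: the bigons split their chain of A-segments into genuinely distinct circles.

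For the ``if'' direction I would treat the two hypotheses separately. If $\beta$ is positive, then the all-A resolution of $\widehat{\beta}$ is just the closure of the trivial $3$-braid, so $H_A$ has exactly three all-A circles, one per strand; every A-segment comes from a positive syllable $\sigma_i^{r}$ and joins the circle through position $i$ to the distinct circle through position $i+1$, so $D(K)$ is A-adequate. Now suppose instead that $\beta$ satisfies condition (2). Here the hypotheses translate cleanly into statements about the cyclic sequence of syllables: ``no cyclically induced $\sigma_1^{-1}\sigma_2^{-1}\sigma_1^{-1}$'' says precisely that every length-one negative syllable has a positive syllable as a cyclic neighbor, and ``every positive syllable is cyclically flanked by negatives'' inserts an upper/lower arc of the other generator immediately on each side of every positive syllable. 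I would then rule out (a) and (b) by tracing, for each potentially offending syllable, the all-A circle(s) through its A-segment endpoints and showing that these flanking negative arcs re-route the circle so that the two endpoints land on different circles. A convenient way to organize all of this is to first delete every bigon and replace each negative syllable $\sigma_i^{-r}$ by the Temperley--Lieb cup-cap $e_i$ and each positive syllable by the identity; the all-A circles of this collapsed state are then the loops of the cyclically closed word $\prod_k e_{i_k}$ in the Temperley--Lieb algebra, and the syllable structure forced by (2) controls exactly which of those loops each A-segment endpoint sits on (in particular, a positive syllable sandwiched between two negatives of the other generator always has one A-segment endpoint ``escape'' onto the small loop that the reduction $e_i e_i = \delta e_i$ absorbs).

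For the ``only if'' direction I would argue by contraposition: assume $\beta$ is not positive (so at least one negative syllable occurs) and that condition (2) fails. If some $\sigma_1^{-1}\sigma_2^{-1}\sigma_1^{-1}$ occurs as a cyclically induced subword, then the middle syllable is a length-one negative syllable flanked on both sides by length-one negative syllables of the other generator, and a short local computation inside that block of three syllables shows that its upper and lower arcs are joined to one another along a path through the two flanking arcs, independently of the rest of $\beta$; hence that syllable's A-segment is a loop and $D(K)$ is not A-adequate. If instead condition (2) fails because some positive syllable has a positive cyclic neighbor, then that pair of consecutive positive syllables uses both braid generators and runs all strands straight through; since the collapsed word $\prod_k e_{i_k}$ is a nonempty product of cup-caps it cannot equal the identity of the Temperley--Lieb algebra, so its closure merges at least one of the two adjacent pairs of strand positions, and (choosing the member of the pair whose generator matches that merge) one finds among these syllables a positive syllable whose A-segments are loops. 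Either way $D(K)$ fails to be A-adequate.

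The step I expect to be the main obstacle is the ``if'' direction under condition (2): showing positively that \emph{no} loop A-segment occurs anywhere requires a genuinely global understanding of how the all-A circles of $H_A$ are routed in terms of the syllable word, rather than the essentially local computations that suffice for the positive case and for the ``only if'' direction. Carrying this out cleanly---most naturally via the bigon-collapsed Temperley--Lieb picture, with the verification broken into cases according to which generators appear in the negative syllables flanking each positive syllable and each length-one negative syllable---is where the real work lies.
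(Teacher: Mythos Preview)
The paper does not prove this proposition at all: it is quoted verbatim as Stoimenow's result (\cite{Stoimenow}, Lemma~6.1) and used as input, with no argument supplied. There is therefore no ``paper's own proof'' to compare your proposal against.

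As for the proposal itself, your overall set-up (reading off the all-A circles from the syllable word, isolating the two possible failure modes, and collapsing bigons to a Temperley--Lieb picture) is sound and is essentially how Stoimenow argues. Two points are worth flagging. First, in the ``only if'' direction your treatment of the case of two adjacent positive syllables is too loose: you assert that because the collapsed Temperley--Lieb word is a nonempty product of cup-caps it merges some pair of adjacent strand positions, and that this forces a loop A-segment at one of the two positive syllables. But the merge you obtain from the global Temperley--Lieb word need not occur \emph{locally} at the positions $i,i+1$ or $i+1,i+2$ adjacent to your chosen pair; you still need to trace the actual circle through that block to see which positive syllable has its two endpoints identified, and the answer depends on the neighboring negative syllables. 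Second, you correctly identify the hard part---the ``if'' direction under condition~(2)---but do not actually carry it out; the bookkeeping of how the flanking negative arcs route the all-A circles through a positive syllable (and through a length-one negative syllable with a positive neighbor) is exactly the content of Stoimenow's case analysis, and your sketch stops short of it.
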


\subsection{A lemma for closed braids} 

The main goal of this section is to produce a family of closed $n$-braids that satisfies the hypotheses of Theorem~\ref{Cor}. We will work within this family of braids throughout the rest of this paper. We begin with necessary definitions.  

\begin{definition}
Call two induced subwords $\gamma_{1}$ and $\gamma_{2}$ of a braid word $\beta \in B_{n}$ \emph{disjoint} if the subwords share no common letters when they are viewed as part of $\beta$. As an example, given the braid word
$$\beta=\sigma_{1}^{3}\sigma_{2}^{-3}\sigma_{1}^{2}\sigma_{3}^{-2}\sigma_{2}\sigma_{3}=\sigma_{1}^{2}(\sigma_{1}\sigma_{2}^{-3}\sigma_{1})\sigma_{1}(\sigma_{3}^{-2}\sigma_{2})\sigma_{3},$$ 
\noindent the subwords $\gamma_{1}=\sigma_{1}\sigma_{2}^{-3}\sigma_{1}$ and $\gamma_{2}=\sigma_{3}^{-2}\sigma_{2}$ are disjoint induced subwords of $\beta$. 
\end{definition}

\begin{definition}
Call a subword $\gamma$ of a braid word $\beta \in B_{n}$ \emph{complete} if it contains, at some point, each of the generators $\sigma_{1}, \ldots, \sigma_{n-1}$ of $B_{n}$. It is important to note that the generators $\sigma_{1}, \ldots, \sigma_{n-1}$ need not occur in any particular order and that repetition of some or all of these generators is allowed.
\end{definition}

Because the majority of the braids considered in this paper will satisfy the same set of assumptions, we make the following definition. 

\begin{definition}
Call a braid $\beta \in B_{n}$ \emph{nice} if $\beta$ is cyclically reduced into syllables and $\beta$ contains two disjoint induced complete subwords.
\end{definition}

Note that, in the case when $n=3$, the condition that the 3-braid $\beta=\sigma_{i}^{r_{1}}\sigma_{j}^{r_{2}}\cdots\sigma_{i}^{r_{2l-1}}\sigma_{j}^{r_{2l}} \in B_{3}$ is nice is equivalent to the condition that $\sigma_{1}$ and $\sigma_{2}$ each occur at least twice nontrivially in the braid word $\beta$ (which is equivalent to the condition that $l \geq 2$). 

We now define, in the following Main Lemma, the family of closed $n$-braids that we will consider for the remainder of this paper. 

\begin{lemma}[The Main Lemma] 
\label{setup}
Let $D(K)=\widehat{\beta}$ denote the closure of a nice $n$-braid $\beta=\sigma_{m_{1}}^{r_{1}}\cdots\sigma_{m_{l}}^{r_{l}} \in B_{n}$. View $D(K)$ as lying in an annular region of the plane and assume that $\beta$ satisfies the conditions
\begin{enumerate}   
\item[(1)] all negative exponents $r_{i}<0$ in $\beta$ satisfy the stronger requirement that $r_{i}\leq-3$;
\item[(2a)] when $r_{i}>0$ and $m_{i}=1$, we have that $r_{i-1} \leq -3$, that $r_{i+1} \leq -3$, and that $m_{i-1}=m_{i+1}=2$; 
\item[(2b)] when $r_{i}>0$ and $2 \leq m_{i} \leq n-2$, we have that $r_{i-2} \leq -3$, that $r_{i-1} \leq -3$, that $r_{i+1} \leq -3$, that $r_{i+2} \leq -3$, and that $\{m_{i-2}, m_{i-1}\}=\{m_{i+1}, m_{i+2}\}=\{m_{i}-1, m_{i}+1\}$; and
\item[(2c)] when $r_{i}>0$ and $m_{i}=n-1$, we have that $r_{i-1} \leq -3$, that $r_{i+1} \leq -3$, and that $m_{i-1}=m_{i+1}=n-2$. 
\end{enumerate}
Then we may categorize the all-A circles of $H_{A}$ into the following types:
\begin{itemize}
\item \emph{small inner circles} that arise from negative exponents $r_{i}\leq-3$ in the braid word $\beta$
\item \emph{medium inner circles} that arise from cyclically isolated positive syllables in the braid word $\beta$ 
\item \emph{essential wandering circles} that are essential in the annulus and have wandering that arises from adjacent negative syllables in adjacent generators in the braid word $\beta$ 
\item \emph{non-essential wandering circles} that are non-essential (contractible) in the annulus and have wandering that arises from adjacent negative syllables in adjacent generators in the braid word $\beta$  
\item \emph{nonwandering circles} that arise from the case when all syllables in the generator $\sigma_{1}$ are positive or the case when all syllables in the generator $\sigma_{n-1}$ are positive.
\end{itemize}
\noindent Furthermore, we have that $K$ is a hyperbolic link and that $D(K)$ is a connected, prime, A-adequate link diagram that satisfies the TELC and contains $t(D)~\geq~2(n-1)$ twist regions.    
\end{lemma}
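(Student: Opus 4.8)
The plan is to analyze the all-A state $H_A$ of $D(K) = \widehat{\beta}$ directly, crossing block by crossing block, and then verify the hypotheses of Theorem~\ref{Cor} one at a time. First I would set up the picture: the closed braid sits in an annulus, and the braid word decomposes into syllables $\sigma_{m_i}^{r_i}$. For each negative syllable with $r_i \le -3$, A-resolving the $|r_i|$ crossings produces a chain of small circles trapped between two consecutive strands — these are the \emph{small inner circles} — together with the two long arcs on the outside that run through the whole syllable. For each cyclically isolated positive syllable (forced by conditions (2a)--(2c) to be flanked by deep negative syllables in the neighboring strand(s)), A-resolving gives the ``short resolution'' of Figure~\ref{longshort}, contributing one \emph{medium inner circle}. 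The remaining arcs, coming from the long resolutions, thread around through several syllables; whether such an arc closes up essentially or contractibly in the annulus (or is a \emph{nonwandering circle} when one extremal generator has only positive syllables) is a bookkeeping matter that I would carry out by tracking how arcs are glued across syllable boundaries using conditions (2a)--(2c). This establishes the stated classification of all-A circles.

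Next I would prove the four properties needed to invoke Theorem~\ref{Cor}. \emph{Connectedness and primeness of $D(K)$:} connectedness is immediate since it is a braid closure, and primeness follows because a nice braid uses each generator at least twice (indeed contains two disjoint complete subwords), so the closure cannot be split off as a connected sum along a circle meeting the diagram in two points — I would argue that any such circle would have to separate the braid axis from part of the diagram and contradict the recurrence of generators. \emph{A-adequacy:} I need to show no A-segment joins an all-A circle to itself. A-segments come in two flavors: those internal to a negative syllable (joining consecutive small inner circles, or a small inner circle to an adjacent arc-circle) and those at the site of a positive crossing. For the first flavor, condition (1) ($r_i \le -3$, so at least three crossings per negative syllable) guarantees the two circles joined by such a segment are genuinely distinct; for the second flavor, conditions (2a)--(2c) guarantee a positive syllable is isolated and surrounded by deep negative syllables, so its A-segment joins the medium inner circle to a different (wandering or nonwandering) circle. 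I would enumerate these cases against the classification just established. \emph{TELC:} two all-A circles sharing a pair of A-segments must be two consecutive small inner circles inside a single negative syllable, and the A-resolution of a negative (right-handed) twist region is exactly the short resolution, so the TELC condition is met; I would check that no two circles of other types can share two A-segments, again using (1) and (2a)--(2c). \emph{Twist number:} each of the $2(n-1)$ generator-blocks forced by niceness (each generator appears at least twice, hence in at least... ) — more carefully, since $\beta$ is cyclically reduced into syllables and nice, there are at least two syllables in each of the $n-1$ generators, giving at least $2(n-1)$ syllables, and distinct syllables lie in distinct twist regions because consecutive syllables use different generators; hence $t(D) \ge 2(n-1) \ge 2$.

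Finally, hyperbolicity of $K$ follows from Theorem~\ref{Cor} once the four diagrammatic hypotheses are in place, so this needs no separate argument. The main obstacle I expect is the circle-classification step: correctly tracking how the long-resolution arcs close up in the annulus and distinguishing essential from non-essential wandering circles, since this requires a careful global analysis of the cyclic word rather than a purely local crossing-by-crossing check. The verifications of primeness and of A-adequacy at positive crossings are the next most delicate points — primeness because one must rule out hidden connect-sum spheres using only the combinatorial niceness hypothesis, and A-adequacy because one must be certain conditions (2a)--(2c) genuinely isolate every positive syllable from creating a self-loop. The TELC and twist-number claims should be routine once the classification is established.
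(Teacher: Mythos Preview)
Your overall strategy matches the paper's exactly: it proves the lemma through four sublemmas (circle classification, connectedness/primeness/twist count, A-adequacy, TELC) and then invokes Theorem~\ref{Cor} for hyperbolicity. Two points in your sketch need repair, however.

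First, a minor miscount: an isolated positive syllable contributes \emph{one} medium inner circle only when $m_i\in\{1,n-1\}$ (conditions (2a), (2c)); under condition (2b), with $2\le m_i\le n-2$, the short resolution is flanked on \emph{both} sides by long-resolution columns, and one gets \emph{two} medium inner circles (see the center of Figure~\ref{loop}). This matters later for the volume estimates.

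Second, and more seriously, your TELC argument is inverted. In the paper's conventions, \emph{negative} syllables A-resolve to the \emph{long} resolution (vertical segments), while \emph{positive} syllables give the \emph{short} resolution (horizontal segments). Two consecutive small inner circles inside a negative syllable share exactly \emph{one} A-segment, not two, so that case never arises in the TELC analysis. The situation that actually needs checking is when a medium inner circle shares two horizontal A-segments with an adjacent circle --- and there one verifies that both segments lie in the same positive (short) twist region, so TELC holds. The paper's Sublemma~\ref{sublemma4} handles this by cases on horizontal versus vertical segments; condition~(1) is what rules out the all-vertical case, since a small inner circle can never be adjacent to the same circle twice. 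Your sentence ``the A-resolution of a negative (right-handed) twist region is exactly the short resolution'' conflates both the sign and the resolution type, and as written would not establish TELC.
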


\begin{remark} \label{cond}
Condition~(2a) of the above theorem requires that positive syllables in the generator $\sigma_{1}$ are cyclically neighbored on both sides by negative syllables in the generator $\sigma_{2}$. Similarly, Condition~(2c) of the above theorem requires that positive syllables in the generator $\sigma_{n-1}$ are cyclically neighbored on both sides by negative syllables in the generator $\sigma_{n-2}$. Condition~(2b) of the above theorem requires that positive syllables in the generator $\sigma_{i}$, where $2 \leq i \leq n-2$, are cyclically neighbored on both sides by a pair of negative syllables in the far commuting generators $\sigma_{i-1}$ and $\sigma_{i+1}$.
\end{remark}

Note that Condition~(2a), Condition~(2b), and Condition~(2c) are in a spirit similar to the condition that positive syllables are cyclically neighbored on both sides by negative syllables in Stoimenow's classification of A-adequate closed 3-braid diagrams (Proposition~\ref{adeq3braid}). 

\begin{figure}
	\centering
		\includegraphics[width=3in]{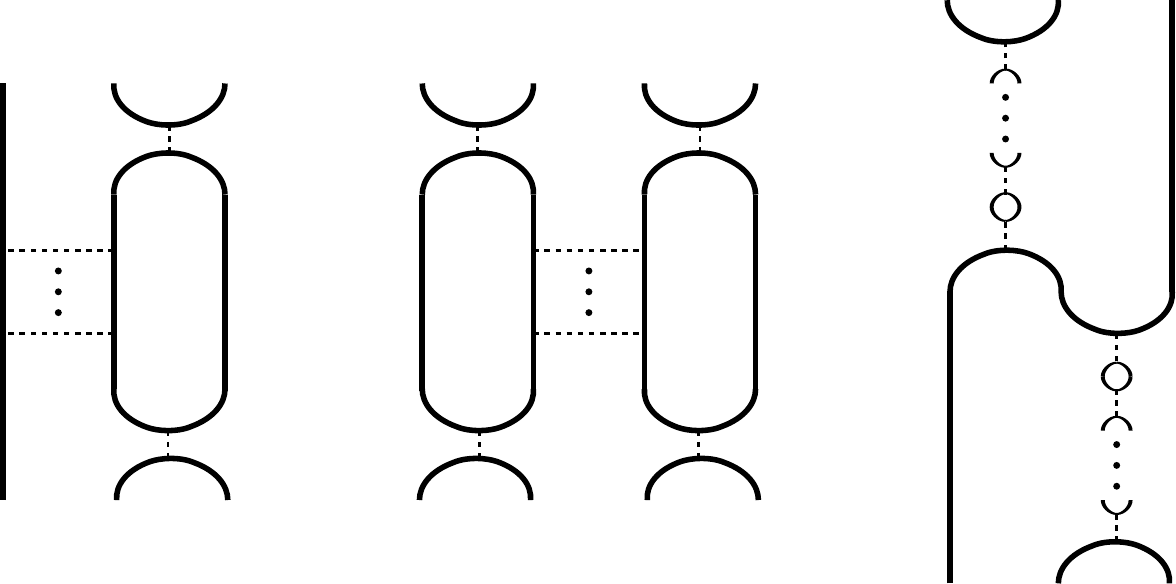}
	\caption{An example of an isolated positive syllable $\sigma_{1}^{r_{i}}$ corresponding to a single medium inner circle (left), an example of an isolated positive syllable $\sigma_{m_{i}}^{r_{i}}$, where $2 \leq m_{i} \leq n-2$, corresponding to two medium inner circles (center), and an example of a wandering circle portion coming from a negative subword $\sigma_{m_{i}}^{r_{i}}\sigma_{m_{i+1}}^{r_{i+1}}=\sigma_{{m_i}}^{r_{i}+1}(\sigma_{m_{i}}^{-1}\sigma_{m_{i+1}}^{-1})\sigma_{m_{i+1}}^{r_{i+1}+1}$ (right).}
	\label{loop}
\end{figure}

We will prove Lemma~\ref{setup} through a series of sublemmas. 

\begin{sublemma} \label{sublemma1} Let $D(K)=\widehat{\beta}$ satisfy the assumptions of Lemma~\ref{setup}. Then, as described in the statement of Lemma~\ref{setup}, we may categorize the all-A circles of $H_{A}$ into small inner circles, medium inner circles, essential wandering circles, non-essential wandering circles, and nonwandering circles.  
\end{sublemma}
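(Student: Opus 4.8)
## Proof Proposal for Sublemma~\ref{sublemma1}

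The plan is to trace through the all-A resolution $H_A$ of $D(K)=\widehat{\beta}$ syllable by syllable, using the structural constraints (1), (2a), (2b), (2c) of Lemma~\ref{setup} to control how the state circles are formed. The key point is that the all-A resolution treats each syllable of $\beta$ as a local picture, and the way these local pictures are glued along the braid strands and around the annulus is what produces the five advertised circle types. First I would recall, via Figure~\ref{longshort}, that a negative syllable $\sigma_{m_i}^{r_i}$ with $r_i\le -3$ A-resolves to a ``short resolution'': it contributes $r_i-1$ nested small circles (the \emph{small inner circles}), together with two arcs on the outer strands that are available for further gluing. A positive syllable $\sigma_{m_i}^{r_i}$ A-resolves to a ``long resolution'' consisting of a single long arc that runs alongside the twist region; whether this produces new closed circles depends entirely on what sits cyclically adjacent to it.

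The second step is to analyze what happens at a positive syllable using conditions (2a)/(2b)/(2c). When $m_i=1$ (condition (2a)), the positive syllable $\sigma_1^{r_i}$ is cyclically flanked by negative syllables $\sigma_2^{r_{i-1}}$ and $\sigma_2^{r_{i+1}}$; I would check that the long resolution arc, once joined with the neighboring short-resolution arcs on strands $1$ and $2$, closes up into exactly one circle — a \emph{medium inner circle}. When $2\le m_i\le n-2$ (condition (2b)), the positive syllable is flanked by a pair of negative syllables in each of $\sigma_{m_i-1}$ and $\sigma_{m_i+1}$ (the two far-commuting neighbors); here the long arc interacts with short resolutions on both sides and closes up into two circles (this is the ``center'' picture of Figure~\ref{loop}), again \emph{medium inner circles}. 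Condition (2c) for $m_i=n-1$ is the mirror of (2a). The point of conditions (2a)--(2c) is precisely to guarantee that a positive syllable never abuts another positive syllable or a short negative neighbor in a way that would let the long arc ``escape'' its local region; this is what makes the medium-inner-circle description exhaustive for positive syllables. I would also dispatch the degenerate case where $\sigma_1$ (or $\sigma_{n-1}$) occurs with only positive exponents: then the strands at that end never get a short resolution inserted, and the outermost strand closes around the annulus to give a \emph{nonwandering circle}.

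The third step handles the arcs left over after all small and medium inner circles and nonwandering circles have been accounted for. What remains are the ``outer'' strand segments of the short resolutions, which get concatenated along the braid as one passes from a negative syllable in $\sigma_{m_i}$ to an adjacent negative syllable in an adjacent generator $\sigma_{m_{i\pm 1}}$ with $|m_i-m_{i\pm1}|=1$; the relevant local move is the factorization $\sigma_{m_i}^{r_i}\sigma_{m_{i+1}}^{r_{i+1}}=\sigma_{m_i}^{r_i+1}(\sigma_{m_i}^{-1}\sigma_{m_{i+1}}^{-1})\sigma_{m_{i+1}}^{r_{i+1}+1}$ shown on the right of Figure~\ref{loop}. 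These segments patch together into ``wandering'' circles that weave across several braid strands. Each such circle, viewed in the annular region carrying $D(K)$, is either essential (homologically nontrivial in the annulus) or contractible; I classify them accordingly as \emph{essential} or \emph{non-essential wandering circles}. Finally I would argue exhaustiveness: every crossing of $\beta$ is A-resolved, condition~(1) guarantees every negative syllable has exponent $\le -3$ so the small/wandering dichotomy for negative-syllable arcs is clean, and conditions (2a)--(2c) guarantee every positive syllable is ``isolated'' in the required sense; hence every state circle of $H_A$ has been placed in exactly one of the five classes.

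The main obstacle I anticipate is the bookkeeping in the third step: showing that the leftover outer-strand arcs really do close up into well-defined wandering circles and that each is unambiguously essential or non-essential in the annulus. This requires carefully following the strand permutation induced by the (signs of the) syllables and checking that no arc is accidentally shared between a putative wandering circle and an inner or nonwandering circle — which is exactly where conditions (2a)--(2c) (preventing positive syllables from being adjacent to each other or from having ``wrong-generator'' negative neighbors) do the real work. The positive-syllable analysis in step two, particularly the case $2\le m_i\le n-2$ with its two far-commuting flanking pairs, is the most delicate local computation, but it is a finite local check once the resolution picture is drawn.
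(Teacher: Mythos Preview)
Your approach matches the paper's almost exactly: decompose $\beta$ into its syllables, read off small inner circles from the A-resolution of each negative syllable, read off one or two medium inner circles from each isolated positive syllable via conditions (2a)--(2c), note that a nonwandering circle appears precisely when $\sigma_1$ or $\sigma_{n-1}$ occurs only with positive exponent, and then observe that the remaining outer arcs glue along adjacent negative syllables in adjacent generators to form wandering circles, which are then sorted by essentiality in the annulus.

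There is, however, a terminological reversal you should fix: in the paper's conventions (see the definition following Figure~\ref{longshort} and the proof of Sublemma~\ref{sublemma3}), a \emph{negative} syllable A-resolves to the \emph{long} resolution (vertical A-segments, producing the $|r_i|-1$ small inner circles), while a \emph{positive} syllable A-resolves to the \emph{short} resolution (two parallel arcs joined by horizontal A-segments). You have these swapped throughout your first paragraph, and you write ``$r_i-1$ nested small circles'' where $|r_i|-1$ is meant. These are bookkeeping slips rather than structural gaps---once the labels are corrected, your three steps are precisely the paper's analysis of small inner circles, medium inner/nonwandering circles, and wandering circles, in the same order and with the same local pictures (Figure~\ref{loop}).
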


\begin{proof} First, note that Condition (2a), Condition (2b), and Condition (2c) of Lemma~\ref{setup} imply that $\beta$ cannot be positive. Because positive syllables of $\beta$ are cyclically neighbored by negative syllables, we may decompose $\beta$ as $\beta=N_{1}$ (in the case that $\beta$ is a negative braid) or as $\beta=P_{1}N_{1} \cdots P_{t}N_{t}$, where $P_{i}$ denotes a positive syllable of $\beta$ and $N_{i}$ denotes a maximal length negative induced subword of $\beta$. 

\bigskip

\noindent \underline{Small Inner Circles}: Let $\sigma_{m_{i}}^{r_{i}}$, where $r_{i} \leq -3$, denote a negative syllable. Then, except for the additional $n-2$ surrounding vertical line segments, the A-resolution of this syllable will look like the left side of Figure~\ref{longshort}. In particular, having $r_{i} \leq -3$ implies the existence of at least two small inner circles. 

\bigskip

\noindent \underline{Wandering Circles}: Let $\sigma_{m_{i}}^{r_{i}}\sigma_{m_{i+1}}^{r_{i+1}}=\sigma_{{m_i}}^{r_{i}+1}(\sigma_{m_{i}}^{-1}\sigma_{m_{i+1}}^{-1})\sigma_{m_{i+1}}^{r_{i+1}+1}$, where $r_{i} \leq -3$ and $r_{i+1} \leq -3$, denote a pair of adjacent negative syllables.

\bigskip

\underline{Case 1}: Suppose $m_{i+1}=m_{i}-1$ or $m_{i+1}=m_{i}+1$. Then the pair of adjacent negative syllables involve adjacent generators of $B_{n}$. Consequently, the corresponding portion of $H_{A}$ will resemble (up to reflection and except for the additional surrounding vertical line segments) the right side of Figure~\ref{loop}. The key feature of this figure is the fact that we see a portion of a wandering circle, where the wandering behavior corresponds to the existence of the $\sigma_{m_{i}}^{-1}\sigma_{m_{i+1}}^{-1}$ induced subword. 

\bigskip

\underline{Case 2}: Suppose $\left|m_{i+1}-m_{i}\right| \geq 2$. In this case, the pair of adjacent negative syllables involve far commuting generators of $B_{n}$. Then, except for the additional $n-4$ surrounding vertical line segments, we get that the A-resolution will look like two copies of the left side of Figure~\ref{longshort}. Thus, we return to the case of small inner circles. 

\bigskip

\noindent \underline{Medium Inner Circles and Nonwandering Circles}: Let $\sigma_{m_{i}}^{r_{i}}$, where $r_{i}>0$, denote a positive syllable of $\beta$ that is cyclically neighbored on both sides by negative syllables as in Remark~\ref{cond}. Then the A-resolution of the induced subword will resemble either the left side (up to reflection and except for the additional surrounding vertical line segments) or the center (except for the additional surrounding vertical line segments) of Figure~\ref{loop}. In particular, the existence of an isolated positive syllable corresponds to the existence of one or two medium inner circles, one in the case of Condition~(2a) and Condition~(2c) and two in the case of Condition~(2b). Furthermore, a nonwandering circle will occur in $H_{A}$ precisely when all syllables in the generator $\sigma_{1}$ are positive (see the left side of Figure~\ref{loop}) or when all syllables in the generator $\sigma_{n-1}$ are positive (which would give a reflection of the left side of Figure~\ref{loop}).
 
\bigskip

Note that we may classify the wandering circles in the annulus into essential and non-essential circles. Since the A-resolutions of all portions of the closure of $\beta=N_{1}$ and $\beta=P_{1}N_{1} \cdots P_{t}N_{t}$ have been considered locally and since gluing such portions together joins wandering and potential nonwandering circle portions together to form wandering and nonwandering circles, then we have the desired classification of all-A circles.  
\end{proof}

\begin{sublemma} \label{sublemma2} Let $D(K)=\widehat{\beta}$ satisfy the assumptions of Lemma~\ref{setup}. Then $D(K)$ is connected, prime, and contains $t(D) \geq 2(n-1)$ twist regions.    
\end{sublemma}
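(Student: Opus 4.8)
The plan is to verify the three assertions — connectedness, primeness, and the twist-number bound — essentially directly from the structure of a nice braid and the categorization of all-A circles established in Sublemma~\ref{sublemma1}. For connectedness, I would argue that the closure $\widehat{\beta}$ of any braid $\beta \in B_n$ is automatically a connected diagram, since each braid generator $\sigma_i$ (or its inverse) links strand $i$ to strand $i+1$, and the hypothesis that $\beta$ is nice — in particular that it contains a complete induced subword — guarantees that every generator $\sigma_1,\dots,\sigma_{n-1}$ appears, so the $n$ braid strands together with the closure arcs form a single connected component in the plane projection. Concretely, build the ``strand adjacency graph'' on vertices $\{1,\dots,n\}$ with an edge $\{i,i+1\}$ for each generator $\sigma_i$ occurring in $\beta$; completeness makes this graph the full path, hence connected, which forces the diagram to be connected.

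For primeness, I would show there is no simple closed curve in the projection sphere meeting $D(K)$ transversely in exactly two points with link crossings on both sides. The key is that $\beta$ is cyclically reduced into syllables and nice, so it contains \emph{two disjoint induced complete subwords}; in particular each generator $\sigma_i$ appears at least twice, and by Condition~(1) each negative syllable has exponent $\le -3$ while by Conditions~(2a)–(2c) each positive syllable is flanked by (and hence accompanied by) such long negative syllables. This rules out the degenerate configurations — a single crossing separating the diagram, or a ``connect-sum'' decomposition along a twist region — because cutting along any two-point curve would have to split the braid word into two pieces each missing some generator, contradicting the presence of two disjoint complete induced subwords. I expect to phrase this as: any prime-decomposing sphere would induce a partition of the cyclic word $\beta$ into two arcs, one of which is an induced subword omitting at least one $\sigma_i$, but disjoint completeness means every proper induced subword's complement is still complete, a contradiction. (Alternatively, one can invoke that a connected closed-braid diagram on $n\ge 2$ strands in which every generator appears at least twice and which is reduced is prime, a standard fact about closed braids.)

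For the twist-number bound $t(D) \ge 2(n-1)$, the plan is to count twist regions generator-by-generator. Since $\beta$ is cyclically reduced into syllables, consecutive syllables use distinct generators, so each syllable $\sigma_{m_i}^{r_i}$ contributes (at least) one twist region — a string of $|r_i|-1$ bigons if $|r_i|\ge 2$, or a lone crossing if $|r_i|=1$ — and no two syllables can merge into a common twist region because a twist region is a maximal bigon string and distinct syllables are separated in the braid by crossings in other generators (this uses that far-commuting syllables, even in the same generator, are still separated cyclically by syllables $\sigma_{m_i\pm 1}$, which the reduced-into-syllables hypothesis together with niceness ensures). By niceness, each generator $\sigma_1,\dots,\sigma_{n-1}$ occurs in at least two distinct syllables of $\beta$ — this is exactly the content of having two disjoint induced complete subwords — so there are at least $2(n-1)$ syllables, hence at least $2(n-1)$ twist regions. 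The main obstacle I anticipate is the primeness argument: connectedness and the twist count are bookkeeping, but ruling out \emph{all} two-point decomposing spheres cleanly requires care — in particular handling spheres that run ``around the annulus'' as opposed to ones localized near a crossing — and I would likely need to lean on the annular embedding of $D(K)$ together with the fact that the two disjoint complete subwords sit in genuinely different parts of the cyclic word, so no single two-point curve can separate them from the rest.
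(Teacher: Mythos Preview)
Your plan for connectedness and for the twist bound is exactly what the paper does: completeness of $\beta$ links every pair of adjacent strands, and the two disjoint induced complete subwords force each generator $\sigma_i$ into at least two distinct syllables, whence $t(D)\ge 2(n-1)$. The paper, like you, simply identifies syllables of the cyclically reduced word with twist regions of $D(K)$; your added remark that far-commuting syllables in the same generator are always separated by an adjacent-generator syllable is not in fact guaranteed by the hypotheses, but the paper does not address this point either.

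For primeness, however, your combinatorial phrasing has a real gap, and the paper proceeds differently. The claim ``disjoint completeness means every proper induced subword's complement is still complete'' is false --- the two complete subwords could sit adjacently in $\beta$, leaving a long incomplete complementary arc --- and more basically a two-point curve meeting the diagram does not naturally induce a partition of the braid \emph{word} into two induced subwords. The paper instead runs precisely the geometric case analysis you anticipate in your final paragraph: pick a point $p$ on the hypothetical curve $C$; either (Case~1) $p$ lies outside the annular region, forcing $C$ to meet $D(K)$ twice on strand~$1$ or twice on strand~$n$, where the occurrences of $\sigma_1$ (resp.\ $\sigma_{n-1}$) prevent $C$ from closing up with crossings on both sides; or (Case~2) $p$ lies between strands $i$ and $i+1$, forcing $C$ to meet one of those two strands twice, where the fact that each of $\sigma_{i-1}$ and $\sigma_i$ occurs both above and below $p$ --- one occurrence drawn from each of the two disjoint complete subwords --- again obstructs $C$. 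Your instinct to lean on the annular embedding is exactly right; drop the word-partition formulation in favor of this direct positional analysis.
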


\begin{proof} Recall that $D(K)$ is connected if and only if the projection graph of $D(K)$ is path-connected. Since $\beta \in B_{n}$ contains a complete subword, then each generator of $B_{n}$ (each of which corresponds to a crossing of $D(K)$ between adjacent braid string portions) must occur at least once. This fact implies that the closed $n$-braid diagram $D(K)$ must be connected. 

Since $\beta$ contains two disjoint complete subwords, then it must be that each of the generators $\sigma_{1}, \ldots, \sigma_{n-1}$ of $B_{n}$ must occur at least twice in (two distinct syllables of) the braid word. Since the syllables of the cyclically reduced braid word $\beta$ correspond to the twist regions of $D(K)$, then we have that $t(D) \geq 2(n-1)$. 

Let $C$ be a simple closed curve that intersects $D(K)=\widehat{\beta}$ exactly twice (away from the crossings) and contains crossings on both sides. To show that $D(K)$ is prime, we need to show that such a curve $C$ cannot exist. Recall that $D(K)$ lies in an annular region of the plane. 

%

\bigskip

\noindent \underline{Case 1}: Suppose $C$ contains a point $p$ outside of this annular region. Since $C$ only intersects $D(K)$ twice and must start and end at $p$, then $C$ must intersect $D(K)$ twice in braid string position $1$ or twice in braid string position $n$. This cannot happen because the crossings corresponding to the occurrences of $\sigma_{1}$ and $\sigma_{n-1}$ prevent $C$ from being able to close up in a way that will contain crossings on both sides. 

\bigskip

\noindent \underline{Case 2}: Suppose $C$ contains a point $p$ between braid string positions $i$ and $i+1$ for some $1 \leq i \leq n-1$. See Figure~\ref{primeclose}. Since $C$ only intersects $D(K)$ twice and must start and end at $p$, then $C$ must intersect $D(K)$ twice in braid string position $i$ or twice in braid string position $i+1$. Suppose, as in Figure~\ref{primeclose}, that $C$ intersects $D(K)$ twice in braid string position $i$. The case for braid string position $i+1$ is very similar. If $i=1$, then $C$ contains a point outside of the annular region and we return to Case 1. Suppose $2 \leq i  \leq n-1$. Since $\beta$ contains two disjoint induced complete subwords, then the generator $\sigma_{i-1}$ and the generator $\sigma_{i}$ must each occur at least twice, once above $p$ and once below $p$. These occurrences prevent $C$ from being able to close up in a way that will contain crossings on both sides. Thus, this case cannot occur. 
\end{proof}

\begin{figure}
	\centering
	  \def\svgwidth{1.5in}
		  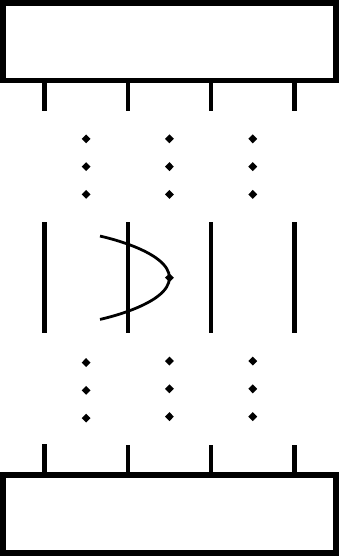
	\caption{A simple closed curve $C$ intersecting $D(K)$ twice and containing a point $p$ between braid string positions $i$ and $i+1$. Each box in the figure represents the eventual occurrence of the generator $\sigma_{i}$, the generator $\sigma_{i-1}$ (if it exists), and the generator $\sigma_{i+1}$ (if it exists).}
	\label{primeclose}
\end{figure}

\begin{sublemma} \label{sublemma3} Let $D(K)=\widehat{\beta}$ satisfy the assumptions of Lemma~\ref{setup}. Then $D(K)$ is A-adequate.  
\end{sublemma}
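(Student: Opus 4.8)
The plan is to verify directly that the all-$A$ state $H_A$ of $D(K) = \widehat\beta$ contains no $A$-segment joining an all-$A$ circle to itself, using the classification of all-$A$ circles established in Sublemma~\ref{sublemma1}. Each $A$-segment records a crossing of $D(K)$, and each crossing belongs to a syllable $\sigma_{m_i}^{r_i}$ of $\beta$. So I would organize the argument by \emph{which syllable the $A$-segment comes from} and then \emph{which two all-$A$ circles sit at its two ends}, showing in every case that these two circles are distinct.

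First I would treat the $A$-segments arising inside a negative syllable $\sigma_{m_i}^{r_i}$ with $r_i \le -3$. Locally (cf.\ the left side of Figure~\ref{longshort}) this is a long resolution: the $|r_i|$ crossings produce $|r_i|-1 \ge 2$ small inner circles stacked between two ``outer'' circle arcs, and each $A$-segment joins two \emph{consecutive} circles in this stack (never a circle to itself). The condition $r_i \le -3$ from Lemma~\ref{setup}(1) is exactly what guarantees there are at least two distinct small inner circles here, so no self-loop appears; the segments connecting the top and bottom of the stack go to circles determined by the neighboring syllables, which are distinct from the small inner circles by construction. Next I would treat the $A$-segment arising from a transition subword $\sigma_{m_i}^{-1}\sigma_{m_{i+1}}^{-1}$ between two adjacent negative syllables. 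When $|m_{i+1}-m_i|\ge 2$ (far commuting), this reduces to the small-inner-circle picture again (Case 2 of Sublemma~\ref{sublemma1}). When $m_{i+1}=m_i\pm 1$ (Case 1, the right side of Figure~\ref{loop}), the relevant $A$-segment lies on a wandering circle, and its two endpoints lie on two \emph{different} wandering-circle arcs coming from the two different generators $\sigma_{m_i}$ and $\sigma_{m_{i+1}}$; since those arcs belong to distinct circles (a single wandering circle meets each generator's column in a different strand), there is no self-loop.

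Then I would handle the $A$-segments at a cyclically isolated positive syllable $\sigma_{m_i}^{r_i}$, $r_i>0$. Here I use the full strength of Conditions (2a)--(2c): a positive syllable is flanked on both cyclic sides by negative syllables in the neighboring generator(s), each with exponent $\le -3$. The local picture is the left or center of Figure~\ref{loop}: the $r_i$ crossings of the positive syllable, together with its short-resolution-type smoothing, bound one medium inner circle (in cases 2a, 2c) or two medium inner circles (case 2b), and every $A$-segment emanating from this syllable runs from a medium inner circle to one of the flanking small inner circles or to a wandering/nonwandering circle. These are different types of circles, hence distinct. I would similarly note that nonwandering circles (arising when all syllables in $\sigma_1$, or all in $\sigma_{n-1}$, are positive) meet only $A$-segments going to medium inner circles. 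Finally I would observe that gluing the local pieces around the closed braid never identifies the two ends of an $A$-segment, because an $A$-segment is a purely local object: its two endpoints are determined within a single syllable (or a single adjacent pair), and in each case above they lie on circles of incompatible type or on consecutive circles in a stack of size $\ge 2$.

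The main obstacle I expect is bookkeeping rather than any genuine difficulty: one has to be careful that the ``$n-2$ surrounding vertical line segments'' (the braid strands not involved in the syllable) do not secretly connect a circle back to itself, and that when two adjacent syllables are in the \emph{same} generator this is already excluded by $\beta$ being cyclically reduced into syllables (condition (3) of that definition), while two adjacent syllables in adjacent generators are covered by the wandering-circle analysis. Getting a clean case division — negative syllable / far-commuting transition / adjacent-generator transition / isolated positive syllable / global gluing — and citing Figure~\ref{longshort} and Figure~\ref{loop} for the local pictures should make each case a one- or two-line check.
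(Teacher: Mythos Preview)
Your approach is essentially the paper's: use the circle classification from Sublemma~\ref{sublemma1} to verify that every $A$-segment has endpoints on distinct circles. The paper, however, organizes this into just two cases by segment \emph{orientation} rather than by syllable: vertical $A$-segments (from negative syllables) always have a small inner circle at one end by Condition~(1), and horizontal $A$-segments (from positive syllables) always have a medium inner circle at one end, the other end being a medium inner, wandering, or nonwandering circle. Your separate ``transition'' case is unnecessary---those are just the outermost vertical segments of a negative syllable and are already covered---and your description of that case (``the $A$-segment lies on a wandering circle \dots those arcs belong to distinct circles'') is muddled: one endpoint is simply a small inner circle.

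There is one genuine inaccuracy in your positive-syllable analysis. Horizontal $A$-segments from a positive syllable never meet small inner circles; in case~(2b) they connect the two medium inner circles to \emph{each other} (see the center of Figure~\ref{loop}). So your stated reason ``these are different types of circles, hence distinct'' does not actually apply in case~(2b). The conclusion survives because the two medium inner circles are visibly distinct, but you should say that rather than appeal to a type mismatch that is not there.
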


\begin{proof} To prove that $D(K)$ is A-adequate, we need to show that no A-segment of the all-A state $H_{A}$ joins an all-A circle to itself. Note that positive syllables in $\beta$ (which include the short twist regions of $D(K)$) A-resolve to give horizontal A-segments and that negative syllables in $\beta$ (which include the long twist regions of $D(K)$) A-resolve to give vertical A-segments. Suppose, for a contradiction, that an A-segment joins an all-A circle to itself. 

\bigskip

\noindent \underline{Case 1}: Suppose the A-segment is a vertical segment. Condition (1) (that negative exponents are at least three in absolute value) implies that it is impossible for a vertical A-segment to join an all-A circle to itself. This is because all vertical A-segments either join distinct small inner circles or join a small inner circle to a medium inner circle or a wandering circle. (See the left side of Figure~\ref{longshort}.)

\bigskip  

\noindent \underline{Case 2}: Suppose the A-segment is a horizontal segment. By Remark~\ref{cond} and Figure~\ref{loop}, it can be seen that it is impossible for a horizontal A-segment to join an all-A circle to itself. This is because all horizontal A-segments join a medium inner circle either to another medium inner circle, to a wandering circle, or to a nonwandering circle. 
\end{proof}

\begin{sublemma} \label{sublemma4} Let $D(K)=\widehat{\beta}$ satisfy the assumptions of Lemma~\ref{setup}. Then $D(K)$ satisfies the TELC.
\end{sublemma}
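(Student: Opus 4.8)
The plan is to unfold the definition of the TELC directly. Suppose two distinct all-$A$ circles $c$ and $c'$ of $H_{A}$ are joined by two $A$-segments $e$ and $e'$ (they are distinct because $D(K)$ is $A$-adequate by Sublemma~\ref{sublemma3}); I must show that $e$ and $e'$ come from one and the same short twist region of $D(K)$, i.e.\ from one positive syllable of $\beta$ with at least two crossings. The proof will use only the local structure of $H_{A}$ already extracted in Sublemmas~\ref{sublemma1} and~\ref{sublemma3}: each vertical $A$-segment comes from a negative syllable and has at least one endpoint on a small inner circle (joining it to another small inner circle, or to a medium inner or wandering circle); each horizontal $A$-segment comes from a positive syllable and has at least one endpoint on a medium inner circle (joining it to another medium inner circle, a wandering circle, or a nonwandering circle); the $A$-resolution of a negative syllable $\sigma_{m_{i}}^{r_{i}}$ with $r_{i}\le-3$ is a chain of $|r_{i}|-1\ge 2$ small inner circles, so each such small inner circle meets exactly two $A$-segments, namely the edges to its two neighbours in that chain; and all $r_{i}$ horizontal $A$-segments of a positive syllable $\sigma_{m_{i}}^{r_{i}}$ join one common pair of circles (see Figure~\ref{loop}), while Conditions~(2a)--(2c), which surround every positive syllable by negative ones, guarantee that medium inner circles produced by distinct positive syllables are distinct and that no horizontal $A$-segment joins medium inner circles belonging to two different positive syllables.

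First I would dispose of the case in which $e$ and $e'$ are both vertical. Each of them has an endpoint on a small inner circle, so (after relabelling) $c$ is a small inner circle, lying in the chain of a single negative twist region $T$; then $e$ and $e'$ are precisely the two $A$-segments incident to $c$, and they join $c$ to its two neighbours in that chain. Since the exponent of $T$ has absolute value at least $3$ (this is exactly where Condition~(1) enters: when the absolute value is $2$ the one-circle chain could have both of its $A$-segments running to a single outer circle), these two neighbours are distinct circles, contradicting the assumption that $e$ and $e'$ both join $c$ to $c'$. Next I would rule out the mixed case, say $e$ horizontal and $e'$ vertical: then one of $c,c'$ must be a medium inner circle (because of $e$) and one must be a small inner circle (because of $e'$), and since these circle types are disjoint we may take $c'$ small inner and $c$ medium inner; but then $e$ is a horizontal $A$-segment joining the medium inner circle $c$ to the small inner circle $c'$, which is not among the circle types a horizontal $A$-segment is allowed to reach. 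Hence $e$ and $e'$ are both horizontal.

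In the remaining case both $e$ and $e'$ are horizontal, so each of them meets a medium inner circle; in particular $c$ and $c'$ are not both wandering or nonwandering, so at least one of them, say $c$, is a medium inner circle. The circle $c$ is produced by a unique positive syllable $\sigma_{m_{i}}^{r_{i}}$ of $\beta$, and since no horizontal $A$-segment joins $c$ to a medium inner circle of any other positive syllable, both $e$ and $e'$ must come from $\sigma_{m_{i}}^{r_{i}}$. A positive syllable carrying two $A$-segments has $r_{i}\ge 2$, so $\sigma_{m_{i}}^{r_{i}}$ is a short twist region of $D(K)$ and $e,e'$ arise from it, which is exactly the TELC.

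The part of this that is not bookkeeping is the collection of local structural statements in the first paragraph: that a long twist region resolves to a \emph{chain} of small inner circles in which successive $A$-segments join consecutive circles, that all the horizontal $A$-segments of an isolated positive syllable run between a single pair of circles (one of which is always a medium inner circle produced by that syllable), and that Conditions~(2a)--(2c) push positive syllables far enough apart that these medium inner circles never interact. These are read off Figures~\ref{longshort} and~\ref{loop} exactly as in Sublemmas~\ref{sublemma1} and~\ref{sublemma3}; once they are recorded, the case analysis above is routine, so I expect the figure-level verification of those local structures to be the only real obstacle.
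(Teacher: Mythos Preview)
Your proof is correct and follows essentially the same approach as the paper's. The only difference is organizational: you split into three cases (both vertical, mixed, both horizontal), whereas the paper uses two cases (at least one horizontal, both vertical) and absorbs your mixed case into the first by arguing that once $C_1$ is identified as a medium inner circle, its vertical neighbours are small inner circles and hence cannot be $C_2$; the underlying local facts invoked are identical.
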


\begin{proof} Let $C_{1}$ and $C_{2}$ be two distinct all-A circles that share a pair of distinct A-segments, call them $s_{1}$ and $s_{2}$. To prove that $D(K)$ satisfies the TELC, we need to show that these A-segments correspond to crossings from the same short twist region of $D(K)$.

\bigskip

\noindent \underline{Case 1}: Suppose one of $s_{1}$ and $s_{2}$ is a horizontal segment (that must come from a cyclically isolated positive syllable of $\beta$). Then it is impossible for the other segment to be a vertical segment. This is because one of $C_{1}$ and $C_{2}$, say $C_{1}$, must be a medium inner circle. This circle is adjacent, via horizontal A-segments, to the second circle $C_{2}$, which will either be another medium inner circle, a wandering circle, or a nonwandering circle. By Condition~(1), the circle $C_{1}$ is also adjacent to small inner circles above and below. Consequently, the second segment cannot be vertical. Thus, if one of the segments $s_{1}$ and $s_{2}$ is horizontal, then the other segment must also be horizontal. Since the horizontal segments incident to the medium inner circle $C_{1}$ necessarily belong to the same resolution of a short twist region of $D(K)$, then the TELC is satisfied. (See the left side and center of of Figure~\ref{loop}.) 

\bigskip

\noindent \underline{Case 2}: Suppose both $s_{1}$ and $s_{2}$ are vertical segments. Since all vertical A-segments either join distinct small inner circles or join a small inner circle to a medium inner circle or a wandering circle, then one of $C_{1}$ and $C_{2}$ must be a small inner circle. By Condition~(1), it is impossible for a small inner circle to share more than one A-segment with another all-A circle. (See the left side of Figure~\ref{longshort}.) Therefore, this case cannot occur.
\end{proof}

We now combine all of the necessary ingredients in order to prove the Main Lemma. 

\begin{proof}[Proof of Lemma~\ref{setup} (The Main Lemma)] By applying Sublemma~\ref{sublemma1}, Sublemma~\ref{sublemma2}, Sublemma~\ref{sublemma3}, and Sublemma~\ref{sublemma4}, we get every conclusion of Lemma~\ref{setup} except the hyperbolicity of $K$, which follows from Theorem~\ref{Cor}. 
\end{proof}

\subsection{Diagrammatic volume bounds}

In this section, we apply Lemma~\ref{setup} and Theorem~\ref{Cor} to produce diagrammatic volume bounds for the complements of the closed $n$-braids considered in this paper. We now begin with a lemma for the case of 3-braids. 

%

\begin{lemma} Let $D(K)=\widehat{\beta}$ denote the closure of a nice 3-braid $\beta \in B_{3}$. Furthermore, assume that the positive syllables of $\beta$ are cyclically neighbored on both sides by negative syllables. Then the all-A state $H_{A}$ of $D(K)$ satisfies precisely one of the following conditions:
\label{mylemma2}
\begin{enumerate}
\item[(1)] $H_{A}$ contains exactly one nonwandering circle and no wandering circles.
\item[(2)] $H_{A}$ contains exactly one wandering circle and no nonwandering circles. 
\end{enumerate}
\end{lemma}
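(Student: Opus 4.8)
The plan is to build on the classification of the all-A circles established in Sublemma~\ref{sublemma1} and then to pin down the exact numbers of nonwandering and wandering circles under the (weaker) hypotheses here. First I would record the combinatorial structure forced by the hypotheses. Since $\beta \in B_3$ is cyclically reduced into syllables, its syllables cyclically alternate between $\sigma_1$ and $\sigma_2$, so we may write $\beta = \sigma_{m_1}^{r_1}\cdots\sigma_{m_{2l}}^{r_{2l}}$ with the $m_k$ alternating and $l \ge 2$ (niceness). Because $\beta$ must contain a negative syllable --- otherwise no positive syllable could be cyclically flanked by negative syllables --- combining the flanking hypothesis with the syllable alternation yields the trichotomy: \textbf{(a)} every $\sigma_1$-syllable is positive, in which case every $\sigma_2$-syllable is negative (the two cyclic neighbors of a $\sigma_1$-syllable are $\sigma_2$-syllables, and they flank a positive syllable); \textbf{(b)} symmetrically, every $\sigma_2$-syllable is positive and every $\sigma_1$-syllable is negative; or \textbf{(c)} at least one $\sigma_1$-syllable and at least one $\sigma_2$-syllable is negative. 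These three cases are mutually exclusive (cases (a) and (b) cannot both hold, since then $\beta$ would be positive), and (c) is precisely the case in which two cyclically adjacent syllables are both negative: in case (c) the signs cannot strictly alternate around the cyclic word (strict sign-alternation would force (a) or (b)), so some two cyclically adjacent syllables have the same sign, and by the flanking hypothesis that sign must be negative; conversely, in cases (a) and (b) no two cyclically adjacent syllables are both negative, since consecutive syllables lie in different generators and one of the two generators contributes only positive syllables.

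Next I would invoke the classification of Sublemma~\ref{sublemma1} of the all-A circles into small inner, medium inner, essential wandering, non-essential wandering, and nonwandering circles, noting that the portion of that argument which produces the classification uses only that $\beta$ is cyclically reduced into syllables with positive syllables cyclically flanked by negative syllables, not the lower bounds on negative exponents from Condition~(1) of Lemma~\ref{setup}. Two facts from that classification are decisive: nonwandering circles occur if and only if all $\sigma_1$-syllables are positive or all $\sigma_2$-syllables are positive, i.e.\ exactly in cases (a) and (b); and wandering circles (essential or non-essential) occur only where two cyclically adjacent negative syllables meet, i.e.\ exactly in case (c). Thus in cases (a)/(b) there are no wandering circles but at least one nonwandering circle, while in case (c) there are no nonwandering circles but at least one wandering circle; in particular $H_A$ satisfies at most one of the two conditions, and it remains only to upgrade ``at least one'' to ``exactly one'' in each case.

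For the counting, I would trace how the all-A arcs close up around the braid closure, using that between consecutive syllables the state meets a cross-sectional arc of the annulus in exactly three points (the braid positions $1,2,3$): a positive syllable passes all three straight through; a negative $\sigma_1$-syllable caps positions $1$ and $2$ at both its top and its bottom and contributes only small inner circles in its interior, and a negative $\sigma_2$-syllable likewise caps positions $2$ and $3$. In cases (a)/(b) the relevant outer strand (position $1$ in case (a), position $3$ in case (b)) is never capped, so it closes up into a single circle running once around the annulus --- this is the unique nonwandering circle (and it is essential) --- while every other non-small-inner circle is a medium inner circle pinched off by an isolated positive syllable together with its two negative neighbors; hence condition (1) holds with exactly one nonwandering circle. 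In case (c), I would show that all of the all-A arcs not lying on a pinched-off medium inner circle belong to a single circle, by following that circle through the syllables of $\beta$ and using that the presence of negative syllables in both generators prevents positions $1$ and $3$ from closing up on their own; by elimination this single circle is the unique wandering circle (and it is essential, winding once around the annulus), so condition (2) holds.

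The step I expect to be the main obstacle is this last one: verifying in case (c) that the wandering portions created at the various cyclically adjacent negative pairs, together with the intervening strand arcs, assemble into exactly one circle rather than several. I would treat this by a careful arc-by-arc trace through the braid word, organized around the decomposition $\beta = P_1 N_1 \cdots P_t N_t$ from the proof of Sublemma~\ref{sublemma1} (or $\beta = N_1$ if $\beta$ is negative), checking that each medium inner circle is accounted for by exactly one isolated positive syllable (its neighboring caps, closed up through the straight-through positive syllable) and that every remaining arc is forced into a single component.
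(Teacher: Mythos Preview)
Your approach is essentially the paper's: your trichotomy (a)/(b) versus (c) is exactly the paper's split into the alternating and nonalternating cases, and your use of the Sublemma~\ref{sublemma1} classification to see when nonwandering versus wandering circles appear matches the paper's reasoning. The one place you diverge is the uniqueness argument in case~(c): you plan a syllable-by-syllable arc trace, which would work but is more labor than needed. The paper instead observes that for $n=3$ every wandering circle is essential in the annulus (it must pass from braid position~$1$ to position~$3$ and back, hence winds once around), and since an essential circle ``uses up'' a braid string at every cross-section, there is no room for a second wandering circle. This topological shortcut replaces your arc trace entirely and is the only substantive simplification the paper's proof offers over yours.
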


\begin{proof} A 3-braid $\beta \in B_{3}$ is either alternating or nonalternating. 

\bigskip

\noindent \underline{Case 1}: Suppose $\beta$ is alternating. Then one of the braid generators must always occur with positive exponents and the other generator must always occur with negative exponents. Hence, by the proof of Sublemma~\ref{sublemma1}, since a generator occurs with only positive exponents, then there will be a nonwandering circle. Since only one generator occurs with only positive exponents, then there is only one such nonwandering circle. Also, since adjacent negative syllables cannot occur in $\beta$, then wandering circles cannot occur in $H_{A}$. 

\bigskip

\noindent \underline{Case 2}: Suppose $\beta$ is nonalternating. Since positive syllables are cyclically neighbored on both sides by negative syllables, then the nonalternating behavior of $\beta$ must come from a pair of adjacent negative syllables. Hence, by the proof of Sublemma~\ref{sublemma1}, this implies the existence of a wandering circle in $H_{A}$ and prevents the existence of a nonwandering circle (since both generators occur once with negative exponent). Finally, since a wandering circle (which is always essential in the case that $n=3$) ``uses up'' a braid string from the braid closure and since a wandering circle must wander from braid string position 1 to braid string position 3 and back before closing up, then the existence of a second such wandering circle is impossible. 

\end{proof}

\begin{definition}
Let $D(K)=\widehat{\beta}$ be the closure of an $n$-braid $\beta \in B_{n}$. With the sign conventions for braid generators given in Figure~\ref{braidgen}, we call a twist region \emph{positive} if its crossings correspond to a positive syllable in the braid word $\beta$ and \emph{negative} if its crossings correspond to a negative syllable in the braid word $\beta$. Let $t^{+}(D)$ denote the number of positive twist regions in $D(K)$ and let $t^{-}(D)$ denote the number of negative twist regions in $D(K)$.
\end{definition}

The following theorem, which is the first main result of this paper, is a more precise version of Theorem~\ref{introthm} from the introduction. 

\begin{theorem} \label{mytheorem} Let $D(K)=\widehat{\beta}$, where $\beta \in B_{n}$ satisfies the assumptions of Lemma~\ref{setup}. Let $m$ denote the number of non-essential wandering circles in the all-A state $H_{A}$ of $D(K)$. In the case that $n=3$, we have that
$$-\chi(\mathbb{G}_{A}')=t^{-}(D)-1 \geq \frac{1}{2}\cdot (t(D)-2),$$
\noindent which gives the volume bounds
$$\frac{v_{8}}{2}\cdot(t(D)-2) \leq v_{8}\cdot(t^{-}(D)-1) \leq \mathrm{vol}(S^{3}\backslash K) < 10v_{3}\cdot(t(D)-1).$$
In the case that $n\geq4$ and that the only positive syllables of $\beta$ possibly occur in the generators $\sigma_{1}$ and $\sigma_{n-1}$, we have the volume bounds
$$\frac{v_{8}}{2}\cdot(t(D)-2(n+m-2)) \leq v_{8}\cdot(t^{-}(D)-(n+m-2)) \leq \mathrm{vol}(S^{3}\backslash K) < 10v_{3}\cdot(t(D)-1).$$
In the case that $n\geq4$ and that positive syllables of $\beta$ occur in a generator $\sigma_{i}$ for some $2 \leq i \leq n-2$, we have the volume bounds
$$v_{8}\cdot(t^{-}(D)-t^{+}(D)-(n+m-2)) \leq \mathrm{vol}(S^{3}\backslash K) < 10v_{3}\cdot(t(D)-1) = 10v_{3}\cdot(t^{-}(D)+t^{+}(D)-1).$$
\end{theorem}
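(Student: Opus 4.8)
The plan is to reduce everything to Theorem~\ref{Cor} together with an exact computation of $-\chi(\mathbb{G}_A')$. By Lemma~\ref{setup}, $D(K)$ is connected, prime, A-adequate, satisfies the TELC, and has $t(D)\geq 2(n-1)\geq 2$, so Theorem~\ref{Cor} applies and gives $-v_8\chi(\mathbb{G}_A')\leq\mathrm{vol}(S^3\setminus K)<10v_3(t(D)-1)$. Since each twist region of $\widehat\beta$ is a single syllable, it is either positive or negative, so $t(D)=t^+(D)+t^-(D)$; this yields all three stated upper bounds at once. Everything that remains is the lower bounds, i.e.\ the (in)equalities relating $-\chi(\mathbb{G}_A')$ to $t^-(D)$, $t^+(D)$, $n$, and $m$.

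For those I would count the vertices and edges of $\mathbb{G}_A'$ from the local resolution pictures of Sublemma~\ref{sublemma1} together with A-adequacy (Sublemma~\ref{sublemma3}) and the TELC (Sublemma~\ref{sublemma4}). Writing $C^-$ for the number of negative crossings, the vertex count (the number of all-A circles) is $v(\mathbb{G}_A') = (C^- - t^-(D)) + \mathrm{med} + m + E$, where: each negative syllable $\sigma_{m_i}^{r_i}$ gives $|r_i|-1$ small inner circles, hence $C^--t^-(D)$ of them in total; $\mathrm{med}$ counts the medium inner circles, which is one per positive syllable in $\sigma_1$ or $\sigma_{n-1}$ and two per positive syllable in an interior generator $\sigma_2,\dots,\sigma_{n-2}$, so $\mathrm{med}=t^+(D)+t^+_{\mathrm{int}}(D)$ with $t^+_{\mathrm{int}}(D)$ the number of such interior positive syllables; $m$ is the number of non-essential wandering circles by definition; and $E$ is the number of essential wandering circles plus nonwandering circles, which I would verify equals $n-2$. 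For the edges: inside a negative syllable the $|r_i|$ vertical A-segments form a path through its small inner circles, and since each small inner circle has exactly two A-segments and shares at most one with any other circle (Sublemma~\ref{sublemma4}, Case~2), none of these edges are parallel, so all $C^-$ of them survive in $\mathbb{G}_A'$; inside a positive syllable all the horizontal A-segments join the same pair of circles (Figure~\ref{loop} and Sublemma~\ref{sublemma4}, Case~1), contributing exactly one edge to $\mathbb{G}_A'$, and no such edge is parallel to a negative-syllable edge (the latter always has a small inner circle as an endpoint). Hence $e(\mathbb{G}_A')=C^-+t^+(D)$.

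Combining these, the $C^-$ terms cancel and $-\chi(\mathbb{G}_A') = e(\mathbb{G}_A')-v(\mathbb{G}_A') = t^-(D)-t^+_{\mathrm{int}}(D)-(n+m-2)$. When $n=3$ there are no interior generators and, by Lemma~\ref{mylemma2}, every wandering circle is essential, so $m=0$ and $E=1$, giving $-\chi(\mathbb{G}_A')=t^-(D)-1$; when $n\geq 4$ with no interior positive syllable, $t^+_{\mathrm{int}}(D)=0$ and $-\chi(\mathbb{G}_A')=t^-(D)-(n+m-2)$; in general $t^+_{\mathrm{int}}(D)\leq t^+(D)$ gives $-\chi(\mathbb{G}_A')\geq t^-(D)-t^+(D)-(n+m-2)$. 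Finally, Conditions~(2a)--(2c) force positive syllables to be pairwise non-adjacent in the cyclic syllable sequence, so $t^+(D)\leq t^-(D)$; substituting $t(D)=t^+(D)+t^-(D)$ then turns $t^-(D)-(n+m-2)$ into $\tfrac{1}{2}(t(D)-2(n+m-2))$ and $t^-(D)-1$ into $\tfrac{1}{2}(t(D)-2)$, which, fed into Theorem~\ref{Cor}, produces all the lower bounds. The step I expect to be the genuine obstacle is the claim $E=n-2$: pinning down how the essential wandering and nonwandering circles sit in the annulus (this is where the ``$n$'' enters) and checking that interior positive syllables do not disturb this count or the edge count — it is exactly the delicacy here that forces the third case to be stated only as an inequality.
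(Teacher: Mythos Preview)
Your approach is essentially the paper's. The paper does not compute $e(\mathbb{G}_A')$ and $v(\mathbb{G}_A')$ separately; it instead invokes Lemma~3.4 of \cite{Me}, which (under the TELC) gives the exact formula $-\chi(\mathbb{G}_A')=t(D)-\#\{\text{other circles}\}$, where ``other circles'' are precisely the all-A circles that are not small inner circles. Your direct edge/vertex count is just an in-place rederivation of that formula: your $e(\mathbb{G}_A')=C^-+t^+(D)$ and $v(\mathbb{G}_A')=(C^--t^-(D))+\mathrm{med}+m+E$ combine to $-\chi(\mathbb{G}_A')=t(D)-(\mathrm{med}+m+E)$, which is exactly the cited lemma. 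From there both arguments bound the count of other circles via the circle taxonomy of Sublemma~\ref{sublemma1}.

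The one place you diverge from the paper is the claim $E=n-2$. The paper neither asserts nor needs equality; it only uses the inequality $E\leq n-2$ (its Inequality~(\ref{EWC}), justified by the observation that each essential wandering circle and each nonwandering circle occupies a braid string). That inequality is all you need as well: plugging $E\leq n-2$ into your exact formula $-\chi(\mathbb{G}_A')=t^-(D)-t^+_{\mathrm{int}}(D)-m-E$ already yields $-\chi(\mathbb{G}_A')\geq t^-(D)-t^+_{\mathrm{int}}(D)-(n+m-2)$, and the three cases of the theorem follow exactly as you wrote. So the step you flagged as ``the genuine obstacle'' is not one: drop the equality and keep the inequality, and your argument matches the paper's. (For $n=3$ the equality $E=1$ does hold and is supplied by Lemma~\ref{mylemma2}, which is why that case gets an exact formula for $-\chi(\mathbb{G}_A')$ while the $n\geq 4$ cases do not.) Your sharper count $\mathrm{med}=t^+(D)+t^+_{\mathrm{int}}(D)$ is a mild refinement of the paper's cruder $t^+(D)\leq\mathrm{med}\leq 2t^+(D)$, but it leads to the same stated bounds.
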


It is worth noting that the lower bounds on volume in terms of $t(D)$ given above are sharper than the general lower bounds given in the Main Theorem (Theorem 1.1) of \cite{Me} when $t(D)$ is large compared to the sum $n+m$. Specifically, the lower bounds are sharper when $t(D)>4$ for the case when $n=3$ (in which $m=0$ and $t(D) \geq 4=2(n-1)$) and when $t(D)>6(n+m)-16$ for the case when $n \geq 4$.  

\begin{proof}[Proof of Theorem~\ref{mytheorem}] To begin, note that we may apply the many results of Lemma~\ref{setup}. Since $D(K)$ is connected, A-adequate, and satisfies the TELC, then Lemma~3.4 of \cite{Me} implies that $$-\chi(\mathbb{G}_{A}')=t(D)-\#\left\{\text{OCs}\right\},$$ where $\#\left\{\text{OCs}\right\}$ is the number of all-A circles in $H_{A}$, called \emph{other circles}, that are not small inner circles. Note that, by Sublemma~\ref{sublemma1}, the set of other circles consists of the medium inner circles, essential wandering circles, non-essential wandering circles, and nonwandering circles in $H_{A}$. By Remark~\ref{cond} and Figure~\ref{loop}, it can be seen that $$t^{+}(D) \leq \#\left\{\text{medium\ inner\ circles}\right\} \leq 2t^{+}(D),$$ which says that each positive twist region corresponds to either one or two medium inner circles. In particular, looking at the left side (up to reflection) of Figure~\ref{loop}, it can be seen that $$\#\left\{\text{medium\ inner\ circles}\right\} = t^{+}(D)$$ in the case that $n=3$ and in the case that $n \geq 4$ when the only positive syllables of $\beta$ possibly occur in the generators $\sigma_{1}$ and $\sigma_{n-1}$. Additionally, Condition (2a), Condition (2b), and Condition (2c) of Lemma~\ref{setup} imply that at least half of the twist regions in $D(K)$ must be negative twist regions. 

\bigskip

\noindent \underline{Case 1}: Suppose $n=3$. By Lemma~\ref{mylemma2}, we know that the total number of wandering circles and nonwandering circles is one. Therefore, using all of what was said above, we get
\begin{eqnarray*}-\chi(\mathbb{G}_{A}') & = & t(D)-\#\left\{\text{OCs}\right\}\\
\ & = & t(D)-\#\left\{\text{medium\ inner\ circles}\right\}-\#\left\{\text{wandering\ circles}\right\}\\
\ & \ & \hspace{2in} -\#\left\{\text{nonwandering\ circles}\right\}\\
\ & = & t(D)-t^{+}(D)-1\\
\ & = & t^{-}(D)-1\\
\ & \geq & \frac{t(D)}{2}-1\\
\ & = & \frac{1}{2}\cdot (t(D)-2).
\end{eqnarray*} 

\noindent \underline{Case 2}:  Suppose $n\geq4$. Note that both essential wandering circles and nonwandering circles ``use up'' a braid string from the braid closure. Also, note that Lemma~\ref{setup} implies that there can be at most two nonwandering circles in the all-A state $H_{A}$. These two facts imply that

\begin{equation}\label{EWC}
\#\left\{\text{essential\ wandering\ circles}\right\}+\#\left\{\text{nonwandering\ circles}\right\} \leq n-2.
\end{equation}

\bigskip

\noindent \underline{Subcase 1}: Suppose that the only positive syllables of $\beta$ possibly occur in the generators $\sigma_{1}$ and $\sigma_{n-1}$, in which case we have that $\#\left\{\text{medium\ inner\ circles}\right\} = t^{+}(D)$. Then, by using Inequality~\ref{EWC} and what was said in the beginning of the proof, we get
\begin{eqnarray*}-\chi(\mathbb{G}_{A}') & = & t(D)-\#\left\{\text{OCs}\right\}\\
\ & = & t(D)-\#\left\{\text{medium\ inner\ circles}\right\}-\#\left\{\text{non-essential\ wandering\ circles}\right\}\\
\ & \ & \hspace{.318in} -\#\left\{\text{essential\ wandering\ circles}\right\}-\#\left\{\text{nonwandering\ circles}\right\}\\
\ & \geq & t(D)-t^{+}(D)-m-(n-2)\\
\ & = & t^{-}(D)-(n+m-2)\\
\ & \geq & \frac{t(D)}{2}-(n+m-2)\\
\ & = & \frac{1}{2}\cdot (t(D)-2(n+m-2)).
\end{eqnarray*}

\noindent \underline{Subcase 2}: Suppose that positive syllables of $\beta$ occur in a generator $\sigma_{i}$ for some $2 \leq i \leq n-2$, in which case we have that $\#\left\{\text{medium\ inner\ circles}\right\} \leq 2t^{+}(D)$. Then, by using Inequality~\ref{EWC} and what was said in the beginning of the proof, we get
\begin{eqnarray*}-\chi(\mathbb{G}_{A}') & = & t(D)-\#\left\{\text{OCs}\right\}\\
\ & = & t(D)-\#\left\{\text{medium\ inner\ circles}\right\}-\#\left\{\text{non-essential\ wandering\ circles}\right\}\\
\ & \ & \hspace{.318in} -\#\left\{\text{essential\ wandering\ circles}\right\}-\#\left\{\text{nonwandering\ circles}\right\}\\
\ & \geq & t(D)-2t^{+}(D)-m-(n-2)\\
\ & = & t^{-}(D)-t^{+}(D)-(n+m-2).\\
\end{eqnarray*}

\noindent By applying the inequalities for $-\chi(\mathbb{G}_{A}')$ above to Theorem~\ref{Cor}, we get the desired volume bounds.

\end{proof}

\subsection{Volume bounds in terms of the colored Jones polynomial}

We now continue our study of volume bounds for hyperbolic A-adequate closed braids by translating our diagrammatic volume bounds from earlier to volume bounds in terms of the stable penultimate coefficient $\beta_{K}'$ of the colored Jones polynomial. Recall from the introduction that we denote the \emph{$j^{th}$ colored Jones polynomial} of a link $K$ by
$$J_{K}^{j}(t)=\alpha_{j}t^{m_{j}}+\beta_{j}t^{m_{j}-1}+\cdots+\beta_{j}'t^{r_{j}+1}+\alpha_{j}'t^{r_{j}},$$
\noindent where $j \in \mathbb{N}$ and where the degree of each monomial summand decreases from left to right. The following corollary of Theorem~\ref{mytheorem} is a more precise version of Corollary~\ref{newcor} from the introduction.
 
\begin{corollary} 
\label{newestsetup}
Let $D(K)=\widehat{\beta}$ denote the closure of a nice $n$-braid $\beta \in B_{n}$ that satisfies the assumptions of Lemma~\ref{setup}. In the case that $n=3$ and in the case that $n \geq 4$ when the only positive syllables of $\beta$ possibly occur in the generators $\sigma_{1}$ and $\sigma_{n-1}$, we have the volume bounds 
$$v_{8}\cdot(\left|\beta_{K}'\right|-1) \leq \mathrm{vol}(S^{3}\backslash K) < 20v_{3}\cdot\left(\left|\beta_{K}'\right|+n+m-\dfrac{7}{2}\right).$$
\end{corollary}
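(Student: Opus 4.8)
The plan is to deduce Corollary~\ref{newestsetup} from Theorem~\ref{mytheorem} by translating the diagrammatic quantity $t^{-}(D)$ (and $t(D)$) into the stable penultimate coefficient $\beta_{K}'$ of the colored Jones polynomial. The key input I would invoke is the known relationship between the coefficients of the colored Jones polynomial and the reduced all-A graph for A-adequate links, namely that for an A-adequate diagram the stable penultimate coefficient satisfies $\left|\beta_{K}'\right| = 1 - \chi(\mathbb{G}_{A}') = e(\mathbb{G}_{A}') - v(\mathbb{G}_{A}') + 1$, i.e.\ $\left|\beta_{K}'\right|$ equals the first Betti number of $\mathbb{G}_{A}'$ (this is the Dasbach--Lin / Futer--Kalfagianni--Purcell result, available from \cite{Guts} and \cite{Stoimenow}, and the diagrams here are A-adequate and satisfy the TELC so the hypotheses apply). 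Thus $-\chi(\mathbb{G}_{A}') = \left|\beta_{K}'\right| - 1$.

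First I would record that, by Lemma~\ref{setup}, the diagram $D(K)$ is connected, prime, A-adequate, and satisfies the TELC with $t(D) \geq 2(n-1) \geq 2$, so Theorem~\ref{Cor} applies and gives $-v_{8}\cdot\chi(\mathbb{G}_{A}') \leq \mathrm{vol}(S^{3}\backslash K) < 10v_{3}\cdot(t(D)-1)$. Combining this with the coefficient identity immediately yields the lower bound $v_{8}\cdot(\left|\beta_{K}'\right|-1) \leq \mathrm{vol}(S^{3}\backslash K)$, which is the left-hand inequality of the corollary, with no case distinction needed. Next, for the upper bound, I would use the computations inside the proof of Theorem~\ref{mytheorem}: in the case $n=3$ and in the case $n \geq 4$ with positive syllables only in $\sigma_{1}$ and $\sigma_{n-1}$, that proof shows $-\chi(\mathbb{G}_{A}') = t^{-}(D) - (n+m-2) \geq \frac{1}{2}(t(D) - 2(n+m-2))$ (with $m=0$, $n=3$ in the three-braid case). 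Rearranging, $t(D) \leq 2\bigl(-\chi(\mathbb{G}_{A}')\bigr) + 2(n+m-2) = 2(\left|\beta_{K}'\right|-1) + 2(n+m-2) = 2\left|\beta_{K}'\right| + 2(n+m) - 6$, hence $t(D) - 1 \leq 2\left|\beta_{K}'\right| + 2(n+m) - 7 = 2\bigl(\left|\beta_{K}'\right| + n + m - \tfrac{7}{2}\bigr)$. Plugging into the upper bound of Theorem~\ref{Cor} gives $\mathrm{vol}(S^{3}\backslash K) < 10v_{3}\cdot(t(D)-1) \leq 20v_{3}\cdot\bigl(\left|\beta_{K}'\right| + n + m - \tfrac{7}{2}\bigr)$, as claimed.

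The main obstacle — really the only nontrivial point — is justifying the identity $\left|\beta_{K}'\right| = -\chi(\mathbb{G}_{A}') + 1$ in this setting, i.e.\ verifying that the diagrams covered by Lemma~\ref{setup} meet the precise hypotheses under which the stable penultimate coefficient is controlled by the reduced all-A graph. One must check that $\beta_{j}'$ is indeed stable for $j \geq 2$ (independence of the color $j$), which follows because A-adequacy guarantees the two leading and two trailing coefficients of $J_{K}^{j}$ stabilize, and that the relevant formula for $\beta_{K}'$ in terms of $\mathbb{G}_{A}'$ holds; the TELC, which Lemma~\ref{setup} provides, is exactly the hypothesis that rules out the degenerate configurations in which $\left|\beta_{K}'\right|$ could drop below $1-\chi(\mathbb{G}_{A}')$. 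Once that identity is in hand, everything else is the routine algebra above, and I would present the argument with the same case split as Theorem~\ref{mytheorem} so that the equality $\#\{\text{medium inner circles}\} = t^{+}(D)$ (needed for the $\frac{1}{2}(t(D)-2(n+m-2))$ bound) is available.
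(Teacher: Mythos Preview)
Your argument is correct and matches the paper's proof: cite the identity $\left|\beta_{K}'\right| = 1 - \chi(\mathbb{G}_{A}')$ for connected A-adequate diagrams, then combine Theorem~\ref{Cor} with the inequalities from Theorem~\ref{mytheorem}. Two small slips worth cleaning up: for $n\geq 4$ the proof of Theorem~\ref{mytheorem} gives $-\chi(\mathbb{G}_{A}') \geq t^{-}(D)-(n+m-2)$ (an inequality, not an equality, though this is the direction you need), and the coefficient identity requires only A-adequacy, not the TELC---the TELC is used for the volume lower bound in Theorem~\ref{Cor}, not for controlling $\beta_{K}'$.
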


\begin{proof} Since $D(K)$ is a connected, A-adequate link diagram, then Theorem 3.1 of \cite{HeadTail} implies that the absolute value $$\left|\beta_{K}'\right|:=\left|\beta_{j}'\right|=1-\chi(\mathbb{G}_{A}')$$ is independent of $j \geq 2$. By combining this result with Theorem~\ref{Cor} and Theorem~\ref{mytheorem}, we get the desired result. 
\end{proof}

%
%
%
%
%
%
%
%
%
%
%
%
%
%
 %

\section{Volume bounds for A-adequate closed 3-braids in terms of the Schreier normal form}


\subsection{The Schreier normal form for 3-braids and volume}
\label{normformsec}


A useful development in the history of 3-braids was the solution to the Conjugacy Problem (\cite{Schreier}). During this time, an algorithm was developed that produces from an arbitrary 3-braid word $\beta$ a conjugate 3-braid word $\beta'$, called the \emph{Schreier normal form}, which is the unique representative of the conjugacy class of $\beta$. A version of this algorithm is presented below. 

%
%

\bigskip

\noindent \textbf{Schreier Normal Form Algorithm:} 

\begin{itemize}
\item[(1)] Let $\beta \in B_{3}=\langle \sigma_{1}, \sigma_{2}\ \vert\ \sigma_{1}\sigma_{2}\sigma_{1}=\sigma_{2}\sigma_{1}\sigma_{2}\rangle$ be cyclically reduced into syllables. Introduce new variables $x=\left(\sigma_{1}\sigma_{2}\sigma_{1}\right)^{-1}$ and $y=\sigma_{1}\sigma_{2}$. Thus we have that $\sigma_{1}=y^2x$, $\sigma_{2}=xy^2$, $\sigma_{1}^{-1}=xy$, and $\sigma_{2}^{-1}=yx$. Possibly using cyclic permutation, rewrite $\beta$ as a cyclically reduced word (that is positive) in $x$ and $y$. 
\item[(2)] Introduce $C=x^{-2}=y^{3} \in Z(B_{3}),$ where $Z(B_{3})$ denotes the center of the 3-braid group $B_{3}$. By using these relations and the commutativity of $C$ as much as possible, group all powers of $C$ at the beginning of the braid word and reduce the exponents of $x$ and $y$ as much as possible. Rewrite $\beta$ as $\beta=C^{j}\eta$, where $j \in \mathbb{Z}$ and where
$$\eta = \left\{
\begin{array}{ll} 
\left(xy\right)^{p_{1}}\left(xy^2\right)^{q_{1}}\cdots \left(xy\right)^{p_{s}}\left(xy^2\right)^{q_{s}} & \mathrm{for\ some}\ s, p_{i}, q_{i} \geq 1 \\
\left(xy\right)^{p} & \mathrm{for\ some}\ p \geq 1 \\
\left(xy^{2}\right)^{q} & \mathrm{for\ some}\ q \geq 1 \\
y & \ \\
y^2 & \ \\
x & \ \\
1 & \ \\
\end{array}  \right.
$$
\item[(3)] Possibly using cyclic permutation and the commutativity of $C$, rewrite $\beta$ back in terms of $\sigma_{1}$ and $\sigma_{2}$ as $\beta'=C^{k}\eta'$, where $k \in \mathbb{Z}$ and where
$$ \eta' = \left\{
\begin{array}{ll} 
\sigma_{1}^{-p_{1}}\sigma_{2}^{q_{1}}\cdots \sigma_{1}^{-p_{s}}\sigma_{2}^{q_{s}} & \mathrm{for\ some}\ s, p_{i}, q_{i} \geq 1 \\
\sigma_{1}^{p} & \mathrm{for\ some}\ p\in \mathbb{Z} \\
\sigma_{1}\sigma_{2} & \ \\
\sigma_{1}\sigma_{2}\sigma_{1} & \ \\
\sigma_{1}\sigma_{2}\sigma_{1}\sigma_{2} & \ 
\end{array}  \right.
$$
\end{itemize}

\begin{definition} We call $\beta'=C^{k}\eta' \in B_{3}$ the \emph{Schreier normal form} of $\beta \in B_{3}$. The braid word $\beta'$ is the unique representative of the conjugacy class of $\beta$. Following \cite{Cusp}, we will call a braid $\beta$ \emph{generic} if it has Schreier normal form 
$$\beta'=C^{k}\sigma_{1}^{-p_{1}}\sigma_{2}^{q_{1}}\cdots \sigma_{1}^{-p_{s}}\sigma_{2}^{q_{s}}.$$ 
\end{definition}

Using the Schreier normal form of a 3-braid, Futer, Kalfagianni, and Purcell (\cite{Cusp}) classified the hyperbolic 3-braid closures. Furthermore, given such a hyperbolic closed 3-braid, they gave two-sided bounds on the volume of the link complement, expressing the volume in terms of the parameter $s$ from the Schreier normal form of the 3-braid. By using the more recent machinery built by the same authors in \cite{Guts}, we aim to obtain a sharper lower bound on volume. To begin our study, we first recall two propositions from \cite{Cusp}. 

\begin{proposition}[\cite{Cusp}, Theorem 5.5] Let $D(K)=\widehat{\beta}$ denote the closure of a 3-braid $\beta \in B_{3}$. Then $K$ is hyperbolic if and only if 
\begin{itemize}
\item[(1)] $\beta$ is generic, and 
\item[(2)] $\beta$ is not conjugate to $\sigma_{1}^p\sigma_{2}^q$ for any integers $p$ and $q$. 
\end{itemize}
\label{hypgeneric}
\end{proposition}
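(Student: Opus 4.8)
The plan is to invoke the classification of closed 3-braids up to conjugacy provided by the Schreier normal form algorithm, and then to check hyperbolicity case by case against the short list of normal forms. First I would observe that every 3-braid $\beta$ is conjugate to exactly one word $\beta' = C^k \eta'$ with $\eta'$ in the finite list output by the algorithm, and that $\widehat{\beta} = \widehat{\beta'}$ because conjugate braids have isotopic closures. So it suffices to determine, for each possible shape of $\eta'$, whether $S^3 \setminus \widehat{C^k\eta'}$ admits a complete hyperbolic structure, i.e. (by Thurston's geometrization for link complements) whether the complement is neither reducible, nor toroidal, nor Seifert fibered.

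The main body of the argument is then to dispose of the non-generic normal forms and the exceptional generic family. For $\eta' = \sigma_1 \sigma_2$, $\sigma_1\sigma_2\sigma_1$, or $\sigma_1\sigma_2\sigma_1\sigma_2$, the braid $C^k \eta'$ is a torus braid (these are powers of the full twist times a ``rational'' tail), so its closure is a torus link or a connected sum / satellite thereof, hence never hyperbolic; I would identify each such closure explicitly (e.g. $C^k\sigma_1\sigma_2 = (\sigma_1\sigma_2)^{3k+1}$ closes to a $(2,\cdot)$ or $(3,\cdot)$ torus link). For $\eta' = \sigma_1^p$, the closure of $C^k\sigma_1^p$ is a connected sum of a $(2,p)$-torus link with a $(3, 3k)$-torus link (or is reducible / a split link), so again not hyperbolic; more to the point, $C^k\sigma_1^p$ is conjugate to $\sigma_1^p\sigma_2^0 = \sigma_1^p\sigma_2^q$ with $q=0$ after absorbing $C^k$, which is exactly the excluded form in (2). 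This is where I would be careful: I need that the generic braids that happen to be conjugate to some $\sigma_1^p\sigma_2^q$ are precisely the ones whose closures fail to be hyperbolic among the generic family — these closures are connected sums of two torus links, hence have essential annuli/spheres. Conversely, citing the hard direction, a generic braid $C^k\sigma_1^{-p_1}\sigma_2^{q_1}\cdots\sigma_1^{-p_s}\sigma_2^{q_s}$ that is not conjugate to any $\sigma_1^p\sigma_2^q$ has hyperbolic closure; this is the substantive input and I would attribute it to the Schreier-normal-form analysis together with Thurston's work (and Menasco's result that a non-split prime alternating-or-adequate closed braid that is not a torus link is hyperbolic), essentially quoting \cite{Cusp}.

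The hard part will be the converse direction — showing genuine hyperbolicity of the generic non-torus closures — since that requires ruling out essential tori, which does not follow from diagrammatic adequacy alone. I expect to handle this by reduction to known results: a generic 3-braid closure that is not a torus-link connected sum is, after the normal form, an alternating (or at least A- and B-adequate) diagram of a prime, non-split link that is not a $(2,q)$ torus link, and then Menasco's theorem and the incompressible-torus analysis of 3-braid complements (as in \cite{Cusp}) give hyperbolicity. The finite list of exceptional forms from the algorithm is what makes the case analysis terminate; the only real bookkeeping is matching each exceptional $\eta'$ to a concrete torus link or connected sum and confirming its complement is Seifert fibered or reducible. With that, combining both directions yields the stated ``if and only if.''
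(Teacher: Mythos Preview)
The paper does not prove this proposition at all: it is quoted verbatim as Theorem~5.5 of \cite{Cusp} and used as a black box. There is therefore no ``paper's own proof'' to compare against; your proposal is attempting to supply an argument for a result that the present paper simply imports from the literature.

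As for the sketch itself, the overall strategy (run through the Schreier normal forms and apply geometrization/Menasco to each) is the right shape, but several of the specific claims are off. For instance, $C^{k}\sigma_{1}^{p}$ is \emph{not} conjugate to $\sigma_{1}^{p}\sigma_{2}^{0}$ unless $k=0$, since $C$ generates the center and its exponent is a conjugacy invariant; you cannot ``absorb'' $C^{k}$ into the exponents of $\sigma_{1}$ and $\sigma_{2}$. Likewise, the braids excluded by condition~(2) are exactly those whose normal form has $k=0$ and $s=1$ (up to the sign conventions on $p,q$), so the case analysis must keep careful track of $k$ rather than discarding it. The genuinely hard direction---showing that a generic $\beta$ with $s\ge 2$ (or $s=1$ but $k\neq 0$) has atoroidal complement---is the content of \cite{Cusp} and is not something you can extract from adequacy or Menasco's alternating-link theorem alone, since most of these closed braids are not alternating. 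If you want a self-contained proof you would need to reproduce the essential-surface analysis carried out in \cite{Cusp}; otherwise, the correct move here is simply to cite that theorem, as the paper does.
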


\begin{proposition}[\cite{Cusp}, Theorem 5.6]\label{svolbound}
Let $D(K)=\widehat{\beta}$ denote the closure of a 3-braid $\beta \in B_{3}$. Then, assuming that $K$ is hyperbolic, we have that
$$4v_{3}\cdot s-276.6 < \mathrm{vol}(S^3 \backslash K) < 4v_{8}\cdot s.$$ 
\end{proposition}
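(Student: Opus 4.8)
The plan is to establish the two inequalities by separate geometric arguments, having first used Proposition~\ref{hypgeneric} to pass to the Schreier normal form. Since $K$ is hyperbolic, $\beta$ is generic and is not conjugate to a $\sigma_{1}^{p}\sigma_{2}^{q}$, so after conjugation I may take $\beta=C^{k}\sigma_{1}^{-p_{1}}\sigma_{2}^{q_{1}}\cdots\sigma_{1}^{-p_{s}}\sigma_{2}^{q_{s}}$ with $s\geq1$ blocks and $2s$ syllables; the target quantity $s$ is exactly this block count. The central object I would work with is the link $L=\widehat{\beta}\cup A$, where $A$ is the braid axis. Its complement fibers over $S^{1}$ with fiber a thrice-punctured disk and monodromy $\beta$, and $S^{3}\backslash K$ is recovered from $S^{3}\backslash L$ by a single Dehn filling of the axis cusp, the filling slope recording the central power $C^{k}$. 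Because volume strictly decreases under Dehn filling (Thurston), every estimate made for the richer manifold $S^{3}\backslash L$, or for a further drilling of it, bounds $\mathrm{vol}(S^{3}\backslash K)$ from above; the lower bound will require more care, since filling can genuinely destroy volume.

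For the upper bound $\mathrm{vol}(S^{3}\backslash K)<4v_{8}\cdot s$, I would augment. Adjoin a crossing circle to each of the $2s$ twist regions recorded by the syllables, producing a fully augmented link $J$; then $S^{3}\backslash K$ is obtained from $S^{3}\backslash J$ by Dehn filling the crossing circles together with the axis, so $\mathrm{vol}(S^{3}\backslash K)<\mathrm{vol}(S^{3}\backslash J)$. The complement of a fully augmented link admits a decomposition into two ideal polyhedra whose volumes are controlled by regular ideal octahedra -- the mechanism underlying the $v_{8}$ upper bounds of \cite{Guts} and \cite{Lackenby} -- and I would verify that the contribution of each Schreier block is at most $4v_{8}$, so that $\mathrm{vol}(S^{3}\backslash J)\leq 4v_{8}\cdot s$. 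The strict inequality then survives the filling. The routine bookkeeping here is to check that the central twisting $C^{k}$ produces no twist regions beyond the $2s$ coming from the blocks, and that $J$ is genuinely a hyperbolic fully augmented link so that the polyhedral bound applies.

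For the lower bound $\mathrm{vol}(S^{3}\backslash K)>4v_{3}\cdot s-276.6$, the natural approach is via an essential surface and the guts machinery. I would locate an essential spanning (state) surface $S$ in $S^{3}\backslash K$ whose guts grow linearly in $s$, and apply the Agol--Storm--Thurston lower bound (\cite{AgolStorm}) together with the refined polyhedral estimates of \cite{Guts} to obtain $\mathrm{vol}(S^{3}\backslash K)\geq 4v_{3}\cdot s-C$. Concretely, each block $\sigma_{1}^{-p_{i}}\sigma_{2}^{q_{i}}$ with $|p_{i}|$ and $q_{i}$ sufficiently large should contribute on the order of $4v_{3}$ to the guts (a bounded number of regular ideal tetrahedra per block), while blocks with small exponents contribute little and must be absorbed into the additive error. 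Equivalently, one may run the Dehn-filling comparison of the previous paragraph in reverse: $\mathrm{vol}(S^{3}\backslash K)$ is bounded below by the volume of the drilled manifold minus the volume lost under filling, and the Dehn-filling volume estimates of \cite{Guts} bound that loss.

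The hard part will be the explicit additive constant $276.6$ in the lower bound. Producing a clean linear-in-$s$ lower bound is comparatively soft, but pinning down the constant forces one to control two sources of deficit at once: the per-block contribution fails for syllables whose exponents $p_{i},q_{i}$ are small (the analogue of the $r_{i}\leq-3$ hypotheses appearing in Lemma~\ref{setup} and in Stoimenow's classification, Proposition~\ref{adeq3braid}), and volume is genuinely lost when the filling slope along $A$, or along a crossing circle, is short. Quantifying both of these uniformly, and checking that their total never exceeds $276.6$, is where the real work lies; the remaining steps are the structural reduction to Schreier form and the polyhedral volume counts sketched above.
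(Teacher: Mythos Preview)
The paper does not prove this proposition at all; it is quoted verbatim as Theorem~5.6 of \cite{Cusp} and used as a black box. There is no argument in the present paper to compare your sketch against.

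For context, your outline is in the right spirit---drill the braid axis $A$, analyze the fibered manifold $S^{3}\setminus(K\cup A)$, and Dehn-fill back---but the mechanisms in \cite{Cusp} differ from what you propose in both halves. The upper bound there is not obtained by passing to a fully augmented link; rather, $S^{3}\setminus(K\cup A)$ is a $4$-punctured-sphere bundle whose monodromy is read off the Schreier word, and such bundles carry an explicit ideal triangulation indexed by a path in the Farey complex. Counting tetrahedra along that path yields the $4v_{8}\cdot s$ bound directly. The lower bound and the specific constant $276.6$ come from the effective Dehn-filling volume estimates proved in \cite{Cusp} itself (bounding the volume lost when filling the axis cusp in terms of the slope length), not from the guts machinery of \cite{Guts}, which postdates \cite{Cusp}. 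Your augmented-link route for the upper bound and your guts/Agol--Storm--Thurston route for the lower bound might be made to work, but neither would produce these particular constants without essentially redoing the analysis of \cite{Cusp}.
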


%
%
%

\subsection{Volume bounds in terms of the Schreier normal form}

We begin this section by presenting a result that, for the family of closed 3-braids that satisfy Lemma~\ref{setup}, states that the parameters $k$ and $s$ from the Schreier normal form of the 3-braid can actually be read off of the original 3-braid word. We then use this result to obtain volume bounds in terms of the parameter $s$ from the Schreier normal form. 

\begin{theorem}
\label{normformthm}
Let $D(K)=\widehat{\beta}$ denote the closure of a nice $3$-braid $\beta \in B_{3}$ that satisfies the assumptions of Lemma~\ref{setup}. Then $\beta$ is generic and, furthermore, we are able to express the parameters $k$ and $s$ of the Schreier normal form $\beta'$ in terms of the original 3-braid $\beta$ as follows:
\begin{enumerate}
\item[(1)] $k=-\# \left\{\text{induced\ products}\ \sigma_{2}^{n_{2}}\sigma_{1}^{n_{1}}\ \text{of\ negative\ syllables\ of}\ \beta,\ \text{where}\ n_{1}, n_{2} \leq -3 \right\}$.
\item[(2)] $s=t^{-}(D)=\#\left\{\text{negative\ syllables\ in}\ \beta \right\}$.
\end{enumerate}
\end{theorem}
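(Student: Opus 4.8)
The plan is to run the Schreier Normal Form Algorithm on a braid $\beta$ satisfying Lemma~\ref{setup} and track exactly where the powers of $C$ and the parameter $s$ come from. First I would observe that, for a nice $3$-braid satisfying the Main Lemma, the conditions on positive syllables (Condition~(2a) with $n=3$ forces any positive syllable $\sigma_i^{r_i}$ to have $m_i=1$ and to be cyclically flanked by negative $\sigma_2$-syllables with exponent $\le -3$) imply that, up to cyclic permutation, $\beta$ decomposes into syllable-blocks of two shapes: negative blocks $\sigma_2^{-a}\sigma_1^{-b}$ (with $a,b\le -3$) and "mixed" blocks $\sigma_2^{-a}\sigma_1^{c}$ (with $c>0$, $a\le -3$), where consecutive blocks share the alternation pattern dictated by the 3-braid structure. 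Rewriting in the $x,y$ variables using $\sigma_1=y^2x$, $\sigma_2=xy^2$, $\sigma_1^{-1}=xy$, $\sigma_2^{-1}=yx$, each negative syllable $\sigma_1^{-p}$ becomes $(xy)^p$ and $\sigma_2^{-q}$ becomes $(yx)^q$, while a positive syllable $\sigma_1^{c}$ becomes $(y^2x)^c$. The key computation is to see that assembling the blocks and collecting central elements $C=x^{-2}=y^3$ at the front produces exactly one negative power of $C$ for each induced product $\sigma_2^{n_2}\sigma_1^{n_1}$ of two consecutive negative syllables; this is where the cancellation that creates a $C^{-1}$ occurs (a $yx\cdot xy$ type juxtaposition produces an $x^2=C^{-1}$, or equivalently, reducing the $y$-exponents past $3$ produces the shift), and where positive syllables do not contribute to $k$ because they are "absorbed" into the $\eta$ part via cyclic permutation.

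Second, I would argue that after this rewriting we land in the first branch of the case list in Step~(3) of the algorithm, i.e.\ $\eta'=\sigma_1^{-p_1}\sigma_2^{q_1}\cdots\sigma_1^{-p_s}\sigma_2^{q_s}$, which simultaneously establishes that $\beta$ is generic and identifies $s$. The natural bookkeeping is: each negative syllable of the original $\beta$ contributes one of the $2s$ exponents $-p_i$ or $q_i$ in $\eta'$ (after the $x,y$-to-$\sigma$ round trip and after the central elements have been stripped off), and positive syllables of $\beta$ get merged into adjacent exponents or disappear into the $C^k$ collection. Since there are $t^-(D)$ negative syllables and they must split evenly between the $s$ slots of the form $\sigma_1^{-p_i}$ and the $s$ slots of the form $\sigma_2^{q_i}$, I would conclude $2s = t^-(D) + (\text{something})$; more carefully, using Lemma~\ref{setup}'s conclusion that at least half the twist regions are negative together with the block structure above, one gets that the number of negative syllables equals $s$ (not $2s$) because the $\eta'$ syllables of $\sigma_1^{-p_i}$-type and $\sigma_2^{q_i}$-type are produced from single negative syllables of $\beta$ paired with the merging of the intervening positive syllables and central collection — this is the claim $s = \#\{\text{negative syllables in }\beta\} = t^-(D)$, the last equality being immediate from the definition of $t^-(D)$ and the fact that syllables correspond to twist regions.

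For item~(1), after establishing that the algorithm produces $\beta = C^j \eta$ with $j$ equal to minus the number of adjacent negative-negative syllable juxtapositions $\sigma_2^{n_2}\sigma_1^{n_1}$, I would check that Step~(3) preserves this count, i.e.\ $k = j$, since Step~(3) only uses cyclic permutation and commutativity of $C$ and hence does not change the power of $C$. The cleanest way to pin down $j$ is to do the $x,y$-computation on a single negative block versus a mixed block and then glue: a run of consecutive negative syllables $\sigma_2^{-a_1}\sigma_1^{-b_1}\sigma_2^{-a_2}\sigma_1^{-b_2}\cdots$ becomes $(yx)^{a_1}(xy)^{b_1}(yx)^{a_2}(xy)^{b_2}\cdots$, and at each junction $(xy)^{b}(yx)^{a}$ (i.e.\ $\ldots y\cdot x^? \ldots$, really the $\sigma_1^{-b}\sigma_2^{-a}$ junction) or $(yx)^{a}(xy)^{b}$ one reduction of the $y$-exponent modulo $3$ (equivalently the appearance of $x^2$) releases exactly one $C^{-1}$; I would verify the direction of the inequality/sign carefully so that it matches the stated formula with negative sign, and confirm that junctions involving a positive syllable release no central element.

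The main obstacle I anticipate is the bookkeeping in Step~(2) of the algorithm: keeping precise track of how many factors of $C = x^{-2} = y^{3}$ are generated when the word in $x,y$ is normalized, being careful about the boundary/cyclic junctions (the "wrap-around" syllable where $\sigma_1^{r_{2l-1}}\sigma_2^{r_{2l}}$ meets $\sigma_1^{r_1}$), and showing rigorously that positive syllables contribute $0$ to $k$ and get correctly folded into $\eta'$ rather than spuriously shifting $s$. A clean way to handle this is to induct on the number of blocks, computing the normal form of one block at a time and showing the central-element count is additive over block junctions, with a separate argument for the single cyclic closure junction; the rest (genericity, the identity $s = t^-(D)$, and $k = j$) then follows formally.
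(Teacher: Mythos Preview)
Your overall strategy---rewrite in $x,y$, track where powers of $C$ appear at syllable junctions, and count negative syllables to get $s$---is the same as the paper's.  However, your case analysis rests on a misreading of Lemma~\ref{setup}.  You assert that for $n=3$ Condition~(2a) forces every positive syllable to lie in $\sigma_1$.  That is false: for $n=3$ one has $n-1=2$, so Condition~(2c) governs positive syllables in $\sigma_2$, and these are permitted (flanked by negative $\sigma_1$-syllables).  Consequently your block dichotomy $\sigma_2^{-a}\sigma_1^{-b}$ versus $\sigma_2^{-a}\sigma_1^{c}$ is incomplete, and the junction computations you describe do not cover all cases.  The paper handles this by decomposing $\beta=P_1N_1\cdots P_tN_t$ with each $P_i$ a \emph{single} positive syllable in either generator and each $N_i$ a maximal negative subword, and then runs the algorithm on six types of $P_iN_i$ (labeled (1a)--(3b), distinguished by which generator $P_i$ sits in and by the parity of the length of $N_i$), verifying in each case that no cyclic permutation is needed and that the stray $x$ and $y^2$ factors at block boundaries combine exactly into $\sigma_2$'s upon juxtaposition.

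Your discussion of $s$ is also too loose to stand as written: the line ``$2s=t^-(D)+(\text{something})$; more carefully\ldots'' followed by ``the $\eta'$ syllables of $\sigma_1^{-p_i}$-type and $\sigma_2^{q_i}$-type are produced from single negative syllables of $\beta$'' is not an argument, and in fact the mechanism is more delicate---in the normal form each \emph{original negative syllable} becomes one $\sigma_1^{-p_i}$ factor (with exponent shifted by $1$ or $2$), while the $\sigma_2^{q_i}$ factors are either original positive syllables or newly created $\sigma_2^{1}$'s arising from the $xy^2$ regroupings.  The paper makes this precise by defining local parameters $s_i$ on each $P_iN_i$ normal form and checking directly that $s_i$ equals the number of negative syllables in $P_iN_i$, with additivity under juxtaposition.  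Once you correct the block structure and carry out the full case analysis, your plan becomes the paper's proof.
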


Note that, when looking for the induced products $\sigma_{2}^{n_{2}}\sigma_{1}^{n_{1}}$ of negative syllables of $\beta$ to find $k$, we must look cyclically in the braid word. As a special case of the above theorem, notice that if $\beta=\sigma_{i}^{p_{1}}\sigma_{j}^{n_{1}}\cdots\sigma_{i}^{p_{l}}\sigma_{j}^{n_{l}}$ is an alternating 3-braid where $\{i,j\}=\{1,2\}$, then $k=0$ and $s=l$. 

%

Also note that the count $$\# \left\{\text{induced\ products}\ \sigma_{2}^{n_{2}}\sigma_{1}^{n_{1}}\ \text{of\ negative\ syllables\ of}\ \beta,\ \text{where}\ n_{1}, n_{2} \leq -3 \right\}$$ is the same as the count $$\# \left\{\text{induced\ products}\ \sigma_{1}^{n_{1}'}\sigma_{2}^{n_{2}'}\ \text{of\ negative\ syllables\ of}\ \beta,\ \text{where}\ n_{1}', n_{2}' \leq -3 \right\}.$$ This is because, recalling from Lemma~\ref{setup} that wandering corresponds to the existence of adjacent negative syllables (in adjacent braid generators), the number of times a wandering circle wanders from braid string position 1 to braid string position 3 is the same as the number of times a wandering circle wanders from braid string position 3 to braid string position 1.  


\begin{remark}
Since the 3-braids of Theorem~\ref{normformthm} (which satisfy the assumptions of Lemma~\ref{setup}) are generic, then Proposition~4.15 of \cite{Width} implies, with the assumption that $k \neq 0$, that $$\left|k\right|-1 \leq g_{T}(K) \leq \left|k\right|,$$ where $K$ is the hyperbolic link with closed 3-braid diagram $D(K)=\widehat{\beta}$ and where $g_{T}(K)$ denotes the Turaev genus of $K$. Thus, since conclusion (1) of Theorem~\ref{normformthm} gives that we can read the parameter $k$ from the original 3-braid $\beta$, then we can visually bound the Turaev genus of the braid closure.   
\end{remark}

Deferring the proof of Theorem~\ref{normformthm} to the next section, we now relate the parameter $s$ from the Schreier normal form of a 3-braid from Lemma~\ref{setup} to the stable penultimate coefficient $\beta_{K}'$ of the colored Jones polynomial of the closure of this 3-braid. 

\begin{corollary}\label{scor}
Let $D(K)=\widehat{\beta}$, where $\beta \in B_{3}$ satisfies the assumptions of Lemma~\ref{setup}. Then
$$s=t^{-}(D)=\vert\beta_{K}'\vert.$$ 
Thus, $t^{-}(D)$ and $s$ are link invariants. 
\end{corollary}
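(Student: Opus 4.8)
The plan is to chain together three facts already established in the excerpt: the computation of $-\chi(\mathbb{G}_A')$ for closed $3$-braids in Theorem~\ref{mytheorem}, the identification of $|\beta_K'|$ with $1-\chi(\mathbb{G}_A')$ used in the proof of Corollary~\ref{newestsetup}, and the combinatorial reading of $s$ from Theorem~\ref{normformthm}. Since $\beta\in B_3$ satisfies the assumptions of Lemma~\ref{setup}, all of these apply.

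First I would invoke the $n=3$ case of Theorem~\ref{mytheorem}, which gives $-\chi(\mathbb{G}_A')=t^{-}(D)-1$. Next, because $D(K)$ is a connected, A-adequate link diagram, Theorem~3.1 of \cite{HeadTail} (exactly as quoted in the proof of Corollary~\ref{newestsetup}) yields $|\beta_K'|:=|\beta_j'|=1-\chi(\mathbb{G}_A')$, independent of $j\geq 2$. Combining the two identities gives
$$|\beta_K'| = 1-\chi(\mathbb{G}_A') = 1+(t^{-}(D)-1) = t^{-}(D).$$
Finally, conclusion~(2) of Theorem~\ref{normformthm} states that $s=t^{-}(D)=\#\{\text{negative syllables in }\beta\}$, so all three quantities coincide: $s=t^{-}(D)=|\beta_K'|$.

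For the last sentence, I would note that $\beta_K'$, being the stable penultimate coefficient of the colored Jones polynomial, is a link invariant of $K$; hence $t^{-}(D)=|\beta_K'|$ depends only on $K$ and not on the particular (admissible) diagram, and likewise $s=t^{-}(D)$ is a link invariant. (One could alternatively remark that $s$ is already a conjugacy-class invariant of $\beta$ via the uniqueness of the Schreier normal form, but the point of the corollary is the diagram-independence exhibited through $\beta_K'$.)

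I do not expect a genuine obstacle here — the statement is a bookkeeping corollary assembling earlier results. The only point requiring minor care is making sure the hypotheses of each ingredient are met: Lemma~\ref{setup} supplies connectivity, primeness, A-adequacy, and the TELC (needed for both Theorem~\ref{mytheorem} and the applicability of Theorem~3.1 of \cite{HeadTail}), and niceness of the $3$-braid is exactly what Theorem~\ref{normformthm} requires; so the chain of equalities goes through with no extra work.
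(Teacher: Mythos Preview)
Your proposal is correct and follows essentially the same route as the paper: the paper's proof is the one-line chain $\vert\beta_{K}'\vert = 1-\chi(\mathbb{G}_{A}') = 1+(t^{-}(D)-1) = t^{-}(D) = s$, invoking Theorem~3.1 of \cite{HeadTail}, Theorem~\ref{mytheorem}, and Theorem~\ref{normformthm} in that order, and then observes that the colored Jones coefficients are link invariants. Your write-up is simply a more expanded version of this, with the hypothesis-checking made explicit.
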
 

\begin{proof} Using the conclusions of Lemma~\ref{setup}, we can apply Theorem 3.1 of \cite{HeadTail}, Theorem~\ref{mytheorem}, and Theorem~\ref{normformthm} respectively to get that $\vert\beta_{K}'\vert = 1-\chi(\mathbb{G}_{A}') = 1+(t^{-}(D)-1) = t^{-}(D) = s$. Since the colored Jones polynomial and therefore its coefficients are link invariants, then we can conclude that $t^{-}(D)$ and $s$ are link invariants.
\end{proof}

By combining Corollary~\ref{newestsetup}, Corollary~\ref{scor}, and Proposition~\ref{svolbound}, we get the following result, which is a more precise version of Theorem~\ref{sthm} from the introduction.  

\begin{theorem}\label{newsvolbound}
Let $D(K)=\widehat{\beta}$, where $\beta \in B_{3}$ satisfies the assumptions of Lemma~\ref{setup}. Then we have the volume bounds
$$v_{8}\cdot(s-1) \leq \mathrm{vol}(S^{3}\backslash K) < 4v_{8}\cdot s.$$
\end{theorem}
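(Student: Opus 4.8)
The plan is to assemble this statement directly from three ingredients already available: Corollary~\ref{scor}, Corollary~\ref{newestsetup} (or equivalently Theorem~\ref{mytheorem} in the case $n=3$), and Proposition~\ref{svolbound}. First I would invoke Corollary~\ref{scor} to replace $\vert\beta_K'\vert$ everywhere by $s$ (equivalently by $t^-(D)$), so that all three quantities appearing in the various bounds coincide. For the lower bound, the cleanest route is to use the case $n=3$ of Theorem~\ref{mytheorem}: there $-\chi(\mathbb{G}_A')=t^-(D)-1=s-1$, so Theorem~\ref{Cor} gives $v_8\cdot(s-1)\le \mathrm{vol}(S^3\backslash K)$ immediately. (Alternatively, the left inequality in Corollary~\ref{newestsetup} with $\vert\beta_K'\vert=s$ gives the same thing.) For the upper bound, I would apply Proposition~\ref{svolbound}, which for any hyperbolic closed 3-braid gives $\mathrm{vol}(S^3\backslash K)<4v_8\cdot s$; hyperbolicity of $K$ is guaranteed by the Main Lemma (Lemma~\ref{setup}), whose hypotheses are assumed. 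Stitching the two halves together yields exactly
$$v_8\cdot(s-1)\le \mathrm{vol}(S^3\backslash K)<4v_8\cdot s.$$

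The only subtlety worth a sentence of care is making sure the hypotheses line up: $\beta\in B_3$ satisfying the assumptions of Lemma~\ref{setup} is automatically nice (in the $n=3$ case, niceness is just $l\ge 2$), is cyclically reduced into syllables, and has all positive syllables cyclically flanked by negative syllables (Condition~(2a)); hence Lemma~\ref{mylemma2}, Theorem~\ref{normformthm}, and Corollary~\ref{scor} all apply, and in particular $\beta$ is generic and not conjugate to $\sigma_1^p\sigma_2^q$ (the latter because $\beta$ contains two disjoint complete subwords, forcing both generators to appear at least twice in distinct syllables), so Proposition~\ref{svolbound} is applicable. I would state these verifications in one compact paragraph rather than belaboring them, since each is recorded earlier in the paper.

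I do not anticipate a genuine obstacle here: this theorem is a corollary-style synthesis, and the real content lives in Theorem~\ref{mytheorem}, Theorem~\ref{normformthm}, and the imported Proposition~\ref{svolbound}. The one place to be mildly attentive is that the upper bound $4v_8\cdot s$ comes from the general 3-braid estimate of \cite{Cusp} rather than from the diagrammatic $10v_3\cdot(t(D)-1)$ bound of Theorem~\ref{mytheorem}; one could ask which is sharper, but the theorem as stated simply reports the former, so no comparison is needed in the proof itself. Thus the proof is essentially a three-line citation chain, and I would write it as such.
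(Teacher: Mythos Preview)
Your proposal is correct and matches the paper's approach exactly: the paper presents this theorem as an immediate consequence of combining Corollary~\ref{newestsetup}, Corollary~\ref{scor}, and Proposition~\ref{svolbound}, with no further argument. Your only unnecessary step is the attempted direct verification that $\beta$ is not conjugate to $\sigma_{1}^{p}\sigma_{2}^{q}$ (syllable counts are not conjugacy invariants, so that argument is shaky as written); but this is moot, since Proposition~\ref{svolbound} requires only hyperbolicity of $K$, which you already correctly extract from Lemma~\ref{setup}.
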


\begin{remark}
Comparing the lower bound on volume of Theorem~\ref{newsvolbound} to that of Proposition~\ref{svolbound}, we get that $v_{8}\cdot (s-1) \geq 4v_{3}\cdot s-276.6$ is equivalent to the condition that $\displaystyle s \leq \frac{276.6-v_{8}}{4v_{3}-v_{8}} \approx 690$. Therefore, the lower bound found in Theorem~\ref{newsvolbound} is sharper than the lower bound provided by Proposition~\ref{svolbound} unless the parameter $s$ from the Schreier normal form is very large. 
\end{remark}

\subsection{The proof of Theorem~\ref{normformthm}}

We will now prove Theorem~\ref{normformthm} using a series of lemmas, beginning by first considering the simpler case that $\beta \in B_{3}$ is a negative braid. 

\begin{lemma} Let $D(K)=\widehat{\beta}$, where $\beta \in B_{3}$ is a negative braid that satisfies the assumptions of Lemma~\ref{setup}. Then $\beta$ is generic and, furthermore, we are able to express the parameters $k$ and $s$ of the Schreier normal form $\beta'$ in terms of the original 3-braid $\beta$ as follows:
\begin{enumerate}
\item[(1)] $k=-\# \left\{\text{induced\ products}\ \sigma_{2}^{n_{2}}\sigma_{1}^{n_{1}}\ \text{of\ negative\ syllables\ of}\ \beta,\ \text{where}\ n_{1}, n_{2} \leq -3 \right\}$.
\item[(2)] $s=t^{-}(D)=\#\left\{\text{negative\ syllables\ in}\ \beta \right\}$.
\end{enumerate}
\end{lemma}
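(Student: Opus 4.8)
The plan is to take a negative 3-braid $\beta$ satisfying Lemma~\ref{setup} and push it through the Schreier Normal Form Algorithm step by step, keeping careful track of how the syllables translate into $x,y$-words and then into powers of $C$. Write $\beta = \sigma_{i_1}^{r_1}\sigma_{i_2}^{r_2}\cdots\sigma_{i_{2l}}^{r_{2l}}$ cyclically reduced into syllables, where the generators alternate between $\sigma_1$ and $\sigma_2$ and every $r_m \le -3$ (by Condition (1)). Since $\sigma_1^{-1} = xy$ and $\sigma_2^{-1} = yx$, each negative syllable $\sigma_1^{r_m} = (xy)^{|r_m|}$ and $\sigma_2^{r_m} = (yx)^{|r_m|}$. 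First I would substitute these in and, using cyclic permutation so the word begins and ends appropriately, write $\beta$ as a positive word in $x,y$; because the $\sigma_1$- and $\sigma_2$-syllables alternate, the $x$'s and $y$'s interleave in a controlled pattern, and at each junction between a $\sigma_1^{r_m}=(xy)^{|r_m|}$ block and a following $\sigma_2^{r_{m+1}}=(yx)^{|r_{m+1}|}$ block (or vice versa) one gets either an adjacent $xx$ or an adjacent $yy$, which is exactly where powers of $C = x^{-2} = y^3$ will be extracted.

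The core computation is Step (2): collecting the $C$'s. The key bookkeeping fact I would establish is that each adjacent pair of negative syllables of the form $\sigma_2^{n_2}\sigma_1^{n_1}$ (read cyclically, $n_1,n_2\le -3$) contributes exactly one factor of $C^{-1}$ after we commute all $C$'s to the front and reduce exponents, while the pairs $\sigma_1^{n_1}\sigma_2^{n_2}$ contribute the complementary $y$-reductions that balance out. Concretely, I expect that after rewriting, $\beta = C^{k}\eta$ with $\eta$ a reduced word of the form $(xy)^{p_1}(xy^2)^{q_1}\cdots(xy)^{p_s}(xy^2)^{q_s}$ where $s$ equals the number of $\sigma_1$-syllables (equivalently $\sigma_2$-syllables), i.e. $s = l = t^-(D)$, since $\beta$ is negative so $t^-(D) = \#\{\text{syllables}\} = 2l$ — wait, here I must be careful: for a negative alternating-in-generator braid the syllables pair up, and one should check against the $n=3$ count in Theorem~\ref{mytheorem}, namely $-\chi(\mathbb{G}_A') = t^-(D)-1$. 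I would reconcile this by noting $\eta$ in the form with parameter $s$ satisfies $s = \#\{\sigma_2\text{-syllables}\}$, and for a negative braid every syllable is negative, so matching the two expressions pins down $s = t^-(D)$ after accounting for how many $\sigma_1$- versus $\sigma_2$-syllables there are (which are equal since the word alternates and closes up). Having $\eta$ in exactly this shape also certifies that $\beta$ is generic, giving the first assertion.

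For the $k$-count I would argue as follows: the only places where a net power of $C$ is forced out are the $yy\to C$-and-$y^{-1}$ reductions, and tracing the interleaving shows these occur precisely at the cyclic junctions where a $\sigma_2$-syllable is immediately followed by a $\sigma_1$-syllable. Since $\beta$ is a negative braid every junction is such a junction except — and this is the subtlety — the count is not $l$ but rather $l$ minus a correction, because Step (3) cyclically permutes $\eta$ back to $\sigma$-form and this can absorb or shift one factor of $C$; I would verify the exact constant by running a small explicit example (say $\sigma_1^{-3}\sigma_2^{-3}\sigma_1^{-3}\sigma_2^{-3}$) and matching. The clean statement to prove is that $k = -\#\{\text{cyclic induced products }\sigma_2^{n_2}\sigma_1^{n_1}\}$, and since in a negative braid the $\sigma_2\sigma_1$ junctions and $\sigma_1\sigma_2$ junctions alternate cyclically there are exactly $l$ of each — so this count is just $l$, but I would double-check the algorithm does not drop one via the centrality shuffling.

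The main obstacle will be Step (3): making rigorous and precise exactly how cyclically permuting the $x,y$-word $\eta = (xy)^{p_1}(xy^2)^{q_1}\cdots$ back into $\sigma_1^{-p_1}\sigma_2^{q_1}\cdots$ interacts with the power of $C$ already split off, and confirming no off-by-one error creeps in between "number of $\sigma_2\sigma_1$ junctions" and the final $k$. I would handle this by doing the translation symmetrically — treating the word as cyclic throughout, never breaking it — so that the $C$-count becomes a genuinely cyclic invariant of the syllable sequence, and then reading off $k$ and $s$ directly from that cyclic structure. A secondary (but routine) point is checking that the $\eta$ produced is already reduced in the sense required by the algorithm, i.e. all $p_i, q_i \ge 1$, which follows from $|r_m| \ge 3 \ge 1$ after the reductions, guaranteeing genericity.
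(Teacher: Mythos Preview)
Your overall strategy---substitute $\sigma_1^{-1}=xy$, $\sigma_2^{-1}=yx$, collect powers of $C$, then translate back---is exactly what the paper does, and your identification of the $\sigma_2\to\sigma_1$ junctions as the source of the $C^{-1}$ factors (via $\ldots yx\cdot xy\ldots = \ldots y(x^2)y\ldots = \ldots yC^{-1}y\ldots$) is correct. There is in fact \emph{no} off-by-one: with $\beta=\sigma_1^{n_1}\sigma_2^{n_2}\cdots\sigma_1^{n_{2m-1}}\sigma_2^{n_{2m}}$ the paper's direct computation yields $k=-m$, and there are exactly $m$ cyclic $\sigma_2^{n}\sigma_1^{n'}$ junctions, so your worry there is unfounded.

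The genuine gap is in your count of $s$. You guess that $s$ equals the number of $\sigma_1$-syllables (so $s=l$ in your notation, $s=m$ in the paper's), and your attempted ``reconciliation'' does not fix this. In fact the explicit computation gives
\[
\beta \;\cong\; (C^{-1})^{m}\,(xy)^{-n_1-2}(xy^2)\,(xy)^{-n_2-2}(xy^2)\cdots(xy)^{-n_{2m}-2}(xy^2),
\]
so \emph{each} original syllable (whether in $\sigma_1$ or $\sigma_2$) contributes one $(xy)^{p_i}(xy^2)^{q_i}$ pair, with $p_i=-n_i-2\ge 1$ and $q_i=1$. Hence $s=2m$, the total number of negative syllables, which is $t^{-}(D)$ as the lemma asserts. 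The mechanism you are missing is that the $\sigma_1\to\sigma_2$ junctions produce $\ldots xy\cdot yx\ldots$, and that extra $y$ combines with the neighboring $xy$ to form an $(xy^2)$ block rather than being absorbed; this doubles the number of alternating blocks in $\eta$ relative to what you expected.

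A minor point: the paper gets genericity for free from hyperbolicity of $K$ (Lemma~\ref{setup}) together with Proposition~\ref{hypgeneric}, rather than by checking the shape of $\eta$ directly. Your route works too once the computation above is done, since $-n_i-2\ge 1$ guarantees all $p_i\ge 1$.
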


\begin{proof} By Lemma~\ref{setup}, we have that $K$ is hyperbolic. By Proposition~\ref{hypgeneric}, this implies that $\beta$ is generic. Since $\beta$ is a negative braid, then (possibly using cyclic permutation) we may write $\beta$ as $\beta=\sigma_{1}^{n_{1}}\sigma_{2}^{n_{2}}\cdots\sigma_{1}^{n_{2m-1}}\sigma_{2}^{n_{2m}}$, where $n_{i} \leq -3$ and the fact that $\beta$ is nice forces the condition that $m \geq 2$. Applying the Schreier Normal Form Algorithm, we get
\begin{eqnarray*}
\beta & = & \sigma_{1}^{n_{1}}\sigma_{2}^{n_{2}}\cdots\sigma_{1}^{n_{2m-1}}\sigma_{2}^{n_{2m}}\\
\ & = & (xy)^{-n_{1}}(yx)^{-n_{2}}\cdots(xy)^{-n_{2m-1}}(yx)^{-n_{2m}}\\ 
\ & = & (xy)^{-n_{1}-1}(xy^{2})(xy)^{-n_{2}-1}(x^{2})y\cdots(x^{2})y(xy)^{-n_{2m-1}-2}(xy^{2})(xy)^{-n_{2m}-1}x\\ 
\ & = & (xy)^{-n_{1}-1}(xy^{2})(xy)^{-n_{2}-1}(C^{-1})y\cdots(C^{-1})y(xy)^{-n_{2m-1}-2}(xy^{2})(xy)^{-n_{2m}-1}x\\ 
\ & \cong & (C^{-1})^{m-1}(xy)^{-n_{1}-1}(xy^{2})(xy)^{-n_{2}-2}(xy^{2})\cdots(xy^{2})(xy)^{-n_{2m-1}-2}(xy^{2})(xy)^{-n_{2m}-1}x\\ 
\ & \cong & (C^{-1})^{m-1}x(xy)^{-n_{1}-1}(xy^{2})(xy)^{-n_{2}-2}(xy^{2})\cdots(xy^{2})(xy)^{-n_{2m-1}-2}(xy^{2})(xy)^{-n_{2m}-1}\\ 
\ & = & (C^{-1})^{m-1}(x^{2})y(xy)^{-n_{1}-2}(xy^{2})(xy)^{-n_{2}-2}(xy^{2})\cdots(xy^{2})(xy)^{-n_{2m-1}-2}(xy^{2})(xy)^{-n_{2m}-1}\\ 
\ & = & (C^{-1})^{m}y(xy)^{-n_{1}-2}(xy^{2})(xy)^{-n_{2}-2}(xy^{2})\cdots(xy^{2})(xy)^{-n_{2m-1}-2}(xy^{2})(xy)^{-n_{2m}-1}\\ 
\ & \cong & (C^{-1})^{m}(xy)^{-n_{1}-2}(xy^{2})(xy)^{-n_{2}-2}(xy^{2})\cdots(xy^{2})(xy)^{-n_{2m-1}-2}(xy^{2})(xy)^{-n_{2m}-1}y\\ 
\ & = & (C^{-1})^{m}(xy)^{-n_{1}-2}(xy^{2})(xy)^{-n_{2}-2}(xy^{2})\cdots(xy^{2})(xy)^{-n_{2m-1}-2}(xy^{2})(xy)^{-n_{2m}-2}(xy^{2})\\ 
\ & = & (C^{-1})^{m}\sigma_{1}^{n_{1}+2}\sigma_{2}\sigma_{1}^{n_{2}+2}\sigma_{2}\cdots\sigma_{2}\sigma_{1}^{n_{2m-1}+2}\sigma_{2}\sigma_{1}^{n_{2m}+2}\sigma_{2}\\
\ & = & \beta', 
\end{eqnarray*}
\noindent where $\cong$ denotes that cyclic permutation or the fact that $C \in Z(B_{3})$ has been used. Thus, we see that
$$k=-m=-\# \left\{\text{induced\ products}\ \sigma_{2}^{n_{i}}\sigma_{1}^{n_{i+1}}\ \text{of\ negative\ syllables\ of}\ \beta,\ \text{where}\ n_{i}, n_{i+1} \leq -3 \right\}$$
\noindent and
$$s=2m=\#\left\{\text{negative\ syllables\ in}\ \beta \right\}.$$
\noindent Recall that, when looking for the induced products $\sigma_{2}^{n_{i}}\sigma_{1}^{n_{i+1}}$ of negative syllables of $\beta$ to find $k$, we must look cyclically in the braid word. 
\end{proof}

We will now consider the more complicated case that $\beta \in B_{3}$ is not a negative braid. Since $\beta$ is not a negative braid, then (possibly using cyclic permutation) we may write $\beta$ as $\beta=P_{1}N_{1} \cdots P_{t}N_{t}$, where $P_{i}$ denotes a positive syllable of $\beta$ and where $N_{i}$ denotes a maximal length negative induced subword of $\beta$. Note that this decomposition arises as a result of Condition (2a), Condition (2b), and Condition (2c) of Lemma~\ref{setup} (the conditions that positive syllables are cyclically neighbored on both sides by negative syllables).

\bigskip

Our strategy for the proof of this case is to apply the Schreier Normal Form Algorithm to the subwords $P_{i}N_{i}$, showing along the way that cyclic permutation is never needed in applying the algorithm, to show that Theorem~\ref{normformthm} is locally satisfied for the $P_{i}N_{i}$, and to finally show that juxtaposing the subwords $P_{i}N_{i}$ to form $\beta$ allows the local conclusions of Theorem~\ref{normformthm} for the $P_{i}N_{i}$ to combine to give the global conclusion of Theorem~\ref{normformthm} for $\beta$. 

\bigskip 

We now list the types of induced subwords $P_{i}N_{i}$ below. For each subword, we consider the two possible subtypes. Let $p>0$ denote a positive exponent and let the $n_{i}\leq-3$ denote negative exponents.   

\begin{enumerate}
	\item[(1a)] $\sigma_{2}^{p}\sigma_{1}^{n_{1}}$
	\item[(1b)] $\sigma_{1}^{p}\sigma_{2}^{n_{1}}$
	\item[(2a)] $\sigma_{2}^{p}\sigma_{1}^{n_{1}}\sigma_{2}^{n_{2}}\cdots\sigma_{1}^{n_{2m-1}}\sigma_{2}^{n_{2m}}$, where $m \geq 1$
	\item[(2b)] $\sigma_{1}^{p}\sigma_{2}^{n_{1}}\sigma_{1}^{n_{2}}\cdots\sigma_{2}^{n_{2m-1}}\sigma_{1}^{n_{2m}}$, where $m \geq 1$
	\item[(3a)] $\sigma_{2}^{p}\sigma_{1}^{n_{1}}\sigma_{2}^{n_{2}}\cdots\sigma_{1}^{n_{2m-1}}\sigma_{2}^{n_{2m}}\sigma_{1}^{n_{2m+1}}$, where $m \geq 1$
	\item[(3b)] $\sigma_{1}^{p}\sigma_{2}^{n_{1}}\sigma_{1}^{n_{2}}\cdots\sigma_{2}^{n_{2m-1}}\sigma_{1}^{n_{2m}}\sigma_{2}^{n_{2m+1}}$, where $m \geq 1$
\end{enumerate}

\begin{lemma} \label{normlist} Given the list of induced subwords $P_{i}N_{i}$ above, applying the Schreier Normal Form Algorithm produces the following corresponding list of braid words. 
\begin{enumerate}
	\item[(1a)] $\sigma_{2}^{p}\sigma_{1}^{n_{1}}$
	\item[(1b)] $\mathbf{y^{2}}\sigma_{2}^{p-1}\sigma_{1}^{n_{1}}\mathbf{x}$
	\item[(2a)] $(C^{-1})^{m-1}\sigma_{2}^{p}\sigma_{1}^{n_{1}+1}\sigma_{2}\sigma_{1}^{n_{2}+2}\sigma_{2} \cdots \sigma_{2}\sigma_{1}^{n_{2m-1}+2}\sigma_{2}\sigma_{1}^{n_{2m}+1}\mathbf{x}$, where $m \geq 1$
	\item[(2b)] $(C^{-1})^{m}\mathbf{y^{2}}\sigma_{2}^{p-1}\sigma_{1}^{n_{1}+1}\sigma_{2}\sigma_{1}^{n_{2}+2}\sigma_{2} \cdots \sigma_{2}\sigma_{1}^{n_{2m-1}+2}\sigma_{2}\sigma_{1}^{n_{2m}+1}$, where $m \geq 1$
	\item[(3a)] $(C^{-1})^{m}\sigma_{2}^{p}\sigma_{1}^{n_{1}+1}\sigma_{2}\sigma_{1}^{n_{2}+2}\sigma_{2} \cdots \sigma_{2}\sigma_{1}^{n_{2m-1}+2}\sigma_{2}\sigma_{1}^{n_{2m}+2}\sigma_{2}\sigma_{1}^{n_{2m+1}+1}$, where $m \geq 1$
	\item[(3b)] $(C^{-1})^{m}\mathbf{y^{2}}\sigma_{2}^{p-1}\sigma_{1}^{n_{1}+1}\sigma_{2}\sigma_{1}^{n_{2}+2}\sigma_{2} \cdots \sigma_{2}\sigma_{1}^{n_{2m-1}+2}\sigma_{2}\sigma_{1}^{n_{2m}+2}\sigma_{2}\sigma_{1}^{n_{2m+1}+1}\mathbf{x}$, where $m~\geq~1$
\end{enumerate}
Also, when applying the Schreier Normal Form Algorithm to the subwords $P_{i}N_{i}$, cyclic permutation is avoided. Furthermore, letting $k_{i}$ denote the exponent of $C$ in the application of the Schreier Normal Form Algorithm to the subword $P_{i}N_{i}$, we have that a local version of Theorem~\ref{normformthm} holds for the $k_{i}$.
\end{lemma}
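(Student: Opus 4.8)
\textbf{Proof proposal for Lemma~\ref{normlist}.}

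The plan is to carry out the Schreier Normal Form Algorithm directly on each of the six induced subword types, using the substitution dictionary $\sigma_1 = y^2x$, $\sigma_2 = xy^2$, $\sigma_1^{-1} = xy$, $\sigma_2^{-1} = yx$ together with $C = x^{-2} = y^3 \in Z(B_3)$, and tracking carefully where the ``boundary'' letters $x$ and $y^2$ (shown in bold in the statement) accumulate. First I would handle the two base cases (1a) and (1b): these contain a single negative syllable, so no factor of $C$ appears, and one simply rewrites $\sigma_2^p\sigma_1^{n_1}$ as $(xy^2)^p(xy)^{-n_1}$ and reassembles. The only subtlety is that for (1b), $\sigma_1^p = (y^2x)^p = y^2(xy^2)^{p-1}x$, which is exactly how the boundary $y^2$ on the left and $x$ on the right are created; no cyclic permutation is needed because $p \geq 1$ guarantees at least one internal $xy^2$ block or, when $p=1$, the word is literally $y^2\sigma_1^{n_1}x$ after using $\sigma_1^{n_1} = (xy)^{-n_1}$ preceded by nothing. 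This establishes the pattern: bold letters are precisely the ``leftover'' pieces of a positive $\sigma_1$-syllable that cannot be absorbed into a $C$ or an internal $(xy^2)$.

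Next I would treat the general cases (2a), (2b), (3a), (3b), which is the bulk of the work and mirrors closely the computation already displayed in the excerpt for a pure negative braid. The key algebraic identities to invoke repeatedly are: $(xy)^{-n_i}(yx)^{-n_{i+1}} = (xy)^{-n_i-1}(xy^2)(xy)^{-n_{i+1}-1}\cdot(\text{no }C)$ does not quite work, so instead one uses $(yx)^{-n}(xy)^{-m} = (yx)^{-n-1}y\cdot x^2 \cdot y(xy)^{-m-1} $-type manipulations, or more efficiently the identity $\sigma_1^{-1}\sigma_2^{-1} = (xy)(yx) = xy^3x = x\,C\,x = C\,x^2 = C\,C^{-1} = 1$? — no; the correct bookkeeping is $\sigma_2^{-1}\sigma_1^{-1} = (yx)(xy) = yx^2y = y\,C^{-1}\,y$, so each induced product $\sigma_2^{n_{2i}}\sigma_1^{n_{2i+1}}$ of consecutive negative syllables contributes exactly one factor of $C^{-1}$. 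Counting these products in each of cases (2a)--(3b) gives the exponents $m-1$ (case 2a, where there are $m$ sign-alternating negative syllables but the first is shielded by $P_i$), $m$ (cases 2b, 3a, 3b), which is exactly what is claimed. The exponent reductions $n_j \to n_j + 1$ at the two ends and $n_j \to n_j + 2$ internally then fall out of how many $x$'s and $y^2$'s get peeled off each $(xy)^{-n_j}$ block into the adjacent $C^{-1}$'s and $(xy^2)$'s.

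I would then argue, case by case, that cyclic permutation is never invoked: in the displayed computation in the excerpt for a pure negative braid, the $\cong$ steps used cyclic permutation only to move a trailing $x$ or $y$ to the front in order to complete the last $C^{-1}$ — but here each $P_iN_i$ ends with a negative $N_i$ whose final block $(xy)^{-n_{2m}}$ already carries enough letters, and the positive prefix $P_i$ supplies the needed $x$ or $y^2$ on the left, so the factors of $C^{-1}$ can always be formed in place. Concretely, for (2b) and (3b) the leading $\sigma_1^p$ expands to $y^2(xy^2)^{p-1}x$ and the $x$ it contributes on its right merges with the first $(xy)^{-n_1}$ to start forming $C^{-1}$ without needing to wrap around. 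Finally, defining $k_i$ as the exponent of $C$ produced — namely $0$ for (1a), (1b), $-(m-1)$ for (2a), and $-m$ for (2b), (3a), (3b) — I would observe that this equals $-\#\{$induced products $\sigma_2^{n_b}\sigma_1^{n_a}$ of negative syllables \emph{within} $P_iN_i$ with $n_a, n_b \leq -3\}$, which is the asserted local version of Theorem~\ref{normformthm}(1); the local version of (2), that $s_i$ (the number of $\sigma_2$-blocks in $\eta_i'$) equals the number of negative syllables in $P_iN_i$, is immediate from reading off $\eta_i'$ in each displayed output.

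\textbf{Main obstacle.} The genuinely delicate point is the bookkeeping at the \emph{two ends} of each $P_iN_i$: proving that the boundary letters come out exactly as the bold $y^2\cdots x$ (or $x$, or $y^2$, or nothing) claimed, and that these are precisely what is needed so that the later juxtaposition lemma can glue consecutive $P_iN_i$ together — the trailing $x$ of one block must combine with the leading $y^2$ of the next to make a $\sigma_1$ or absorb into a $C$. Getting the parity of exponent shifts ($+1$ versus $+2$) correct at the junctions, and confirming that in every one of the six cases no cyclic permutation sneaks in, requires writing out each expansion explicitly and is the step most prone to off-by-one errors; everything else is a mechanical substitution.
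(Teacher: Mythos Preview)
Your proposal is correct and follows essentially the same approach as the paper: a direct, case-by-case application of the substitution dictionary, with the key identity $(yx)(xy)=yx^2y=yC^{-1}y$ (equivalently, each internal product $\sigma_2^{-1}\sigma_1^{-1}$ contributes one $C^{-1}$) doing all the work in producing the claimed exponents of $C$, and the observation that the positive prefix $P_i$ supplies the boundary letters so that no cyclic permutation is needed. The paper simply writes out the six expansions explicitly and then reads off $k_i$; your narrative account of \emph{why} the exponent shifts are $+1$ at the ends and $+2$ internally, and why the $C$-count matches the number of $\sigma_2^{n_b}\sigma_1^{n_a}$ negative products, is the same bookkeeping made verbal. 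One small slip: in your ``Main obstacle'' paragraph you write that the trailing $\mathbf{x}$ combines with the next leading $\mathbf{y^2}$ to make a $\sigma_1$; in fact $xy^2=\sigma_2$, which is exactly what is needed (and used in the subsequent gluing lemma) to restore $\sigma_2^{p}$ from $\sigma_2^{p-1}$.
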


\begin{proof} Let $\cong$ denote that the fact that $C \in Z(B_{3})$ has been used. Applying the Schreier Normal Form Algorithm to type (1a), we get 
$$P_{i}N_{i} = \sigma_{2}^{p}\sigma_{1}^{n_{1}} = (xy^{2})^{p}(xy)^{-n_{1}} = \sigma_{2}^{p}\sigma_{1}^{n_{1}}.$$

\noindent Applying the Schreier Normal Form Algorithm to type (1b), we get 
$$P_{i}N_{i} = \sigma_{1}^{p}\sigma_{2}^{n_{1}} = (y^{2}x)^{p}(yx)^{-n_{1}} = y^{2}(xy^{2})^{p-1}(xy)^{-n_{1}}x = y^{2}\sigma_{2}^{p-1}\sigma_{1}^{n_{1}}x$$

\noindent Applying the Schreier Normal Form Algorithm to type (2a), we get  
\begin{eqnarray*}
P_{i}N_{i} & = & \sigma_{2}^{p}\sigma_{1}^{n_{1}}\sigma_{2}^{n_{2}}\cdots\sigma_{1}^{n_{2m-1}}\sigma_{2}^{n_{2m}}\\
\ & = & (xy^{2})^{p}(xy)^{-n_{1}}(yx)^{-n_{2}} \cdots (xy)^{-n_{2m-1}}(yx)^{-n_{2m}}\\
\ & = & (xy^{2})^{p}(xy)^{-n_{1}-1}(xy^{2})(xy)^{-n_{2}-1}(x^{2})y \cdots(x^{2})y(xy)^{-n_{2m-1}-2}(xy^{2})(xy)^{-n_{2m}-1}x\\
\ & = & (xy^{2})^{p}(xy)^{-n_{1}-1}(xy^{2})(xy)^{-n_{2}-1}(C^{-1})y \cdots(C^{-1})y(xy)^{-n_{2m-1}-2}(xy^{2})(xy)^{-n_{2m}-1}x\\
\ & \cong & (C^{-1})^{m-1}(xy^{2})^{p}(xy)^{-n_{1}-1}(xy^{2})(xy)^{-n_{2}-2}(xy^{2}) \cdots\\
\ & \ & \cdots(xy^{2})(xy)^{-n_{2m-1}-2}(xy^{2})(xy)^{-n_{2m}-1}x\\
\ & = & (C^{-1})^{m-1}\sigma_{2}^{p}\sigma_{1}^{n_{1}+1}\sigma_{2}\sigma_{1}^{n_{2}+2}\sigma_{2} \cdots \sigma_{2}\sigma_{1}^{n_{2m-1}+2}\sigma_{2}\sigma_{1}^{n_{2m}+1}x
\end{eqnarray*}

\noindent Applying the Schreier Normal Form Algorithm to type (2b), we get 
\begin{eqnarray*}
P_{i}N_{i} & = & \sigma_{1}^{p}\sigma_{2}^{n_{1}}\sigma_{1}^{n_{2}}\cdots\sigma_{2}^{n_{2m-1}}\sigma_{1}^{n_{2m}}\\
\ & = & (y^{2}x)^{p}(yx)^{-n_{1}}(xy)^{-n_{2}} \cdots (yx)^{-n_{2m-1}}(xy)^{-n_{2m}}\\
\ & = & y^{2}(xy^{2})^{p-1}(xy)^{-n_{1}}(x^{2})y(xy)^{-n_{2}-2}(xy^{2}) \cdots(xy^{2})(xy)^{-n_{2m-1}-1}(x^{2})y(xy)^{-n_{2m}-1}\\
\ & = & y^{2}(xy^{2})^{p-1}(xy)^{-n_{1}}(C^{-1})y(xy)^{-n_{2}-2}(xy^{2}) \cdots(xy^{2})(xy)^{-n_{2m-1}-1}(C^{-1})y(xy)^{-n_{2m}-1}\\
\ & \cong & (C^{-1})^{m}y^{2}(xy^{2})^{p-1}(xy)^{-n_{1}-1}(xy^{2})(xy)^{-n_{2}-2}(xy^{2}) \cdots\\
\ & \ & \cdots(xy^{2})(xy)^{-n_{2m-1}-2}(xy^{2})(xy)^{-n_{2m}-1}\\
\ & = & (C^{-1})^{m}y^{2}\sigma_{2}^{p-1}\sigma_{1}^{n_{1}+1}\sigma_{2}\sigma_{1}^{n_{2}+2}\sigma_{2} \cdots \sigma_{2}\sigma_{1}^{n_{2m-1}+2}\sigma_{2}\sigma_{1}^{n_{2m}+1}
\end{eqnarray*}

\noindent Applying the Schreier Normal Form Algorithm to type (3a), we get 
\begin{eqnarray*}
P_{i}N_{i} & = & \sigma_{2}^{p}\sigma_{1}^{n_{1}}\sigma_{2}^{n_{2}}\cdots\sigma_{1}^{n_{2m-1}}\sigma_{2}^{n_{2m}}\sigma_{1}^{n_{2m+1}}\\
\ & = & (xy^{2})^{p}(xy)^{-n_{1}}(yx)^{-n_{2}} \cdots (xy)^{-n_{2m-1}}(yx)^{-n_{2m}}(xy)^{-n_{2m+1}}\\
\ & = & (xy^{2})^{p}(xy)^{-n_{1}-1}(xy^{2})(xy)^{-n_{2}-1}(x^{2})y \cdots\\
\ & \ & \cdots(x^{2})y(xy)^{-n_{2m-1}-2}(xy^{2})(xy)^{-n_{2m}-1}(x^{2})y(xy)^{-n_{2m+1}-1}\\
\ & = & (xy^{2})^{p}(xy)^{-n_{1}-1}(xy^{2})(xy)^{-n_{2}-1}(C^{-1})y \cdots\\
\ & \ & \cdots(C^{-1})y(xy)^{-n_{2m-1}-2}(xy^{2})(xy)^{-n_{2m}-1}(C^{-1})y(xy)^{-n_{2m+1}-1}\\
\ & \cong & (C^{-1})^{m}(xy^{2})^{p}(xy)^{-n_{1}-1}(xy^{2})(xy)^{-n_{2}-2}(xy^{2}) \cdots\\
\ & \ & \cdots(xy^{2})(xy)^{-n_{2m-1}-2}(xy^{2})(xy)^{-n_{2m}-2}(xy^{2})(xy)^{-n_{2m+1}-1}\\
\ & = & (C^{-1})^{m}\sigma_{2}^{p}\sigma_{1}^{n_{1}+1}\sigma_{2}\sigma_{1}^{n_{2}+2}\sigma_{2} \cdots \sigma_{2}\sigma_{1}^{n_{2m-1}+2}\sigma_{2}\sigma_{1}^{n_{2m}+2}\sigma_{2}\sigma_{1}^{n_{2m+1}+1}
\end{eqnarray*}

\noindent Applying the Schreier Normal Form Algorithm to type (3b), we get 
\begin{eqnarray*}
P_{i}N_{i} & = & \sigma_{1}^{p}\sigma_{2}^{n_{1}}\sigma_{1}^{n_{2}}\cdots\sigma_{2}^{n_{2m-1}}\sigma_{1}^{n_{2m}}\sigma_{2}^{n_{2m+1}}\\
\ & = & (y^{2}x)^{p}(yx)^{-n_{1}}(xy)^{-n_{2}} \cdots (yx)^{-n_{2m-1}}(xy)^{-n_{2m}}(yx)^{-n_{2m+1}}\\
\ & = & y^{2}(xy^{2})^{p-1}(xy)^{-n_{1}}(x^{2})y(xy)^{-n_{2}-2}(xy^{2}) \cdots\\
\ & \ & \cdots(xy^{2})(xy)^{-n_{2m-1}-1}(x^{2})y(xy)^{-n_{2m}-2}(xy^{2})(xy)^{-n_{2m+1}-1}x\\
\ & = & y^{2}(xy^{2})^{p-1}(xy)^{-n_{1}}(C^{-1})y(xy)^{-n_{2}-2}(xy^{2}) \cdots\\
\ & \ & \cdots(xy^{2})(xy)^{-n_{2m-1}-1}(C^{-1})y(xy)^{-n_{2m}-2}(xy^{2})(xy)^{-n_{2m+1}-1}x\\
\ & \cong & (C^{-1})^{m}y^{2}(xy^{2})^{p-1}(xy)^{-n_{1}-1}(xy^{2})(xy)^{-n_{2}-2}(xy^{2}) \cdots\\
\ & \ & \cdots(xy^{2})(xy)^{-n_{2m-1}-2}(xy^{2})(xy)^{-n_{2m}-2}(xy^{2})(xy)^{-n_{2m+1}-1}x\\
\ & = & (C^{-1})^{m}y^{2}\sigma_{2}^{p-1}\sigma_{1}^{n_{1}+1}\sigma_{2}\sigma_{1}^{n_{2}+2}\sigma_{2} \cdots \sigma_{2}\sigma_{1}^{n_{2m-1}+2}\sigma_{2}\sigma_{1}^{n_{2m}+2}\sigma_{2}\sigma_{1}^{n_{2m+1}+1}x
\end{eqnarray*}

By inspecting the cases above, it can be seen that cyclic permutation is never used. Let $k_{i}$ denote the exponent of $C$ in the application of the Schreier Normal Form Algorithm to the subword $P_{i}N_{i}$. In each case above, it can be see that
$$k_{i}=-\# \left\{\text{induced\ products}\ \sigma_{2}^{n_{j,i}}\sigma_{1}^{n_{j+1,i}}\ \text{of\ negative\ syllables\ of}\ P_{i}N_{i},\ \text{where}\ n_{j,i}, n_{j+1,i} \leq -3 \right\},$$
which is a local version of conclusion (1) from Theorem~\ref{normformthm}. 

\end{proof}

The fact that cyclic permutation is never used in applying the Schreier Normal Form Algorithm to the subwords $P_{i}N_{i}$ is very important because the goal is to juxtapose the resulting braid words from Lemma~\ref{normlist} and claim that this gives, after some minor modifications, the normal form of the full braid word $\beta$. We are now ready to combine the above results to prove Theorem~\ref{normformthm} for the case that $\beta \in B_{3}$ is not a negative braid. The following lemma establishes that $\beta$ is generic and establishes the result for the parameter $k$ from the Schreier normal form (conclusion (1) from Theorem~\ref{normformthm}).  

\begin{lemma} \label{klemma} Let $D(K)=\widehat{\beta}$, where $\beta \in B_{3}$ is a nonnegative braid that satisfies the assumptions of Lemma~\ref{setup}. Then $\beta$ is generic and the parameter $k$ from the Schreier normal form $\beta'$ can be expressed in terms of the original 3-braid $\beta$ as 
$$k=-\# \left\{\text{induced\ products}\ \sigma_{2}^{n_{2}}\sigma_{1}^{n_{1}}\ \text{of\ negative\ syllables\ of}\ \beta,\ \text{where}\ n_{1}, n_{2} \leq -3 \right\}.$$
\end{lemma}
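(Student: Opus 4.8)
The plan is to reduce the global statement about $\beta$ to the local statements about the subwords $P_i N_i$ that were established in Lemma~\ref{normlist}, using the fact that cyclic permutation is never needed when applying the Schreier Normal Form Algorithm to each $P_i N_i$. First I would write $\beta = P_1 N_1 \cdots P_t N_t$ using the decomposition guaranteed by Conditions (2a), (2b), (2c) of Lemma~\ref{setup}, and substitute, for each $i$, the corresponding output word from Lemma~\ref{normlist}. Each such output has the shape $(C^{-1})^{k_i'} W_i$, where $W_i$ is a positive word in $\sigma_1, \sigma_2$ possibly bracketed by a leading $y^2$ and/or a trailing $x$; since $C \in Z(B_3)$, I would slide all the central factors $(C^{-1})^{k_i'}$ to the front, collecting a total power $C^{-\sum_i k_i'}$, leaving a positive word $W = W_1 \cdots W_t$ (up to these boundary $x$'s and $y^2$'s) in the middle.

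The key bookkeeping step is to track how the boundary symbols $x$ and $y^2$ interact across the junctions $W_i W_{i+1}$. Here I would use the substitution dictionary $\sigma_1 = y^2 x$, $\sigma_2 = x y^2$, $\sigma_1^{-1} = xy$, $\sigma_2^{-1} = yx$, together with $x^{-2} = y^3 = C$: the cases (1a)--(3b) are arranged precisely so that the trailing $x$ of a $\sigma_1$-ending block (types ending with $\sigma_1^{n+1}$) and the leading $y^2$ of a $\sigma_1$-starting block (types 1b, 2b, 3b) combine with the braid letters they abut to resynchronize the word back into the $(xy)^{p}(xy^2)^{q}$-pattern of step~(2), possibly absorbing one more factor of $C^{-1}$ at each junction where an induced product $\sigma_2^{n_2}\sigma_1^{n_1}$ of negative syllables is created by the juxtaposition. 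I would do this junction analysis once in general (noting the finitely many combinations of "how block $W_i$ ends" with "how block $W_{i+1}$ starts" that can occur under the hypotheses of Lemma~\ref{setup}), conclude that the resulting central word $\eta$ is in the canonical form $(xy)^{p_1}(xy^2)^{q_1}\cdots (xy)^{p_s}(xy^2)^{q_s}$ with all $p_i, q_i \geq 1$, and hence that $\beta$ is generic by the uniqueness in step~(3) of the algorithm.

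Finally, for the value of $k$: the total exponent of $C^{-1}$ is $\sum_i k_i' + (\text{number of extra } C^{-1}\text{'s picked up at the junctions})$. By the local version of conclusion (1) proved in Lemma~\ref{normlist}, $k_i'$ counts exactly the induced products $\sigma_2^{n_2}\sigma_1^{n_1}$ of negative syllables lying inside $P_i N_i$ (with both exponents $\leq -3$), and each junction $N_i P_{i+1} N_{i+1}$ contributes exactly the induced product of negative syllables that straddles the boundary between $N_i$ and $N_{i+1}$ — there is exactly one such product per junction because $P_{i+1}$ is a single positive syllable cyclically neighbored by negative syllables, and after its removal (in the passage to $y^2\sigma_2^{p-1}\cdots$ form) the last negative syllable of $N_i$ becomes cyclically adjacent to the first negative syllable of $N_{i+1}$. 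Summing, $-k$ equals the total number of induced products $\sigma_2^{n_2}\sigma_1^{n_1}$ of negative syllables of the cyclic word $\beta$ with $n_1, n_2 \leq -3$, which is conclusion (1) of Theorem~\ref{normformthm}. The main obstacle I anticipate is the junction analysis: one must verify carefully that every pair "ending type of $W_i$" $\times$ "starting type of $W_{i+1}$" that is actually realizable resynchronizes cleanly into the canonical form and contributes precisely one $C^{-1}$ exactly when a genuine negative-syllable induced product is formed, with no stray $x$, $y$, or $y^2$ left over and no unexpected cancellation among the $p_i, q_i$ that would drop one below $1$ — this is routine but must be done for all the relevant cases.
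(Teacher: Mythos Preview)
Your overall architecture---apply Lemma~\ref{normlist} locally to each $P_iN_i$, slide the central factors to the front, and analyze the junctions---is exactly the paper's approach. However, your expectation of what the junction analysis yields is wrong, and this is not merely a cosmetic slip: you predict that each junction contributes an extra factor of $C^{-1}$ together with an extra induced product $\sigma_2^{n_2}\sigma_1^{n_1}$ ``straddling the boundary between $N_i$ and $N_{i+1}$.'' Neither of these occurs.

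For the induced products: the junction in $\beta$ between consecutive blocks is always (last syllable of $N_i$)$\cdot P_{i+1}$, i.e.\ a \emph{negative} syllable followed by a \emph{positive} syllable. The positive syllable $P_{i+1}$ is never ``removed''; even when $p=1$ in a type-(b) block, the leading $\mathbf{y^2}$ combines with the preceding $\mathbf{x}$ to give $\mathbf{xy^2}=\sigma_2$, restoring a genuine $\sigma_2^{1}$ syllable. So no cross-block negative--negative pair is ever created, and the global count of induced products $\sigma_2^{n_2}\sigma_1^{n_1}$ is exactly $\sum_i(-k_i)$ with no correction term.

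For the central factors: the structural point you are missing is that the cyclically-reduced condition forces a rigid rule on which block types can follow which. Blocks of types (1a), (2b), (3a) end in a negative $\sigma_1$-syllable and hence must be followed by a type-(a) block; blocks of types (1b), (2a), (3b) end in a negative $\sigma_2$-syllable and must be followed by a type-(b) block. Checking against the list in Lemma~\ref{normlist}, the trailing $\mathbf{x}$ appears in the normal form of $P_iN_i$ \emph{exactly} when the leading $\mathbf{y^2}$ appears in the normal form of $P_{i+1}N_{i+1}$. Thus at every junction either both boundary letters are present (and combine as $\mathbf{x}\cdot\mathbf{y^2}\sigma_2^{p-1}=\sigma_2\cdot\sigma_2^{p-1}=\sigma_2^{p}$) or neither is; in particular no new power of $C$ is created, and $k=\sum_i k_i$ on the nose. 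Once you see this, the formula for $k$ follows immediately. (As a side remark, the paper obtains genericity of $\beta$ by the shortcut ``$K$ hyperbolic $\Rightarrow$ $\beta$ generic'' via Proposition~\ref{hypgeneric}, rather than by reading it off the computed word as you propose.)
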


\begin{proof} By Lemma~\ref{setup}, we have that $K$ is hyperbolic. By Proposition~\ref{hypgeneric}, this implies that $\beta$ is generic. Let us now consider which types of induced subwords $P_{i+1}N_{i+1}$ can (cyclically) follow a given induced subword $P_{i}N_{i}$. Since $\beta$ is assumed to be cyclically reduced into syllables and since the the induced subwords $P_{i}N_{i}$ contain unbroken syllables of $\beta$, then (looking at the list of induced subwords $P_{i}N_{i}$ preceding Lemma~\ref{normlist}) we get that
\begin{itemize}
	\item the subwords $P_{i}N_{i}$ of type (1) and type (3) with subtype either (a) or (b) can be followed by subwords $P_{i+1}N_{i+1}$ of \textbf{the same} subtype (a) or (b), respectively. 
\item the subwords $P_{i}N_{i}$ of type (2) with subtype either (a) or (b) can be followed by subwords $P_{i+1}N_{i+1}$ of \textbf{different} subtype (b) or (a), respectively. 
\end{itemize}

It is important to note that the rules for juxtaposition given above also apply after the Schreier Normal Form Algorithm is applied to the list of induced subwords $P_{i}N_{i}$ preceding Lemma~\ref{normlist}. Abusing terminology slightly, call the braids words in the list given in Lemma~\ref{normlist} the \emph{normal forms of the $P_{i}N_{i}$}. 

Let $k_{i}$ denote the exponent of $C$ in the normal form of $P_{i}N_{i}$. Recall that $C \in Z(B_{3})$ commutes with the generators of $B_{3}$. Thus, when juxtaposing the normal form of $P_{i}N_{i}$ with the normal form of $P_{i+1}N_{i+1}$, we may move the factor $C^{k_{i+1}}$ out of the way, moving it from the beginning of the normal form of $P_{i+1}N_{i+1}$ to the beginning of the normal form of $P_{i}N_{i}$. This fact will be utilized below. 

Looking at the list of braids words given in Lemma~\ref{normlist}, notice that half of the braid words may potentially contain the variable $\mathbf{x}$ at the end of the word and half of the braid words may contain the expression $\mathbf{y^{2}}\sigma_{2}^{p-1}$ at the beginning of the word (immediately after the $C^{k_{i+1}}$ term that will be moved out of the way). We can now see that juxtaposing the normal form of $P_{i}N_{i}$ with that of $P_{i+1}N_{i+1}$ either
\begin{enumerate}
	\item[(1)] involves neither $\mathbf{x}$ at the end of the normal form of $P_{i}N_{i}$ nor $\mathbf{y^{2}}\sigma_{2}^{p-1}$ at the beginning of the normal form of $P_{i+1}N_{i+1}$, or
	\item[(2)] involves (after moving $C^{k_{i+1}}$ out of the way) both an $\mathbf{x}$ at the end of the normal form of $P_{i}N_{i}$ and $\mathbf{y^{2}}\sigma_{2}^{p-1}$ at the beginning of the normal form of $P_{i+1}N_{i+1}$.
\end{enumerate}
\noindent Consequently, upon juxtaposing the normal forms of all of the induced subwords together, we see that all factors $\mathbf{x}$ and $\mathbf{y^{2}}\sigma_{2}^{p-1}$ in the list of normal forms combine to form $$\mathbf{xy^{2}}\sigma_{2}^{p-1}=\boldsymbol{\sigma_{2}}\sigma_{2}^{p-1}=\sigma_{2}^{p}.$$ 
Note, in particular, that juxtaposing the normal form of $P_{i}N_{i}$ with that of $P_{i+1}N_{i+1}$ does \textbf{not} create any new nontrivial powers of $C$.  Therefore, since $C$ commutes with the generators of $B_{3}$, then we may group all $C^{k_{i}}$ terms together at the beginning of the normal form braid word. Furthermore, by applying the conclusions of Lemma~\ref{normlist}, we have that 
$$k_{i}=-\# \left\{\text{induced\ products}\ \sigma_{2}^{n_{j,i}}\sigma_{1}^{n_{j+1,i}}\ \text{of\ negative\ syllables\ of}\ P_{i}N_{i},\ \text{where}\ n_{j,i}, n_{j+1,i} \leq -3 \right\}.$$
We also have that juxtaposing $P_{i}N_{i}$ with $P_{i+1}N_{i+1}$ does \textbf{not} create any new induced products $\sigma_{2}^{n_{j}}\sigma_{1}^{n_{j+1}}$ of negative syllables. This is because juxtaposing $P_{i}N_{i}$ with $P_{i+1}N_{i+1}$ only joins a negative syllable with a positive syllable. With this information, we are now able to conclude that
\begin{eqnarray*}
k & = & \sum_{i=1}^{t} k_{i}\\
\ & = & \sum_{i=1}^{t} -\# \left\{\text{induced\ products}\ \sigma_{2}^{n_{j,i}}\sigma_{1}^{n_{j+1,i}}\ \text{of\ negative\ syllables\ of}\ P_{i}N_{i},\ \text{where}\ n_{j,i}, n_{j+1,i} \leq -3 \right\}\\
\ & = & -\# \left\{\text{induced\ products}\ \sigma_{2}^{n_{j}}\sigma_{1}^{n_{j+1}}\ \text{of\ negative\ syllables\ of}\ \beta,\ \text{where}\ n_{j}, n_{j+1} \leq -3 \right\}.
\end{eqnarray*}          

\end{proof}

The following lemma proves conclusion (2) from Theorem~\ref{normformthm}, the result concerning the parameter $s$ from the Schreier normal form, for the case that $\beta \in B_{3}$ is not a negative braid. 

\begin{lemma} Let $D(K)=\widehat{\beta}$, where $\beta \in B_{3}$ is a nonnegative braid that satisfies the assumptions of Lemma~\ref{setup}. Then the parameter $s$ from the Schreier normal form $\beta'$ can be expressed in terms of the original 3-braid $\beta$ as 
$$s=t^{-}(D)=\#\left\{\text{negative\ syllables\ in}\ \beta \right\}.$$
\end{lemma}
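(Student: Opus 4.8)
The plan is to mirror the structure of the negative-braid case and the proof of Lemma~\ref{klemma}, tracking the parameter $s$ instead of $k$ as we juxtapose the normal forms of the induced subwords $P_{i}N_{i}$. First I would recall from Lemma~\ref{klemma} that $\beta$ is generic, so its Schreier normal form is $\beta'=C^{k}\sigma_{1}^{-p_{1}}\sigma_{2}^{q_{1}}\cdots\sigma_{1}^{-p_{s}}\sigma_{2}^{q_{s}}$, and that $s$ is simply the number of $\sigma_{1}^{-p_{i}}\sigma_{2}^{q_{i}}$ blocks appearing in $\eta'$. So the entire task reduces to counting these blocks in the braid word obtained by juxtaposing the normal forms of the $P_{i}N_{i}$ listed in Lemma~\ref{normlist} and then performing the minor cleanup (moving all $C$'s to the front, combining the trailing $\mathbf{x}$ with the leading $\mathbf{y^{2}}\sigma_{2}^{p-1}$ into $\sigma_{2}^{p}$) described in the proof of Lemma~\ref{klemma}.

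The key step is to count, for each of the six subword types in Lemma~\ref{normlist}, how many blocks of the form (a negative power of $\sigma_{1}$ followed by a positive power of $\sigma_{2}$) occur in its normal form. Reading off the list: type (1a), after cleanup, contributes a $\sigma_{2}^{p}\sigma_{1}^{n_{1}}$ piece which, once glued to its neighbor on the left, yields exactly one block $\sigma_{1}^{n_{1}+\cdots}\sigma_{2}^{p+\cdots}$; types (2a), (2b) contribute the chain $\sigma_{1}^{n_{1}+1}\sigma_{2}\sigma_{1}^{n_{2}+2}\sigma_{2}\cdots\sigma_{2}\sigma_{1}^{n_{2m-1}+2}\sigma_{2}\sigma_{1}^{n_{2m}+1}$ which is $m$ blocks of $\sigma_{1}^{(\text{neg})}\sigma_{2}^{(\text{pos})}$ (after the trailing $\sigma_{1}^{n_{2m}+1}$ absorbs the next $\sigma_{2}^{p}$ coming from the following subword), matching the $m$ negative-$\sigma_{1}$ and $m$ negative-$\sigma_{2}$ syllables in $N_{i}$; and similarly types (3a), (3b) contribute $m+1$ blocks, again matching the $m+1$ negative-$\sigma_{1}$ and $m+1$ negative-$\sigma_{2}$... here I would be careful: each $N_{i}$ has an equal number of $\sigma_{1}$-syllables and $\sigma_{2}$-syllables or differs by one, and the point is that the total number of blocks contributed equals $\tfrac{1}{2}\#\{\text{negative syllables in }N_{i}\}$ summed appropriately. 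I would verify, type by type, that in each normal form the count of $\sigma_{1}^{(\text{neg})}\sigma_{2}^{(\text{pos})}$ blocks equals the number of negative syllables in $N_{i}$ of that particular generator, and that the juxtaposition rules from Lemma~\ref{klemma} (same subtype follows (1),(3); alternating subtype for (2)) guarantee these blocks concatenate without creating or destroying any blocks.

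Then I would sum over $i$: $s=\sum_{i=1}^{t}(\text{number of blocks from }P_{i}N_{i})$. Since $\beta$ is cyclically reduced into syllables and each $P_{i}N_{i}$ accounts for one positive syllable $P_{i}$ and the maximal negative subword $N_{i}$, and since no blocks are created or lost at the junctions (the junction only merges a trailing $\sigma_{1}^{(\text{neg})}$ with a following $\sigma_{2}^{(\text{pos})}$, which is within one block, or merges an $\mathbf{x}$ with a $\mathbf{y^2}\sigma_2^{p-1}$ to recreate a $\sigma_2^p$), the total number of blocks equals $\#\{\text{negative }\sigma_{1}\text{-syllables}\}=\#\{\text{negative }\sigma_{2}\text{-syllables}\}=\tfrac{1}{2}\#\{\text{negative syllables in }\beta\}$... but wait, that would give $s=\tfrac{1}{2}\#\{\text{negative syllables}\}$, not $\#\{\text{negative syllables}\}$. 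The correct bookkeeping is that each block $\sigma_{1}^{-p_{i}}\sigma_{2}^{q_{i}}$ consumes one negative $\sigma_1$-syllable and one negative $\sigma_2$-syllable, so the number of blocks is $\#\{\text{negative }\sigma_1\text{-syllables}\}$, and the total $\#\{\text{negative syllables in }\beta\}$ is the sum of negative $\sigma_1$- and negative $\sigma_2$-syllables. The resolution must be that in counting ``negative syllables'' for the statement $s=t^{-}(D)$, one pairs them up correctly: I would carefully recount using the explicit chains, expecting that $s$ equals the number of negative $\sigma_1$-syllables which, by the balancing forced by cyclic reduction and the structure of the $N_i$, equals $t^{-}(D)$ exactly as claimed — and I would double-check against the stated special case (an alternating 3-braid $\beta=\sigma_i^{p_1}\sigma_j^{n_1}\cdots\sigma_i^{p_l}\sigma_j^{n_l}$ has $s=l$). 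The main obstacle is precisely this careful bookkeeping at the junctions between consecutive $P_iN_i$ and making sure the ``$+2$'' versus ``$+1$'' exponent adjustments at the ends of each $N_i$-chain are matched up correctly with neighbors so that exactly the right number of blocks survives; once that is pinned down, the sum over $i$ and comparison with $t^{-}(D)$ and $|\beta_K'|$ (via Corollary~\ref{scor}) is immediate.
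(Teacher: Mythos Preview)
Your overall strategy---decompose $\beta=P_1N_1\cdots P_tN_t$, apply Lemma~\ref{normlist} to each piece, count blocks locally, then sum---is exactly what the paper does. But your block count is off, and the confusion you flag (``but wait, that would give $s=\tfrac12\#\{\text{negative syllables}\}$'') is a symptom of a real miscount, not a paradox to be resolved by ``balancing.''

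Look again at type~(2a). The chain in the normal form is
\[
\sigma_{1}^{n_{1}+1}\sigma_{2}\,\sigma_{1}^{n_{2}+2}\sigma_{2}\cdots\sigma_{2}\,\sigma_{1}^{n_{2m-1}+2}\sigma_{2}\,\sigma_{1}^{n_{2m}+1},
\]
which contains $2m$ negative $\sigma_1$-syllables, hence $2m$ blocks (the trailing $\sigma_{1}^{n_{2m}+1}$ pairs with the $\sigma_2^{p}$ beginning the next piece). You wrote $m$. The original $N_i$ has $2m$ negative syllables total ($m$ in $\sigma_1$ and $m$ in $\sigma_2$), so the correct local count is $s_i=2m=\#\{\text{negative syllables in }P_iN_i\}$, not $m$. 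The same recount fixes types~(3a),~(3b).

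The conceptual slip is conflating syllables of $\beta$ with syllables of the normal form. In the normal form $C^k\sigma_1^{-p_1}\sigma_2^{q_1}\cdots\sigma_1^{-p_s}\sigma_2^{q_s}$ each block uses one \emph{negative $\sigma_1$} and one \emph{positive $\sigma_2$}; neither corresponds one-to-one with a generator-specific syllable of the original $\beta$. The correct observation, visible directly in the list of Lemma~\ref{normlist}, is that the algorithm converts \emph{every} negative syllable of $\beta$ (whether originally in $\sigma_1$ or $\sigma_2$) into exactly one negative $\sigma_1$-syllable of the normal form. Hence $s_i=\#\{\text{negative syllables in }P_iN_i\}$ for each $i$, and summing gives $s=\sum_i s_i=\#\{\text{negative syllables in }\beta\}=t^{-}(D)$. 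Your claim that ``$s$ equals the number of negative $\sigma_1$-syllables'' of the original $\beta$ is false in general (e.g.\ the alternating case $\sigma_1^{p_1}\sigma_2^{n_1}\cdots\sigma_1^{p_l}\sigma_2^{n_l}$ has zero negative $\sigma_1$-syllables but $s=l$).
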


\begin{proof}
We need to relate the global parameter $s$ from the Schreier normal form to local versions of the parameter $s$. Let $s_{i}$ denote the local version of the parameter $s$, which comes from the normal form of the subword $P_{i}N_{i}$ and will be more precisely defined below.  

As seen in the proof of Lemma~\ref{klemma} above, juxtaposing the normal forms of the subwords $P_{i}N_{i}$ to create a braid word groups together the $\mathbf{x}$ and $\mathbf{y^{2}}$ factors in the normal forms in such a way that they are absorbed into $\sigma_{2}^{p-1}$ to form $\sigma_{2}^{p}$. Also recall that we can collect together all individual powers of $C$ from each $P_{i}N_{i}$ normal form and use commutativity to form a single power of $C$ at the beginning of the normal form braid word. Thus, after juxtaposition of the normal forms of the $P_{i}N_{i}$, what results is a braid word that looks like $C^{k}W_{1} \cdots W_{t}$, where $k \in \mathbb{Z}$ and $W_{i}$ is an alternating word that is positive in $\sigma_{2}$, negative in $\sigma_{1}$, begins with a $\sigma_{2}$ syllable, and ends with a $\sigma_{1}$ syllable. To see this, recall the list of $P_{i}N_{i}$ normal forms given in Lemma~\ref{normlist}. 

Given an alternating subword $W_{i}=\sigma_{2}^{p_{1,i}}\sigma_{1}^{n_{1,i}}\cdots\sigma_{2}^{p_{q,i}}\sigma_{1}^{n_{q,i}}$ as described above, we define $s_{i}=q$. To provide an example, for the normal form $C^{-1}\sigma_{2}^{p}\sigma_{1}^{n_{1}+1}\sigma_{2}\sigma_{1}^{n_{2}+2}\sigma_{2}\sigma_{1}^{n_{3}+2}\sigma_{2}\sigma_{1}^{n_{4}+1}\mathbf{x}$ of Type (2a), we have that $s_{i}=4$. Consider the product
$$W_{i}W_{i+1}=(\sigma_{2}^{p_{1,i}}[\sigma_{1}^{n_{1,i}}\cdots\sigma_{2}^{p_{q,i}}\sigma_{1}^{n_{q,i}})\cdot(\sigma_{2}^{p_{1,i+1}}]\sigma_{1}^{n_{1,i+1}}\cdots\sigma_{2}^{p_{r,i+1}}\sigma_{1}^{n_{r,i+1}}).$$ Note that the subword $[\sigma_{1}^{n_{1,i}}\cdots\sigma_{2}^{p_{q,i}}\sigma_{1}^{n_{q,i}}\cdot\sigma_{2}^{p_{1,i+1}}]$ looks like the alternating part of a generic braid word, the part of the normal form for which the parameter $s$ measures the length. From this perspective of (cyclically) borrowing the first syllable of the next subword, the local parameter $s_{i}$ makes sense as being the local version of the global parameter $s$. 

Recall that, since $\beta$ is cyclically reduced into syllables, then the number of negative twist regions in $D(K)=\widehat{\beta}$ corresponds to the number of negative syllables in $\beta$. Also, note that the decomposition of $\beta$ into induced subwords $P_{i}N_{i}$ (with unbroken syllables) gives that the number of negative syllables in $\beta$ is the sum of the numbers of negative syllables in the $P_{i}N_{i}$. Furthermore, returning to Lemma~\ref{normlist} and its proof, it can be seen that the number of negative syllables in the subword $P_{i}N_{i}$ is equal to the local parameter $s_{i}$ from the normal form of $P_{i}N_{i}$. Finally, since juxtaposing the normal forms of the $P_{i}N_{i}$ gives a braid word $C^{k}W_{1} \cdots W_{t}$ that (by cyclic permutation and the commutativity of $C$) is equivalent to the generic normal form of $\beta$, then the parameters $s_{i}$ from the $W_{i}$ sum to give the parameter $s$ from the Schreier normal form of $\beta$. With this information, we are now able to conclude that
\begin{eqnarray*}
t^{-}(D) & = & \#\left\{\text{negative\ syllables\ in\ }\beta\right\}\\
\ & = & \sum_{i=1}^{t} \#\left\{\text{negative\ syllables\ in\ }P_{i}N_{i}\right\}\\
\ & = & \sum_{i=1}^{t} s_{i}\\
\ & = & s.
\end{eqnarray*}  

\end{proof}

\bibliography{mybib}

\begin{thebibliography}{10}

\bibitem{AgolStorm}
Ian Agol, Peter~A. Storm, and William~P. Thurston.
\newblock Lower bounds on volumes of hyperbolic {H}aken 3-manifolds.
\newblock {\em J. Amer. Math. Soc.}, 20(4):1053--1077, 2007.
\newblock With an appendix by Nathan Dunfield.

\bibitem{HeadTail}
Oliver~T. Dasbach and Xiao-Song Lin.
\newblock On the head and the tail of the colored {J}ones polynomial.
\newblock {\em Compos. Math.}, 142(5):1332--1342, 2006.

\bibitem{Volumish}
Oliver~T. Dasbach and Xiao-Song Lin.
\newblock A volumish theorem for the {J}ones polynomial of alternating knots.
\newblock {\em Pacific J. Math.}, 231(2):279--291, 2007.

\bibitem{Refined}
Oliver~T. Dasbach and Anastasiia Tsvietkova.
\newblock A refined upper bound for the hyperbolic volume of alternating links
  and the colored jones polynomial.
\newblock To appear in Mathematical Research Letters. ArXiv:1310.0788v5.

\bibitem{Montesinos}
Kathleen Finlinson and Jessica~S. Purcell.
\newblock Volumes of montesinos links.
\newblock ArXiv:1502.07991v1.

\bibitem{New}
David Futer, Efstratia Kalfagianni, and Jessica Purcell.
\newblock Hyperbolic semi-adequate links.
\newblock Arxiv:1311.3008v1.

\bibitem{Survey}
David Futer, Efstratia Kalfagianni, and Jessica Purcell.
\newblock Jones polynomials, volume, and essential knot surfaces: a survey.
\newblock Proceedings of Knots in Poland III, Banach Center Publications, 100
  (2014), 51-77. ArXiv:1110.6388v2.

\bibitem{Guts}
David Futer, Efstratia Kalfagianni, and Jessica Purcell.
\newblock {\em Guts of surfaces and the colored {J}ones polynomial}, volume
  2069 of {\em Lecture Notes in Mathematics}.
\newblock Springer, Heidelberg, 2013.

\bibitem{Filling}
David Futer, Efstratia Kalfagianni, and Jessica~S. Purcell.
\newblock Dehn filling, volume, and the {J}ones polynomial.
\newblock {\em J. Differential Geom.}, 78(3):429--464, 2008.

\bibitem{Symmetric}
David Futer, Efstratia Kalfagianni, and Jessica~S. Purcell.
\newblock Symmetric links and {C}onway sums: volume and {J}ones polynomial.
\newblock {\em Math. Res. Lett.}, 16(2):233--253, 2009.

\bibitem{Cusp}
David Futer, Efstratia Kalfagianni, and Jessica~S. Purcell.
\newblock Cusp areas of {F}arey manifolds and applications to knot theory.
\newblock {\em Int. Math. Res. Not. IMRN}, 2010(23):4434--4497, 2010.

\bibitem{Me}
Adam Giambrone.
\newblock Combinatorics of link diagrams and volume.
\newblock {\em Journal of Knot Theory and Its Ramifications}, 24(01):1550001,
  2015.

\bibitem{Lackenby}
Marc Lackenby.
\newblock The volume of hyperbolic alternating link complements.
\newblock {\em Proc. London Math. Soc. (3)}, 88(1):204--224, 2004.
\newblock With an appendix by Ian Agol and Dylan Thurston.

\bibitem{Width}
Adam Lowrance.
\newblock The khovanov width of twisted links and closed 3-braids.
\newblock {\em Math. Helv.}, 86(3):675--706, 2011.

\bibitem{Schreier}
Otto Schreier.
\newblock \"{U}ber die gruppen {$A\sp aB\sp b=1$}.
\newblock {\em Abh. Math. Sem. Univ. Hamburg}, 3(1):167--169, 1924.

\bibitem{Stoimenow}
Alexander Stoimenow.
\newblock Coefficients and non-triviality of the {J}ones polynomial.
\newblock {\em J. Reine Angew. Math.}, 657:1--55, 2011.

\end{thebibliography}
\bibliographystyle{plain}

\end{document}